\newtheorem{theorem}{\sc Theorem}[section]
\newtheorem{lemma}[theorem]{\sc Lemma}
\newtheorem{proposition}[theorem]{\sc Proposition}
\newtheorem{remark}[theorem]{\sc Remark}
\begin{document}

\title[Varieties and the problem on conciseness of words]{Varieties of groups and the problem on conciseness of words}

\author{Cristina Acciarri}

\address{Cristina Acciarri: Dipartimento di Scienze Fisiche, Informatiche e Matematiche, Universit\`a degli Studi di Modena e Reggio Emilia, Via Campi 213/b, I-41125 Modena, Italy}
\email{cristina.acciarri@unimore.it}

\author{Pavel Shumyatsky }
\address{ Pavel Shumyatsky: Department of Mathematics, University of Brasilia,
Brasilia-DF, 70910-900 Brazil}
\email{pavel@unb.br}

\thanks{Part of this work was done while the second author was visiting the Department of Mathematics of the University of the Basque Country. He expresses his sincere gratitude to the department for the excellent hospitality.  The first author is member of ``National Group for Algebraic and Geometric Structures, and Their Applications'' (GNSAGA–INdAM). The second author was partially supported by CNPq. The authors thank the referee for their helpful comments leading to improvements of an earlier version of the paper.}

\keywords{Residually finite groups, concise words, Lie methods, varieties of groups}
\subjclass[2020]{20E26, 20F10, 20F40, 20E10, 20F45}

\begin{abstract} 
A group-word $w$ is concise in a class of groups $\mathcal X$ if and only if the verbal subgroup $w(G)$ is finite whenever $w$ takes only finitely many values in a group $G\in \mathcal X$. It is a long-standing open problem whether every word is concise in residually finite groups. In this paper we observe that the conciseness of a word $w$ in residually finite groups is equivalent to that in the class of virtually pro-$p$ groups.

This is used to show that if $q,n$ are positive integers and $w$ is a multilinear commutator word, then the words $w^q$ and $[w^q,_{n} y]$ are concise in residually finite groups. Earlier this was known only in the case where $q$ is a prime power.

In the course of the proof we establish that certain classes of groups satisfying the law $w^q\equiv1$, or $[\delta_k^q,{}_n\, y]\equiv1$, are varieties.

\end{abstract}

\maketitle

\section{Introduction}

Let $w=w(x_1,\dots,x_k)$ be a group-word. Given a group $G$, we denote by $w(G)$ the verbal subgroup corresponding to the word $w$, that is, the subgroup generated by the set of all values $w(g_1,\ldots,g_k)$, where $g_1,\ldots,g_k$ are elements of $G$. The word $w$ is concise if $w(G)$ is finite whenever the set of $w$-values in $G$ is finite. More generally, the word $w$ is called concise in a class of groups $\mathcal X$ if $w(G)$ is finite whenever the set of $w$-values in $G$ is finite for a group $G\in\mathcal X$. In the sixties Hall raised the problem whether all  words are concise, but in 1989  Ivanov \cite{ivanov} (see also \cite[p.\ 439]{ols}) solved the problem in the negative. On the other hand, the problem for residually finite groups remains open. In recent years several new positive results with respect to this problem were obtained (see \cite{AS14,GS2015,dms-19,DMS2019,DMS2020}).

Recall that  multilinear commutator words, also known under the name of outer commutator words, are precisely the words that can be written in the form of multilinear Lie monomials. Important examples of  multilinear commutator words are provided by the lower central words  $\gamma_{k}$ on $k$ variables  defined inductively by the formulae
\[
\gamma_1=x_1,
\qquad
\gamma_k=[\gamma_{k-1}(x_1,\ldots,x_{k-1}),x_k]=[x_1,\ldots,x_k],
\quad
\text{for $k\ge 2$.}
\]
The corresponding verbal subgroups $\gamma_k(G)$  are  the terms of the lower central series of $G$. 
Another distinguished series of multilinear commutator words are the derived words $\delta_k$ on $2^k$  variables defined by the formulae
\[
\delta_0=x_1,
\quad
\delta_k=[\delta_{k-1}(x_1,\ldots,x_{2^{k-1}}),\delta_{k-1}(x_{2^{k-1}+1},\ldots,x_{2^{k}})]
\quad
\text{for $k\geq 1 $.}
\]
 The corresponding verbal subgroups  $\delta_k(G)$ are  the  terms $G^{(k)}$ of the derived series of $G$. 

The word $[x,{}_{n}\, y]$ is defined inductively by $[x,_0y]=x$ and $[x,{}_{n}\,y]=[[x,{}_{n-1}\,y],y]$ for $n\geq1$.

Recall that a group has a property virtually if it has a subgroup of finite index with that property.
In the present paper we observe that conciseness of a word $w$ in residually finite groups is equivalent to that in the class of virtually pro-$p$ groups (see Proposition \ref{start}). This enables us to establish the following theorems.

\begin{theorem}\label{main1}
Suppose that $w$ is a multilinear commutator word. For any integer $q\geq 1$ the word $w^q$ is concise in residually finite groups.
\end{theorem}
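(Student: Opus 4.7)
The plan is to combine three ingredients: the reduction to virtually pro-$p$ groups provided by Proposition~\ref{start}, the known prime-power case, and the variety result announced in the abstract.

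By Proposition~\ref{start}, fix a prime $p$ and a virtually pro-$p$ group $G$ in which the set $T$ of $w^q$-values is finite, say $|T|=m$; let $N=w^q(G)=\langle T\rangle$. A standard centralizer argument disposes of the commutator subgroup of $N$: since $T$ is $G$-invariant of size $m$, the centralizer $C=C_G(T)$ has index at most $m!$ in $G$; since $[N,C]=1$ and $N\cap C\leq Z(N)$, the index $|N:Z(N)|$ is finite, and Schur's theorem yields $|[N,N]|<\infty$. Passing to $G/[N,N]$, we may assume that $N$ is a finitely generated abelian $G$-invariant subgroup, so the proof reduces to showing that $N$ has finite exponent.

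Write $q=p^{a}\ell$ with $\gcd(\ell,p)=1$ and let $P$ be an open normal pro-$p$ subgroup of $G$. In $P$ the map $x\mapsto x^\ell$ is a homeomorphism (since $\ell$ is a unit in $\mathbb{Z}_p$), so on $k$-tuples from $P$ the sets of $w^{p^{a}}$-values and $w^q$-values are in bijection via $\ell$-th powers; in particular $w^{p^{a}}$ has only finitely many values on $P^{k}$. The previously known prime-power case \cite{DMS2019} then gives that $w^{p^{a}}(P)$ is finite, and after factoring it out we may assume $w^{p^{a}}\equiv 1$ on $P$. To upgrade this local information on $P$ to global information on $G$ we invoke the variety theorem: since the class of virtually pro-$p$ groups satisfying $w^q\equiv 1$ will be shown to be a variety, the quotient $G/N$—which satisfies $w^q\equiv 1$—obeys additional uniform laws (notably a bound on the exponent of an appropriate verbal subgroup), and these laws force $N$ to have finite exponent. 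A finitely generated abelian group of finite exponent being finite, this concludes the argument.

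The principal obstacle is the variety theorem itself. The group-theoretic reductions above and the use of $\ell$-th-root bijectivity are formal, and once the prime-power result is granted, all the action takes place inside the pro-$p$ subgroup $P$; what is genuinely new is the uniform control across the full virtually pro-$p$ group that the variety statement provides. Proving that the class of virtually pro-$p$ groups satisfying $w^q\equiv 1$ is closed under Cartesian products—so as to qualify as a variety—will be the technical heart of the paper, relying on Zelmanov-type Lie-theoretic input and the combinatorics of multilinear commutator words.
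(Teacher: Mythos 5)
Your opening moves match the paper: the reduction via Proposition~\ref{start} to a virtually pro-$p$ group $G$, and the Schur-type centralizer argument making $v(G)=w^q(G)$ abelian (this is exactly the content of Lemma~\ref{r_abelian}), after which one only needs periodicity/finite exponent of the finitely generated abelian group $v(G)$. From that point on, however, the argument has a genuine gap. First, the variety you invoke does not exist and is not what the paper proves: the class of virtually pro-$p$ groups satisfying $w^q\equiv 1$ cannot be a variety (it is not even closed under abstract subgroups, e.g.\ $\mathbb{Z}\leq\mathbb{Z}_p$), and Theorem~\ref{variety1} concerns instead the class $\frak X(j,q,w)$ of groups satisfying $w^q\equiv1$ \emph{together with} local finiteness of $w(G)$ and a Fitting height bound $j$ on locally soluble subgroups of $w(G)$. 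Moreover, the variety is not applied to $G/N$: the paper applies it to an open normal subgroup $H$ of $G$ containing no nontrivial $w^q$-values, whose continuous finite images lie in $\frak X$ (the Fitting height bound $j$ coming from the virtually pro-$p$ structure), so that $H$, embedded in a Cartesian product of finite $\frak X$-groups, lies in $\frak X$ as an abstract group.

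Second, your concluding step has no mechanism: laws holding in the quotient $G/N$ cannot by themselves force the subgroup $N$ to have finite exponent, and membership in a variety does not provide ``a bound on the exponent of an appropriate verbal subgroup.'' The actual derivation of periodicity in the paper is: $H\in\frak X$ gives that $w(H)$ is locally finite; since every $\delta_k$-value is a $w$-value (Lemma~\ref{shum2000_lemma41}), $H/w(H)$ is soluble, so $G/w(H)$ is virtually soluble; Lemma~\ref{AS_cor} then shows $w(G/w(H))$ has finite exponent, whence $v(G)$ is periodic, and (after the reduction to the torsion-free case) $v(G)=1$. Your detour through writing $q=p^a\ell$, the $\ell$-th power bijection on $P$, and the known prime-power case is correct as far as it goes, but it is never used afterwards and does not connect to the conclusion: making $w^{p^a}$ trivial on the open pro-$p$ subgroup $P$ says nothing about $w^q$-values on tuples not lying in $P$, which is precisely the ``upgrade from $P$ to $G$'' you acknowledge but do not supply. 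That upgrade is exactly where the paper's variety machinery (applied to $H$, not to $G/N$) does the work.
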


 \begin{theorem}\label{main2}
Suppose that $w=w(x_1,\ldots,x_k)$ is a multilinear commutator word. For any integers $q\geq 1$ and $n\geq 0$ the word $[w^q,{}_n\,y]$ is concise in residually finite groups.
\end{theorem}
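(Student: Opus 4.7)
The plan is to combine Proposition~\ref{start} with Theorem~\ref{main1} and induction on $n$.

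By Proposition~\ref{start}, it is enough to prove conciseness of $v=[w^q,{}_n\,y]$ in the class of virtually pro-$p$ groups. So let $G$ be virtually pro-$p$ and let $T$ be the finite set of $v$-values in $G$. Since the set of $v$-values is closed under conjugation, $T$ is $G$-invariant, so $v(G)=\langle T\rangle$ is a finitely generated normal subgroup of $G$, and the centralizer $C_G(T)$ has finite index in $G$ and centralizes $v(G)$.

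I would argue by induction on $n$. The base case $n=0$ is exactly Theorem~\ref{main1}. For the inductive step, pass to $\bar G=G/v(G)$, which satisfies the law $[w^q,{}_n\,y]\equiv 1$. The central technical assertion foreshadowed in the abstract is that the class of groups obeying such a law (or a suitably refined version involving $\delta_k^q$) is in fact a variety with a tightly controlled structure: one expects the verbal subgroup $w^q(\bar G)$ to be nilpotent-by-finite with parameters depending only on $w$, $q$, and $n$, so that the $w^q$-values in $\bar G$ lie in a finite union of cosets of a nilpotent normal subgroup of bounded class.

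Pulling this information back to $G$ together with the finiteness of $T$, I would deduce that the $w^q$-values of $G$ form a set which is finite modulo a normal subgroup on which $[\,\cdot\,,{}_{n-1}\,y]$ is controlled. The inductive hypothesis for smaller $n$, together with Theorem~\ref{main1} applied to $w^q$, would then give finiteness of a verbal subgroup containing $v(G)$; a final bookkeeping step, tracking $[\,\cdot\,,{}_n\,y]$ through this finite section, finishes the proof.

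The main obstacle, and almost certainly the technical heart of the paper, is the variety-theoretic structural statement itself: proving that groups satisfying $[w^q,{}_n\,y]\equiv 1$ (and its multilinear-commutator analogues) form a variety whose $w^q$-subgroup is nilpotent-by-finite in a uniform way. This step requires the full Lie-theoretic arsenal --- Lazard's correspondence, the Zassenhaus filtration, and Zelmanov's solution of the Restricted Burnside Problem --- and, for general (non-prime-power) $q$, a method to coherently assemble information coming from each prime dividing $q$. It is precisely the varietal formulation that enables the passage from prime-power $q$, where the result was previously known, to arbitrary $q$.
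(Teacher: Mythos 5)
Your reduction to virtually pro-$p$ groups via Proposition~\ref{start} matches the paper, but the heart of your argument rests on a structural claim that is not available and is in fact much stronger than anything the paper proves. You assert that the class of groups satisfying the law $[w^q,{}_n\,y]\equiv 1$ is a variety in which $w^q(\bar G)$ is nilpotent-by-finite with parameters depending only on $w,q,n$. That a class defined by a law is a variety is trivial; the uniform nilpotent-by-finite structure is false in general (already for $w=\delta_0$, $q=1$ the law defines $n$-Engel groups, which for $n\geq 3$ need not be nilpotent-by-finite, and for $n\geq 5$ are not even known to be locally nilpotent). The paper's variety theorems (Theorems~\ref{variety1} and~\ref{variety2}) deliberately carry extra hypotheses: the verbal subgroup must be locally finite, respectively (locally nilpotent)-by-(locally finite), and locally soluble subgroups must have bounded Fitting height; the authors even remark that dropping hypothesis $(3)$ is an open question. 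The actual content is the closure of this \emph{restricted} class under Cartesian products, which is what lets one transfer structure from the continuous finite images of an open subgroup $H$ (chosen to avoid nontrivial $v$-values, with the Fitting-height bound coming from the virtually pro-$p$ hypothesis) back to $H$ as an abstract group. Your unrestricted structural claim bypasses exactly the difficulty the paper is built to overcome.

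The induction on $n$ also has no working mechanism as stated. Passing to $\bar G=G/v(G)$ destroys residual finiteness, so conciseness results cannot be applied there; and ``Theorem~\ref{main1} applied to $w^q$'' is not available, because under the hypothesis that $[w^q,{}_n\,y]$ has finitely many values the set of $w^q$-values need not be finite (nor is $w^q$ itself of the form covered by Theorem~\ref{main1}). The ``pulling back'' and ``bookkeeping'' steps you defer are precisely where the paper does its real work: reducing to $v(G)$ abelian and torsion-free, proving $H\in\frak Y(j,k,n,q)$, establishing local finiteness of $w(H)$ modulo $w^q(H)$ via Lemma~\ref{basis} together with a locally graded argument, invoking the virtually soluble case (Proposition~\ref{ggd_pro54}), and finishing with the Engel-type analysis through Casolo's lemma, Schur's theorem and Lemmas~\ref{dms_lemma3} and~\ref{dms_lemma4} to force $v(G)=1$. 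None of this is recoverable from your outline, so the proposal has a genuine gap rather than an alternative route.
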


The particular cases of the above theorems in which $q$ is a prime power were previously established in \cite{AS14} and \cite{DMS2020}, respectively. The proofs given in this paper are based on new results on varieties of groups.

A variety is a class of groups defined by equations. More precisely, if $W$ is a set of words, the class of all groups satisfying the laws $W\equiv 1$ is called the variety determined by $W$. By a well-known theorem of Birkhoff \cite[2.3.5]{Rob}, varieties are precisely classes of groups closed with respect to taking subgroups, quotients and Cartesian products of their members. Some interesting varieties of groups have been discovered in the context of the 
restricted Burnside problem, solved in the affirmative by Zelmanov \cite{RBP1,RBP2}. In particular, we now know that the class of locally finite groups satisfying the law $x^q\equiv1$ is a variety.

Recall that a group is said to locally have some property if all its finitely generated subgroups have that property. A number of varieties of (locally nilpotent)-by-soluble groups were presented in \cite{shum2002, shu1,STT1}. Here we will prove the following results.

\begin{theorem}\label{variety1} For positive integers $j,q$ and a multilinear commutator word $w$, let $\frak X= \frak X(j,q,w)$ be the class of all groups $G$ such that\begin{enumerate}
\item  $G$ satisfies the law $w^q\equiv1$;
\item  The verbal subgroup $w(G)$ is locally finite;
\item  Locally soluble subgroups of $w(G)$ have Fitting height at most~$j$. \end{enumerate}
Then $\frak X$ is a variety.
\end{theorem}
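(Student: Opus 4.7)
The plan is to invoke Birkhoff's theorem and verify that $\mathfrak{X}$ is closed under subgroups, quotients, and Cartesian products. Closure under subgroups is essentially automatic: if $H \leq G \in \mathfrak{X}$, then $H$ inherits the law $w^q \equiv 1$, and from $w(H) \leq w(G)$ one reads off that $w(H)$ is locally finite and that every locally soluble subgroup of $w(H)$, viewed inside $w(G)$, has Fitting height at most $j$.

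For closure under quotients, let $N \trianglelefteq G \in \mathfrak{X}$. Conditions (1) and (2) pass to $G/N$ immediately since $w(G/N) = w(G)N/N$ is a homomorphic image of $w(G)$. The subtle point is (3): given a finitely generated (hence finite) soluble subgroup $\bar{K}_0 \leq w(G/N)$, I would lift its generators to $w(G)$ and let $H_0$ be the finite subgroup of $w(G)$ they generate, so $\bar{K}_0 \cong H_0/(H_0 \cap N)$. A Frattini argument produces a soluble lift: taking $K_0 \leq H_0$ minimal with $K_0(H_0 \cap N) = H_0$, the minimality forces $K_0 \cap N \leq \Phi(K_0)$, whence $K_0/\Phi(K_0)$ is a soluble quotient of $\bar{K}_0$ and, since $\Phi(K_0)$ is nilpotent, $K_0$ itself is soluble. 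Condition (3) applied in $G$ then yields that $K_0$, and hence its quotient $\bar{K}_0$, has Fitting height at most $j$.

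Closure under Cartesian products is the main step. Let $G = \prod_i G_i$ with $G_i \in \mathfrak{X}$; the law $w^q \equiv 1$ passes automatically, and $w(G) \leq \prod_i w(G_i)$ since the $i$-th coordinate of a $w$-value in $G$ is a $w$-value in $G_i$. The crucial structural input, and the main obstacle, is to show that conditions (2) and (3) on the $G_i$ force $w(G_i)$ to lie in a Burnside variety $\mathcal{B}(Q)$ for some $Q = Q(q,j)$ depending only on $q$ and $j$; this is where Zelmanov's positive solution of the Restricted Burnside Problem combines with the Lie-theoretic methods of \cite{shum2002,shu1,STT1}. Granted the exponent bound, $\prod_i w(G_i) \in \mathcal{B}(Q)$ is locally finite, so $w(G)$ is locally finite, proving~(2). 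For~(3), a finite soluble subgroup $S_0$ of a locally soluble subgroup of $w(G)$ projects to finite soluble groups $\pi_i(S_0) \leq w(G_i)$ of Fitting height $\leq j$; inside $\mathcal{B}(Q)$ the Fitting subgroups of finite subgroups of bounded order have uniformly bounded nilpotency class, so one proves by induction that $F_k\bigl(\prod_i \pi_i(S_0)\bigr) = \prod_i F_k(\pi_i(S_0))$ for every $k$. Consequently $\prod_i \pi_i(S_0)$ has Fitting height $\leq j$, and so does its subgroup $S_0$, completing~(3).
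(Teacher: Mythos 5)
Your closure under subgroups is fine, and your quotient argument (the minimal-supplement Frattini trick producing a soluble lift) is essentially the paper's own route via Lemma \ref{frattini_finito} and Lemma \ref{classY}(i). But the Cartesian-product step, which you correctly identify as the heart of the matter, rests on an unproven claim: that conditions (1)--(3) force $w(G_i)$ to have exponent bounded by some $Q(q,j)$. You flag this as ``the main obstacle'' and then dispose of it by gesturing at Zelmanov and the Lie-theoretic literature, but no such exponent bound is proved in the paper, nor is it available off the shelf; whether the exponent of $w(G)$ is bounded under the law $w^q\equiv 1$ (even with strong structural restrictions) is exactly the kind of question these papers work hard to avoid. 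Compounding this, your inference ``$\prod_i w(G_i)\in\mathcal{B}(Q)$ is locally finite'' is false as stated: Burnside varieties are not locally finite (Novikov--Adian, Ivanov). What Zelmanov gives is that the \emph{locally finite} groups of exponent dividing $Q$ form a variety, so even granting your exponent bound you would still need to phrase the deduction through that variety rather than through $\mathcal{B}(Q)$.

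The paper takes a genuinely different and weaker-hypothesis route precisely to sidestep any exponent bound. Its key tool is Lemma \ref{basis}: a finite $m$-generator group generated by a normal commutator-closed set of elements of order dividing $q$, satisfying a law and of bounded generalized Fitting height, has $(h,m,q,s,u)$-bounded \emph{order}. This is first applied to the case $w=\delta_k$ (Proposition \ref{delta_case}): a subgroup $H$ of the Cartesian product generated by finitely many $\delta_k$-values is residually finite, its projections are finite of order bounded independently of the factor, and since a finitely generated group has only finitely many normal subgroups of each index, $H$ itself is finite. The general multilinear commutator case is then reduced to $\delta_t$ via Lemmas \ref{shum2000_lemma41} and \ref{shum2000_lemma42}, and condition (3) for the product is handled by Proposition \ref{cartesian_w} together with Lemma \ref{loc_finite_finit_generat}. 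That last lemma also exposes a secondary gap in your write-up: in both the quotient and product steps you silently reduce condition (3) to \emph{finite} soluble subgroups, but passing from ``all finite soluble subgroups have Fitting height at most $j$'' to the same statement for all locally soluble subgroups is not automatic; the paper proves it (Lemma \ref{loc_finite_finit_generat}) using coprime commutators and Baer's theorem, and this is where the local finiteness of $w(G)$ is genuinely used. Your proposed identity $F_k\bigl(\prod_i \pi_i(S_0)\bigr)=\prod_i F_k(\pi_i(S_0))$ is also unnecessary and delicate for infinite index sets; since $S_0$ is finite it embeds subdirectly into finitely many of its projections, which gives $h(S_0)\le j$ directly.
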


\begin{theorem}\label{variety2} For any integers $j,q\geq 1$ and $k,n\geq 0$, let $\frak Y= \frak Y(j,k,n,q)$ be the class of all groups $G$ such that

\begin{enumerate}
\item  $G$ satisfies the law $[\delta_k^q,{}_n\,y]\equiv1$;
\item  The commutator subgroup $G^{(k)}$ is (locally nilpotent)-by-(locally finite);
\item  Locally soluble subgroups of $G^{(k)}$ have Fitting height at most $j$. \end{enumerate}
Then $\frak Y$ is a variety.
\end{theorem}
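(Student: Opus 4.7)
The plan is to invoke Birkhoff's theorem \cite[2.3.5]{Rob}, according to which $\frak Y=\frak Y(j,k,n,q)$ is a variety if and only if it is closed under subgroups, homomorphic images, and unrestricted Cartesian products. Closure under subgroups is routine: if $H\leq G\in\frak Y$, the law (1) passes to $H$; intersecting $H^{(k)}$ with a locally nilpotent normal subgroup $N\trianglelefteq G^{(k)}$ having locally finite quotient gives a locally nilpotent normal subgroup of $H^{(k)}$ whose quotient embeds in $G^{(k)}/N$, hence is locally finite, so (2) holds; and every locally soluble subgroup of $H^{(k)}$ is already one of $G^{(k)}$, giving (3). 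For a quotient $G/M$, (1) and (2) transfer directly because $(G/M)^{(k)}=G^{(k)}M/M$ is a homomorphic image of $G^{(k)}$. For (3), I would show that any finitely generated soluble subgroup $\bar H_0\leq (G/M)^{(k)}$ has Fitting height at most $j$ by lifting its Fitting series stepwise to $G^{(k)}$, producing at each step a locally soluble subgroup of $G^{(k)}$ that covers the relevant nilpotent factor, and applying the hypothesis (3) on $G$.

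The main obstacle is closure under Cartesian products. Let $\{G_\lambda\}_{\lambda\in\Lambda}\subset\frak Y$ and $G=\prod_\lambda G_\lambda$. The law in (1) holds factor-wise, hence in $G$. The trouble is that neither local finiteness, local nilpotence, nor bounded Fitting height is automatically preserved under arbitrary Cartesian products, because the nilpotency classes and the exponents of the pieces involved may vary unboundedly across $\lambda$. My strategy is to show that the parameters $j,k,n,q$ alone force a uniform structure: in every $G_\lambda$ one may choose a locally nilpotent normal subgroup $N_\lambda\trianglelefteq G_\lambda^{(k)}$ satisfying a fixed nilpotency identity $\gamma_c\equiv 1$, with quotient $G_\lambda^{(k)}/N_\lambda$ satisfying a fixed exponent identity $x^e\equiv 1$, where $c=c(j,k,n,q)$ and $e=e(j,k,n,q)$, and moreover the Fitting factors of locally soluble subgroups of $G_\lambda^{(k)}$ are nilpotent of bounded class. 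Once such uniform laws are in place, (2) for $G^{(k)}$ follows because nilpotent groups of bounded class form a variety and, by Zelmanov's solution of the restricted Burnside problem, locally finite groups of exponent $e$ form a variety \cite{RBP1,RBP2}; (3) follows by similar uniformity combined with local analysis of the projections to each factor.

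The hardest step is the establishment of these uniform identities. Their existence rests on the Engel flavour of the law $[\delta_k^q,{}_n\,y]\equiv 1$: the $q$-th powers of $\delta_k$-values are $n$-Engel elements of $G_\lambda$, so Zelmanov's Lie-algebraic machinery applied to the locally finite quotient of $G_\lambda^{(k)}$ should yield a bound on its exponent, while the bounded Fitting-height hypothesis (3), combined with standard properties of locally nilpotent groups generated by Engel elements, should control the nilpotency class of the locally nilpotent part. The proof of Theorem~\ref{variety1}, which treats the analogous situation for the simpler law $w^q\equiv 1$, should serve as a model, with the extra Engel commutators $[{\cdot},{}_n\,y]$ handled by known techniques for Engel-type identities in groups satisfying a power law.
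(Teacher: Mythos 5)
Your reduction to Birkhoff's theorem and the treatment of subgroups are fine, but the heart of your argument for Cartesian products rests on an unproved (and almost certainly unprovable by the means you indicate) claim: that the parameters $j,k,n,q$ alone force \emph{uniform} laws in every $G_\lambda\in\frak Y$, namely a locally nilpotent normal subgroup $N_\lambda\trianglelefteq G_\lambda^{(k)}$ of nilpotency class at most $c(j,k,n,q)$ with quotient of exponent at most $e(j,k,n,q)$. Nothing in the hypotheses bounds these quantities globally: the law $[\delta_k^q,{}_n\,y]\equiv 1$ imposes no torsion on $G^{(k)}/N$ beyond what condition (2) gives qualitatively, and condition (2) is purely local, so the exponent of the locally finite quotient and the class of the locally nilpotent part may vary with $\lambda$. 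Your appeal to ``Zelmanov's Lie-algebraic machinery'' is exactly where the quantitative content would have to come from, but the Lie-theoretic result actually available in this setting (Lemma \ref{basis}) only bounds the order of a \emph{finite} group in terms of its number of generators $m$ (together with $h^*$, $q$, $s$ and the law); it yields no bound independent of $m$, hence no global identity of the type $\gamma_{c+1}\equiv 1$ or $x^e\equiv 1$ on $N_\lambda$ or on $G_\lambda^{(k)}/N_\lambda$. Moreover, even granting such uniform bounds, your deduction of (2) for the product needs the subgroups $N_\lambda$ to be uniformly (e.g.\ verbally) defined so that they assemble coherently inside $\prod G_\lambda$, and your treatment of (3) for the product (``similar uniformity combined with local analysis of the projections'') is not an argument: bounded Fitting height of locally soluble subgroups of each factor does not pass to the product without additional work, since a locally soluble subgroup of $\prod G_\lambda^{(k)}$ is not controlled by finitely many projections.

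For comparison, the paper's proof deliberately avoids any global uniform identities and argues locally. Local nilpotency of $\delta_k^q(D)$ comes from Theorem \ref{thmB} (the variety of groups whose $v$-values are right $n$-Engel with $v(G)$ locally nilpotent), after observing via Proposition \ref{STT_prop35} that $\delta_k^q(G_\lambda)$ is locally nilpotent in each factor. Local finiteness of $D^{(k)}/\delta_k^q(D)$ is proved for each subgroup $K$ generated by finitely many $\delta_k$-values: $K$ is residually finite, $K/L$ (with $L$ generated by the $\delta_k^q$-values in $K$) is locally graded by the result of \cite{LMS}, Lemma \ref{basis} bounds all its finite quotients in terms of the number of generators, and a locally-graded argument then forces $K/L$ to be finite. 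Finally, condition (3) for the product is handled by Lemma \ref{enough_fg} together with an Engel-element argument: elements that are $\delta_{j-1}^*$-commutators modulo the Hirsch--Plotkin radical project into $F(H_i)$ in each factor and are shown to be $n_0$-Engel for an $n_0$ depending only on the number of generators (Lemmas \ref{fitting}--\ref{bengel}), whence they lie in the Hirsch--Plotkin radical. Your quotient-closure sketch (lifting Fitting series stepwise) is also vaguer than what is needed; the paper's Proposition \ref{closed_quotients} uses the (locally nilpotent)-by-(locally finite) structure to reduce to a virtually nilpotent finitely generated group and then a Frattini-type argument (Lemma \ref{frattini_finito}) to produce a soluble supplement. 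The missing idea in your proposal, in short, is the replacement of global uniform bounds (which you cannot establish) by generator-dependent bounds exploited through residual finiteness and the locally graded property.
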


It is unclear whether the word $\delta_k$ in the above theorem can be replaced by an arbitrary multilinear commutator word. Another interesting question is whether the classes of groups obtained by omitting the assumption $(3)$ in Theorems \ref{variety1} and \ref{variety2} are varieties, too.

The notation used in this paper is mostly standard. As usual, the group generated by a set $M$ will be denoted by $\langle M\rangle$, the commutator subgroup of a group $G$ by  the symbol $G'$ and  the Frattini subgroup of $G$ by $\Phi(G)$. We say that a quantity is $(a,b,\dots)$-bounded if it is bounded by a number depending only on the parameters $a,b,\dots$.



\section{Preliminaries}
In this section we collect some results that will be useful in the next sections to establish the main theorems. 

A proof of the following well-known result can be found in \cite[Lemma 4.5]{DMS2022}
\begin{lemma}\label{frattini_finito} Let $N$ be a normal subgroup of a finite group $G$. Then there exists a subgroup $H$ of $G$ such that $G=HN$ and $H\cap N\leq \Phi(H)$.
\end{lemma}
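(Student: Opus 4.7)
The plan is to exhibit $H$ as a \emph{minimal supplement} of $N$ in $G$, i.e.\ a subgroup of smallest possible order among those satisfying $HN=G$. Such a subgroup exists because $G$ itself is a supplement and $G$ is finite. The task then reduces to showing that this minimality forces $H\cap N$ to lie inside every maximal subgroup of $H$, which is exactly the Frattini condition.

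First, choose $H\le G$ with $HN=G$ and $|H|$ as small as possible. I would then argue by contradiction: suppose $H\cap N\not\le\Phi(H)$. Recall that $\Phi(H)$ is the intersection of the maximal subgroups of $H$, so there exists a maximal subgroup $M$ of $H$ that does not contain $H\cap N$. The subgroup $M(H\cap N)$ of $H$ then properly contains $M$, and by maximality of $M$ we must have $M(H\cap N)=H$.

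Multiplying by $N$ on the right, and using that $H\cap N\subseteq N$, I get
\[
MN \;=\; M(H\cap N)N \;=\; HN \;=\; G,
\]
so $M$ is itself a supplement of $N$ in $G$. But $|M|<|H|$, which contradicts the minimality in the choice of $H$. Hence $H\cap N\le\Phi(H)$, completing the argument.

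There is no substantial obstacle here; this is a standard minimal-supplement argument whose only subtlety is remembering that $\Phi(H)$ is characterised as the intersection of maximal subgroups of $H$, so that its strict failure to contain $H\cap N$ is witnessed by a single maximal subgroup against which one can test minimality.
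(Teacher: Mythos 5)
Your argument is correct and is the standard minimal-supplement proof of this well-known lemma: the paper itself does not prove it but cites \cite[Lemma 4.5]{DMS2022}, where essentially this same argument (take $H$ of minimal order with $HN=G$ and test a maximal subgroup of $H$ avoiding $H\cap N$) is used. The only point worth making explicit is that $H\cap N$ is normal in $H$ (since $N\trianglelefteq G$), which is what guarantees that $M(H\cap N)$ is a subgroup of $H$ before you invoke maximality of $M$.
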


We will also require the following lemmas from \cite{shum2000}.
\begin{lemma}\cite[Lemma 4.1]{shum2000} \label{shum2000_lemma41} Let $G$ be a group and $w$ a multilinear commutator word on $k$ variables. Then every $\delta_k$-value in $G$ is a $w$-value.
\end{lemma}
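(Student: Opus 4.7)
My plan is to prove the lemma by induction on $k$, the number of variables of $w$. The base case $k=1$ is immediate: the only multilinear commutator word on one variable is $w=x_1$ itself, and every element of $G$ (in particular every $\delta_1$-value) is trivially an $x_1$-value.

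Before carrying out the inductive step, I would record the auxiliary observation that every $\delta_m$-value in $G$ is also a $\delta_j$-value whenever $0\le j\le m$. Indeed, unrolling the recursive definition, one can regard $\delta_m$ as $\delta_j$ applied to $2^j$ disjoint copies of $\delta_{m-j}$ on fresh variables; concretely, each $\delta_m$-value has the shape $\delta_j(c_1,\dots,c_{2^j})$ where $c_1,\dots,c_{2^j}$ are certain $\delta_{m-j}$-values. This can also be obtained by a short induction on $m$ using the identity $\delta_m=[\delta_{m-1},\delta_{m-1}]$.

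For the inductive step, I write $w=[u,v]$ where $u$ and $v$ are multilinear commutator words on $k_1$ and $k_2$ variables respectively, with $k_1,k_2\ge 1$ and $k_1+k_2=k$; in particular $k_1,k_2\le k-1$. A typical $\delta_k$-value has the form $[\alpha,\beta]$, where $\alpha$ and $\beta$ are $\delta_{k-1}$-values. By the auxiliary observation, $\alpha$ is also a $\delta_{k_1}$-value and $\beta$ is also a $\delta_{k_2}$-value. Applying the inductive hypothesis to $u$ and to $v$, we conclude that $\alpha$ is a $u$-value and $\beta$ is a $v$-value. Therefore $[\alpha,\beta]$ is a $w$-value, as required.

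There is no serious obstacle here; the argument is essentially bookkeeping organized around the binary-tree shape of multilinear commutator words. The one thing that deserves care is keeping straight the two different roles of the letter $k$ (it denotes the number of variables of $w$, which is what we induct on, whereas the index $k$ of $\delta_k$ refers to a word on $2^k$ variables), and checking that any split $k=k_1+k_2$ with $k_1,k_2\ge 1$ really forces $k_1,k_2\le k-1$ so that the induction hypothesis is available on both branches.
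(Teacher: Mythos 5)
Your proof is correct. The paper does not prove this lemma itself --- it quotes it from \cite{shum2000} --- and your induction on the number of variables of $w$, splitting $w=[u,v]$ and using the auxiliary fact that every $\delta_{k-1}$-value is a $\delta_{k_i}$-value for $k_i\le k-1$, is essentially the standard argument given in that reference.
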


\begin{lemma}\cite[Lemma 4.2]{shum2000}\label{shum2000_lemma42} Let $w$ be a multilinear commutator word and $G$ a soluble group in which all $w$-values have finite order. Then the verbal subgroup $w(G)$ is locally finite.
\end{lemma}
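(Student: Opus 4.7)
My plan is to prove the stronger statement that $w(G)$ is a torsion group; this suffices, since $w(G)$ is soluble (as a subgroup of the soluble $G$) and any finitely generated torsion soluble group is finite (by an easy induction on derived length, using that a finitely generated torsion abelian group is finite). I would argue by induction on the derived length $d$ of $G$.

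The cases $d \le 1$ are immediate: either $w(G)$ is trivial (when $w$ has weight at least $2$) or $w(G)=G$ is abelian torsion. The decisive base case is $d=2$: since $w$ is a multilinear commutator word, $w(G) \le G'$ is abelian, and because it is generated by the torsion $w$-values it is itself torsion.

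For the inductive step with $d \ge 3$, set $A = G^{(d-1)}$, an abelian normal subgroup of $G$. The induction hypothesis applied to the quotient $G/A$ (which satisfies the same assumptions, since images of torsion $w$-values are torsion) shows that $w(G)A/A \cong w(G)/(w(G)\cap A)$ is torsion. Hence every element of $w(G)$ has some power lying in $w(G)\cap A$, and it is enough to show that $w(G)\cap A$ is a torsion group.

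This last step is where I expect the real obstacle to lie. My approach would be to view $A$ as a $\mathbb{Z}[G/A]$-module, note that its torsion submodule $T(A)$ is characteristic in $A$ and hence normal in $G$, and pass to $G/T(A)$ so as to reduce to the case where $A$ is torsion-free; the goal would then be to prove the stronger statement $w(G)\cap A = 0$. Here the multilinearity of $w$ and Lemma~\ref{shum2000_lemma41} would be essential: one can replace arbitrary $w$-values by $\delta_k$-values (which are themselves $w$-values), so that the relevant commutators lie inside $G^{(k)}$, and then attempt a module-theoretic argument exploiting the tension between the $\delta_k$-values being torsion while acting by conjugation on the torsion-free abelian $G/A$-module $A$.
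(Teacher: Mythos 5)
The paper does not prove this lemma at all: it is imported verbatim from \cite{shum2000}, so your attempt can only be measured against the original source and on its own merits. Your preliminary reductions are correct: since $w(G)$ is soluble, being torsion is equivalent to being locally finite; the induction on the derived length $d$ is set up properly; and the problem does reduce to showing that $w(G)\cap A$ is torsion for $A=G^{(d-1)}$, or, after factoring out the torsion subgroup of $A$, that $w(G)\cap A=1$ when $A$ is torsion-free.

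But that last claim is precisely the substance of the lemma, and your proposal stops exactly there: ``attempt a module-theoretic argument'' is a hope, not an argument, and the specific plan is aimed at the wrong regime. If $d-1\geq k$, then every $\delta_{d-1}$-value is a $\delta_k$-value and hence, by Lemma~\ref{shum2000_lemma41}, a $w$-value; so $A=G^{(d-1)}$ is an abelian group generated by elements of finite order, hence torsion, and after your torsion-free reduction $A=1$ --- the step is trivial, and Lemma~\ref{shum2000_lemma41} is used only where no difficulty exists. The genuinely hard case is $2<d\leq k$, where $G^{(k)}=1$: all $\delta_k$-values are trivial and Lemma~\ref{shum2000_lemma41} gives no information whatsoever. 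Moreover, an element of $w(G)\cap A$ is a product of $w$-values, not a $w$-value, so ``replacing $w$-values by $\delta_k$-values'' does not describe it; and there is no general principle by which elements of finite order acting by conjugation on a torsion-free abelian normal subgroup force such an intersection to vanish --- a torsion element can act nontrivially, even fixed-point-freely, on a torsion-free abelian group (for instance inversion on $\mathbb{Z}$). What actually closes this step in the known proofs is commutator calculus specific to multilinear words: one substitutes elements of abelian normal sections into suitable entries of $w$, observes that the resulting specific elements are again $w$-values and hence of finite order, and uses the linearity of $x\mapsto[x,y]$ modulo abelian normal subgroups to conclude that the relevant subgroups are generated by torsion elements. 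That idea is absent from your proposal, so as it stands it is a correct chain of reductions ending in an unproved claim.
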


An element $g$ of a group $G$ is called a (left) Engel element if for any $x\in G$ there exists $n=n(g,x)\geq 1$ such that $[x,{}_n\,g]=1$. If $n$ can be chosen independently of $x$, then $g$ is a (left) $n$-Engel element.
In a similar way an element $g\in G$ is said to be a right Engel element if for each $x\in G$ there exists a positive integer $n=n(g,x)$ such that $[g,{}_n\, x]=1$. If $n$ does not depend on  $x$, then $g$ is a right $n$-Engel element. The next observation is due to Heineken (see \cite[12.3.1]{Rob}).
\begin{lemma}\label{engel_LR}
Let $g$ be a right $n$-Engel element in a group $G$. Then $g^{-1}$ is a left $(n+1)$-Engel element.
\end{lemma}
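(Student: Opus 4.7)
The plan is to proceed by standard commutator calculus. I would first derive the identity
\[
[x, g^{-1}] = [g, x]^{g^{-1}}
\]
by expanding the trivial identity $1 = [x, g\cdot g^{-1}]$ via $[x, ab] = [x, b][x, a]^b$. Then, by iteration---using that $(g^{-1})^{g^{-1}} = g^{-1}$ together with $[a^h, b^h] = [a, b]^h$---I would establish inductively
\[
[x, {}_{k}\, g^{-1}] = L_k^{g^{-k}}, \qquad k \ge 0,
\]
where $L_0 = x$ and $L_k = [g, L_{k-1}]$ for $k \ge 1$. Thus the conclusion reduces to showing $L_{n+1} = 1$.

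For the latter, I would exploit the right-Engel hypothesis through the substitution $y = g^{j} L_{k-1}$ in the identity $[g, {}_{n}\, y] = 1$. Since $[g, g^{j}] = 1$, we have $[g, g^{j} L_{k-1}] = [g, L_{k-1}] = L_k$, and therefore
\[
[L_k, {}_{n-1}\, g^{j} L_{k-1}] = 1 \qquad \text{for every } j \in \mathbb{Z} \text{ and } k \ge 1.
\]
In the case $n = 2$, the two choices $j = 0$ and $j = 1$ give $[L_k, L_{k-1}] = [L_k, gL_{k-1}] = 1$; expanding $[L_k, gL_{k-1}] = [L_k, L_{k-1}][L_k, g]^{L_{k-1}}$ then yields $[L_k, g] = 1$, and applying this at $k = 1$ already forces $L_2 = 1$. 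For general $n$, one iterates this kind of manipulation by combining the relations obtained from several values of $j$, progressively stripping off the outer brackets in $[L_k, {}_{n-1}\, g^{j} L_{k-1}] = 1$ until $L_{n+1} = 1$ is obtained.

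The principal obstacle is the combinatorial bookkeeping in the general-$n$ step: for $n = 2$ a single comparison between two values of $j$ suffices, but for larger $n$ one must carefully track how the expansions of commutators of products (using $[A, BC] = [A, C][A, B]^{C}$ and $[AB, C] = [A, C]^{B}[B, C]$) interact with repeated right-Engel substitutions, in order to enforce $L_k$ to commute with $g$ at sufficiently many levels. Once $L_{n+1} = 1$ is established, the displayed formula from the first paragraph immediately gives $[x, {}_{n+1}\, g^{-1}] = L_{n+1}^{g^{-(n+1)}} = 1$, completing the proof.
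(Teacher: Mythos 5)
Your preparatory identities are correct: $[x,g^{-1}]=[g,x]^{g^{-1}}$, the induction $[x,{}_{k}\,g^{-1}]=L_k^{g^{-k}}$, the observation that $[g,g^jL_{k-1}]=L_k$, and hence $[L_k,{}_{n-1}\,g^jL_{k-1}]=1$, as well as the whole $n=2$ computation. But note that the reduction to ``$L_{n+1}=1$'' is exactly equivalent to the statement being proved (conjugation by $g^{-(n+1)}$ is an automorphism), so the entire content of the lemma sits in the step you leave as a sketch: ``for general $n$, one iterates this kind of manipulation, progressively stripping off the outer brackets.'' That is a genuine gap, and it is doubtful it can be carried out as described. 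In a group one cannot linearize an Engel identity by comparing the relations obtained from several values of $j$: expanding $[A,BC]=[A,C][A,B]^C$ turns $[L_k,{}_{n-1}\,g^jL_{k-1}]=1$ into a product of differently conjugated terms, and, unlike in a Lie ring, there is no subtraction allowing you to isolate any single factor. The case $n=2$ succeeds only because a single expansion is needed and the lone correction term $[L_k,g]^{L_{k-1}}$ is precisely the element you want to kill; already for $n=3$ the relations say that elements such as $[L_k,L_{k-1}][L_k,g^j]^{L_{k-1}}$ commute with $g^jL_{k-1}$, and no finite combination of these, over various $j$, visibly produces $[L_3,g]=1$ in an arbitrary group (collection-type arguments would at best give conclusions modulo higher terms of the lower central series, which is not enough here).

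The paper itself gives no argument: it quotes the result as Heineken's lemma from \cite[12.3.1]{Rob}. The classical proof replaces your multi-substitution linearization by a single clever substitution, and it fits in three lines: $[x,g^{-1}]=g^xg^{-1}$, and since $[g^{-1},g^{-1}]=1$ an easy induction with $[uv,z]=[u,z]^v[v,z]$ gives $[x,{}_{n+1}\,g^{-1}]=[g^xg^{-1},{}_{n}\,g^{-1}]=[g^x,{}_{n}\,g^{-1}]^{g^{-1}}$; now $g^x$ is a conjugate of $g$ and hence is itself right $n$-Engel, so taking $y=g^{-1}$ in $[g^x,{}_{n}\,y]=1$ yields $[x,{}_{n+1}\,g^{-1}]=1$. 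In other words, the missing idea is to apply the right-Engel hypothesis to the conjugate $g^x$ at the single value $y=g^{-1}$, rather than to $g$ at the family of values $g^jL_{k-1}$; with that substitution your framework closes immediately (it gives $L_{n+1}=1$ for every $x$), whereas the stripping procedure you propose is not justified and is where the proof currently fails.
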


The following theorem is a well-known result of  Baer \cite{Baer} (see also \cite[Satz III.6.15]{Hupp}).
\begin{theorem}\label{engel_in_fitting}
An Engel element of a finite group belongs to the Fitting subgroup.
\end{theorem}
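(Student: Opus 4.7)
The goal is to establish Baer's classical theorem: every left Engel element $g$ of a finite group $G$ lies in $F(G)$. The plan is to show that the normal closure $H=\langle g^G\rangle$ is nilpotent, for then $g\in H\le F(G)$. I proceed by induction on $|G|$; replacing $G$ by $H$, I may assume $G$ is itself generated by the conjugates of a single Engel element $g$.

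The first key observation is that every proper quotient of $G$ is nilpotent. Indeed, for any nontrivial normal subgroup $K$, the image $gK$ is Engel in $G/K$, so by the inductive hypothesis $gK\in F(G/K)$; since $G/K$ is generated by the conjugates of $gK$ and $F(G/K)$ is normal, this forces $G/K=F(G/K)$. If $G$ admits two distinct minimal normal subgroups $N_1,N_2$, then $G$ embeds into the direct product $G/N_1\times G/N_2$ of two nilpotent groups, and is itself nilpotent, contradicting the choice of $G$. Hence we may assume $G$ has a unique minimal normal subgroup $N$.

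The heart of the argument is then to deduce that $N$ is elementary abelian and that $g$ centralizes it. Once $N$ is known to be an elementary abelian $p$-group, the Engel condition forces $g$ to act unipotently: from $[n,{}_k\,g]=1$ for each $n\in N$, a finiteness argument yields $(g-1)^{|N|}=0$ on $N$ viewed as a module over the field of $p$ elements. Writing $g=g_p\,g_{p'}$ via the coprime decomposition, the $p'$-part is simultaneously semisimple and unipotent, and therefore centralizes $N$. Since the same holds for every conjugate of $g$, and these generate $G$, the quotient $G/C_G(N)$ is a $p$-group; combined with the nilpotency of $G/N$ obtained above, this forces $G$ itself to be nilpotent.

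The main obstacle is the preliminary step ruling out a nonabelian socle: one must invoke the Engel hypothesis on $g$ to exclude the possibility that $N$ is a direct product of nonabelian simple groups, which is the most delicate part of Baer's original argument. Once that structural reduction is in place, the linear-algebraic manipulation on the elementary abelian module is routine.
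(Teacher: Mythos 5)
The paper does not actually prove this statement: it is quoted as a classical result of Baer, with references to \cite{Baer} and \cite[Satz III.6.15]{Hupp}, so the only meaningful comparison is with the classical argument you are reconstructing. Your skeleton is indeed Baer's minimal-counterexample strategy, and the parts you carry out are essentially sound: passing to $H=\langle g^G\rangle$ (if $H<G$ induction gives $g\in F(H)\leq F(G)$), nilpotency of all proper quotients, uniqueness of the minimal normal subgroup $N$, and the unipotence argument on an elementary abelian $N$. Two points there need tightening but are fixable: being generated by $p$-elements does not by itself make $G/C_G(N)$ a $p$-group (take $S_3$ and involutions); you must first note that $N\leq C_G(N)$, so $G/C_G(N)$ is a quotient of the nilpotent group $G/N$, and only then does generation by $p$-elements force it to be a $p$-group. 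Likewise the final sentence ``this forces $G$ itself to be nilpotent'' needs a short argument, e.g.\ a Hall $p'$-subgroup $Q$ of the (soluble) group $G$ lies in $C_G(N)$, $QN=Q\times N$ is normal in $G$, hence $Q\trianglelefteq G$, and $Q=1$ because $N$ is the unique minimal normal subgroup and is a $p$-group; so $G$ is a $p$-group, a contradiction.

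The genuine gap is the one you flag yourself: nothing in the proposal excludes the case where the unique minimal normal subgroup $N$ is a direct product of nonabelian simple groups. In that case $N$ is not a module over $\mathbb{F}_p$, the unipotence/Jordan-decomposition computation has no meaning, and the Engel hypothesis must be exploited in a genuinely different, global way --- note that in your abelian case the Engel condition on $g$ is used only through its action on $N$, which cannot suffice in general. Since this exclusion is precisely the substantive group-theoretic content of Baer's theorem, announcing it as ``the most delicate part'' without supplying an argument means the proof is incomplete: as written it establishes Theorem \ref{engel_in_fitting} only when the relevant minimal normal subgroup is abelian (for instance, for soluble groups). To close the gap you would need either Baer's original treatment of the nonabelian socle as in \cite[Satz III.6.15]{Hupp}, or an alternative route such as proving that $\langle g\rangle$ is subnormal in $G$ (e.g.\ via Wielandt's zipper lemma) and invoking the fact that a nilpotent subnormal subgroup is contained in the Fitting subgroup.
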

A proof of the following result can be found in \cite[Lemma 9]{DMS2019}.

\begin{lemma}\label{dms_lemma1} Let $G=U\langle t\rangle$ be a group that is a product of a normal subgroup $U$ and a cyclic subgroup $\langle t\rangle$. Assume that $U$ is nilpotent of class $c$ and there exists a generating set $Y$ of $U$ such that $[y,{}_n\,t]=1$ for every $y\in Y$. Then  $G$ is nilpotent of $(c,n)$-bounded class.
\end{lemma}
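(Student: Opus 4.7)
The plan is to pass to the associated graded Lie ring of $U$, show that $t$ induces there a derivation whose nilpotency degree is $(c,n)$-bounded, translate this back to a statement of the form $[U,{}_M\,t]=1$, and finally bound the nilpotency class of $G$ itself.

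For the first step, set $L_i=\gamma_i(U)/\gamma_{i+1}(U)$ and form the graded Lie ring $L=L_1\oplus\cdots\oplus L_c$. Since each $\gamma_i(U)$ is normal in $G$, the assignment $\delta(\bar u):=\overline{[u,t]}$ defines a grading-preserving map $\delta\colon L\to L$. Standard commutator calculus yields two facts: first, $[u_1u_2,t]=[u_1,t]^{u_2}[u_2,t]\equiv[u_1,t][u_2,t]\pmod{\gamma_{i+1}(U)}$ whenever $u_1,u_2\in\gamma_i(U)$, so $\delta$ is a homomorphism on each $L_i$; and second, $[[u,v],t]\equiv[[u,t],v]\cdot[u,[v,t]]\pmod{\gamma_{i+j+1}(U)}$ for $u\in\gamma_i(U)$, $v\in\gamma_j(U)$ (the Jacobi identity in the associated graded, equivalent to Hall--Witt modulo deeper terms), so $\delta$ is a derivation of $L$. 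The hypothesis $[y,{}_n\,t]=1$ for $y\in Y$ translates to $\delta^n\bar y=0$ in $L_1$; since the $\bar y$ generate the abelian group $L_1$, $\delta^n$ annihilates $L_1$. Because $L$ is generated by $L_1$ as a Lie ring and $\delta$ is a derivation, the iterated Leibniz expansion of $\delta^m$ on any $i$-fold bracket of $L_1$-elements vanishes as soon as $m\geq i(n-1)+1$; hence $\delta^N\equiv 0$ on $L$ with $N:=c(n-1)+1$. Translating back, $[u,{}_N\,t]\in\gamma_{i+1}(U)$ for every $u\in\gamma_i(U)$, and iterating $c$ times gives $[U,{}_M\,t]=1$ with $M:=cN$.

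It remains to show that $G=U\langle t\rangle$, with $U$ nilpotent of class $c$ and $[U,{}_M\,t]=1$, is nilpotent of $(c,M)$-bounded class; I would prove this by a second induction on $c$. For $c=1$, $U$ is abelian, a direct computation gives $\gamma_{k+1}(G)=[U,{}_k\,t]$, and this vanishes at $k=M$. For the inductive step, pass to $\bar G=G/\gamma_c(U)$: by induction it has some bounded class $h(c-1,M)$, so $\gamma_{h(c-1,M)+1}(G)\subseteq\gamma_c(U)\subseteq Z(U)$. Since $\gamma_c(U)$ is centralised by $U$, iterated commutators of $\gamma_c(U)$ with $G$ reduce to iterated commutators with $t$ alone, giving $[\gamma_c(U),{}_M\,G]=[\gamma_c(U),{}_M\,t]=1$ and hence the recursion $h(c,M)=h(c-1,M)+M$.

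The one delicate point is verifying the derivation identity for $\delta$ via Hall--Witt in the associated graded; everything else is routine commutator calculus together with bookkeeping of the explicit polynomial-in-$(c,n)$ bound that results.
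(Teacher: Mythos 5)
A preliminary remark: the paper does not prove this lemma at all --- it quotes it verbatim from \cite[Lemma 9]{DMS2019} --- so there is no in-paper argument to compare yours with; what follows is an assessment of your proposal on its own terms.

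Your outline is essentially viable, but it contains one genuine error, and it sits exactly at the point you yourself flag as delicate: the map $\delta$ induced on the associated graded Lie ring by $u\mapsto[u,t]$ is \emph{not} a derivation, and the congruence $[[u,v],t]\equiv[[u,t],v]\cdot[u,[v,t]]\pmod{\gamma_{i+j+1}(U)}$ is false; it is not a consequence of Hall--Witt. Writing $u^t=u[u,t]$, $v^t=v[v,t]$ and expanding $[u,v]^t=[u[u,t],v[v,t]]$ modulo $\gamma_{i+j+1}(U)$ produces, besides the two terms you keep, the extra factor $[[u,t],[v,t]]$, which lies in $\gamma_{i+j}(U)$ and in general not in $\gamma_{i+j+1}(U)$. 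Equivalently: conjugation by $t$ induces a genuine Lie-ring automorphism $\alpha$ of $L$ preserving each $L_i$, your $\delta$ equals $\alpha-1$, and the correct identity is $\delta[a,b]=[\delta a,b]+[a,\delta b]+[\delta a,\delta b]$; the last term need not vanish. For a concrete failure take $U$ free nilpotent of class $2$ on $a,b$ and $t$ acting by $a\mapsto b$, $b\mapsto a^{-1}$: then $\delta[\bar a,\bar b]=0$ while $[\delta\bar a,\bar b]+[\bar a,\delta\bar b]=-2[\bar a,\bar b]\neq0$, the discrepancy being exactly $[\delta\bar a,\delta\bar b]$.

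The good news is that the error is repairable and your numerology survives. Since $\alpha$ is an automorphism commuting with $\delta=\alpha-1$, one has the exact expansion of $\delta^m[a,b]$ as an integral combination of terms $[\alpha^{j}\delta^{i}a,\delta^{j}b]$ with $i+j=m$; as $\delta^{n}$ is additive on $L_1$ and kills the images of $Y$, it kills $L_1$, and then each term with $i+j\geq i(n-1)+(n-1)+1$ vanishes, giving by induction that $\delta^{i(n-1)+1}$ annihilates $L_i$ and hence $\delta^{N}L=0$ with $N=c(n-1)+1$, exactly your bound. With the word ``derivation'' replaced by ``$\alpha-1$ for the induced automorphism $\alpha$'' and the Leibniz expansion replaced by this binomial-type expansion, the remainder of your argument is correct: the translation back gives $[u,{}_N\,t]\in\gamma_{i+1}(U)$ for $u\in\gamma_i(U)$, hence $[u,{}_{cN}\,t]=1$ for every $u\in U$; and your second induction on $c$ is sound, since for $A=\gamma_c(U)\leq Z(U)$ the map $z\mapsto[z,t]$ is an endomorphism of the abelian normal subgroup $A$, so $[A,G]=[A,t]$ and $[A,{}_M\,G]=[A,{}_M\,t]=1$ with $M=cN$, yielding a class bound of the form $cM$, which is $(c,n)$-bounded as required.
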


 As usual, if $\alpha$ is an automorphism of a group $G$, we write $[G,\alpha]$ for the subgroup generated by all elements of the form $g^{-1}g^{\alpha}$, where $g\in G$. By induction we define $[G,{}_n \alpha]=[[G,{}_{n-1} \alpha],\alpha]$. The following lemma is due to Casolo.

\begin{lemma}\cite[Lemma 6]{C}\label{dms_lemma2} 
Let $A$ be an abelian group, and let $x$ be an automorphism of $A$ such that $[A,{}_n\,x]=1$ for some $n\geq 1$. If $x$ has finite order $q$, then $[A^{q^{n-1}},x]=1$.
\end{lemma}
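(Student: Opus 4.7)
The plan is to work in the group ring of $\langle x\rangle$ acting on $A$ and exploit the binomial expansion of $x^q=(1+(x-1))^q$. Writing $A$ additively and setting $y=x-1$, regarded as a $\mathbb{Z}$-linear operator on $A$, the hypothesis $[A,{}_n\,x]=1$ becomes $y^nA=0$, while $x^q=1$ translates into $(1+y)^q=1$ as operators on $A$. The conclusion $[A^{q^{n-1}},x]=1$ is equivalent to $q^{n-1}yA=0$, because $A$ is abelian and so, for every $a\in A$, $[a^{q^{n-1}},x]=(x-1)(q^{n-1}a)=q^{n-1}(x-1)a$.

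First I would expand $(1+y)^q-1=qy+\binom{q}{2}y^2+\cdots+y^q=0$ and isolate $qy=-y^2Q(y)$, where $Q(y)=\binom{q}{2}+\binom{q}{3}y+\cdots+y^{q-2}\in\mathbb{Z}[y]$. This single identity, valid as an operator equation on $A$, is the engine of the proof: it shows that each factor of $q$ one introduces can be traded for an extra factor of $y$ at the cost of some polynomial coefficient.

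The core step is then a short induction on $m$, showing that for every $m\ge 1$ there exists $R_m(y)\in\mathbb{Z}[y]$ with $q^my=y^{m+1}R_m(y)$ on $A$. The inductive step is formal: assuming $q^my=y^{m+1}R_m(y)$ and using that the scalar $q$ commutes with $y$, one has $q^{m+1}y=y^m\cdot(qy)\cdot R_m(y)=-y^{m+2}Q(y)R_m(y)$, so we may take $R_{m+1}=-QR_m$. Setting $m=n-1$ gives $q^{n-1}y=y^nR_{n-1}(y)$, and the hypothesis $y^nA=0$ at once yields $q^{n-1}yA=0$, which is the desired conclusion.

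I do not anticipate a genuine obstacle: the entire argument amounts to a manipulation in the quotient $\mathbb{Z}[y]/(y^n,(1+y)^q-1)$. The only points requiring care are the harmless commutation of the integer scalar $q$ with the operator $y$, and the translation between $[A^{q^{n-1}},x]$ and $q^{n-1}(x-1)A$, which uses abelianness of $A$ essentially in order to pull the exponent $q^{n-1}$ through the commutator.
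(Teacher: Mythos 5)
Your proof is correct. Note that the paper itself gives no argument for this lemma: it is quoted verbatim from Casolo \cite[Lemma 6]{C}, so there is no in-paper proof to compare against; your derivation — rewriting the hypotheses as $y^nA=0$ and $(1+y)^q=1$ for the operator $y=x-1$, extracting the identity $qy=-y^2Q(y)$ from the binomial expansion, and then the induction $q^m y=y^{m+1}R_m(y)$ — is a clean, self-contained proof of the cited result, and the translation $[A^{q^{n-1}},x]=1 \Leftrightarrow q^{n-1}yA=0$ is handled correctly using that $A$ is abelian and $x$ is an endomorphism. One cosmetic point: your induction begins at $m=1$, so the case $n=1$ (which would require $m=0$) should be noted separately, but there the conclusion $[A,x]=1$ is literally the hypothesis, so nothing is missing in substance.
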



\section{On the Fitting height of a group}
For a group $G$ let $F(G)$ denote the Hirsch–Plotkin radical of $G$, which is the largest locally nilpotent normal subgroup of $G$. We define $F_0(G)=1$ and let $F_{i+1}(G)$ be the full inverse image of $F(G/F_i(G))$ for $i=0,1,\ldots$.  The group  $G$ is said to be of finite Fitting height $j$ if $G=F_j(G)$ for some integer $j$ and $j$ is the least such integer. In this case we write $h(G)=j$. Observe that if $G$ is a finite group, then the Hirsch–Plotkin radical $F(G)$ is just the Fitting subgroup of $G$.

The coprime commutators  $\delta_k^*$ were introduced in \cite{forum}.  For the  reader's convenience we recall here the definition. Let $G$ be a finite group. Every element of $G$ is  a $\delta_0^*$-commutator.  For $k\geq 1$ let $Y$ be the set of all elements of $G$ that are powers of $\delta_{k-1}^*$-commutators. The element $g$ is a $\delta_k^*$-commutator if there exist $a,b\in Y$ such that $g=[a,b]$ and $(|a|,|b|)=1$. The subgroup of $G$ generated by  all  $\delta_k^*$-commutators will be denoted by $\delta_k^*(G)$. One can easily see that if $N$ is a normal subgroup of $G$ and $x$ is a $\delta_k^*$-commutator, then $xN$ is a $\delta_k^*$-commutator in $G/N$. Moreover if $y$ is an element whose image in $G/N$ is a $\delta_k^*$-commutator, then there exists a $\delta_k^*$-commutator $z$ in $G$ such that $y\in zN$.  It is also clear from the definition that if $x$ is a $\delta_k^*$-commutator, then there are $x_1,\dots,x_{2^k}\in G$ and a word $\omega$ obtained from $\delta_k$ adding some powers to sub-commutators such that $x=\omega(x_1,\dots,x_{2^k})$.

It was shown in \cite{forum} that  $\delta_k^*(G)=1$ if and only if the Fitting height of $G$ is at most $k$. It follows that for every $k\geq1$ the subgroup $\delta_k^*(G)$ is precisely the last term of the lower central series of $\delta_{k-1}^*(G)$, that is, $\delta_k^*(G)=\gamma_\infty(\delta_{k-1}^*(G))$. Observe that the definition of $\delta_k^*$-commutators naturally extends to locally finite groups.

In this section we establish some results on the Fitting height and other related length parameters that will  be useful throughout  the paper.
\begin{lemma}\label{loc_finite_finit_generat} 
Let $j$ be a positive integer and $G$ a locally finite group such that $h(K)\leq j$  whenever $K$ is a finite soluble subgroup of $G$. Then $h(H)\leq j$ for every locally soluble subgroup $H$ of $G$.
\end{lemma}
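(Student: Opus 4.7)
The plan is to prove the following abstract statement by induction on $k$: if $M$ is a locally finite, locally soluble group in which every finite subgroup has Fitting height at most $k$, then $h(M)\leq k$. The lemma itself follows on taking $M=H$, because $H$ is locally finite (as a subgroup of a locally finite group) and each finite subgroup of $H$ is a finite soluble subgroup of $G$.

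The base case $k=1$ is immediate: every finite subgroup of $M$ is nilpotent, so $M$ is locally nilpotent and $h(M)\leq 1$. For the inductive step it suffices to show $h(M/F(M))\leq k-1$; by the inductive hypothesis applied to the locally finite, locally soluble group $M/F(M)$, this reduces to verifying that every finite subgroup of $M/F(M)$ has Fitting height at most $k-1$.

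The naive approach fails, since a finite $\bar L_0\leq M/F(M)$ lifts to a finite $L_0\leq M$ but $F(L_0)$ need not lie in $F(M)$ (consider a Sylow $3$-subgroup of $A_4$), so one cannot directly compare $h(\bar L_0)$ with $h(L_0/F(L_0))$. The remedy is a stabilization step. For a fixed finite $L_0\leq M$, the collection $\{F(L)\cap L_0 : L_0\leq L\leq M,\ L\text{ finite}\}$ is downward directed: if $L_0\leq L\leq L'$, then $F(L')\cap L$ is a nilpotent normal subgroup of $L$, hence contained in $F(L)$, which gives $F(L')\cap L_0\subseteq F(L)\cap L_0$; and any two such $L,L'$ share the common overgroup $\langle L,L'\rangle$. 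Being a collection of subgroups of the finite group $L_0$, it attains a minimum at some finite $L^*\geq L_0$, and this stable value equals $L_0\cap F(M)$. The inclusion $\supseteq$ holds because $F(M)\cap L$ is a finite subgroup of the locally nilpotent $F(M)$ normal in $L$, hence nilpotent normal and so contained in $F(L)$; conversely, any $g\in L_0$ lying in $F(L)$ for every finite $L\supseteq L_0$ has locally nilpotent normal closure in $M$ (each finitely generated piece can be placed inside some $F(L)$), so $g\in F(M)$.

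With such $L^*$ chosen, $\bar L_0\cong L_0/(L_0\cap F(M))=L_0/(F(L^*)\cap L_0)\hookrightarrow L^*/F(L^*)$, and since $h(L^*)\leq k$ and Fitting height is subgroup-monotone on finite groups, $h(\bar L_0)\leq h(L^*/F(L^*))\leq k-1$, closing the induction. The delicate point is the stabilization step itself: although the Fitting subgroup of an individual finite $L_0$ need not lie in $F(M)$, enlarging $L_0$ to a sufficiently large finite $L^*$ forces $F(L^*)\cap L_0$ to coincide with $L_0\cap F(M)$, which is exactly what permits the Fitting-height drop required by the induction.
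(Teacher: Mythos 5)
Your proof is correct, but it follows a genuinely different route from the paper's. The paper argues via the coprime commutators $\delta_i^*$: after reducing to $G$ locally soluble, it shows that every $\delta_{j-1}^*$-commutator is an Engel element (because together with any $g$ it lies in a finite soluble subgroup of Fitting height at most $j$, where $\delta_j^*$ vanishes), places these elements in the Hirsch--Plotkin radical via Baer's theorem (Theorem \ref{engel_in_fitting}), and closes an induction on $j$ using the characterization from \cite{forum} that $\delta_j^*(K)=1$ is equivalent to $h(K)\le j$. You instead prove the intrinsic statement that a locally finite, locally soluble group $M$ whose finite subgroups all have Fitting height at most $k$ satisfies $h(M)\le k$, by induction on $k$ through the quotient $M/F(M)$; the crux --- that finite subgroups of $M/F(M)$ have height at most $k-1$ --- is handled by your stabilization argument, and I checked its two halves: the family $\{F(L)\cap L_0\}$ over finite $L\supseteq L_0$ is indeed downward directed and attains a minimum, the inclusion $L_0\cap F(M)\subseteq F(L^*)\cap L_0$ holds because $F(M)\cap L^*$ is a nilpotent normal subgroup of $L^*$, and conversely an element of the stable value lies in $F(L)$ for every large finite $L$, so its normal closure in $M$ is locally nilpotent and it falls into $F(M)$; the embedding $L_0/(L_0\cap F(M))\hookrightarrow L^*/F(L^*)$ together with subgroup-monotonicity of Fitting height then gives the required drop. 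What your approach buys is elementarity and self-containment: it needs neither the $\delta^*$-machinery nor Baer's theorem, and it isolates a clean local-to-global principle for Fitting height in locally (finite-soluble) groups. What the paper's approach buys is economy inside the article: the same $\delta_i^*$/Engel-element technique is reused almost verbatim in Lemma \ref{enough_fg} and in the proofs of the variety theorems, so one toolkit serves several statements.
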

\begin{proof} 
Without less of generality we can assume that $G$ is locally soluble. It suffices to show that $G$ has a normal series of length at most $j$ such that each factor of the series is a locally nilpotent group.

We argue by induction of $j$. If $j=1$, then $G$ is locally nilpotent and there is nothing to prove. 
Assume that $j\geq 2$. For $i=1,2\dots$ let $X_i^*$ be the set of $\delta_i^*$-commutator in $G$. Observe that $X_j^*=1$. Indeed, for any $y\in X_j^* $, there are elements $b_1,\dots,b_{2^{j}}\in G$  and a word $\omega$ such that  $y=\omega( b_1,\dots, b_{2^{j}}).$ Set  $K=\langle b_1,\dots,b_{2^{j}}\rangle$. Note that $y$ is a $\delta_j^*$-commutator in a finite soluble subgroup $K\leq G$. Since $h(K)\leq j$, we  know that $\delta_j^*(K)=1$ and so $y$ is trivial. This implies that   $X_j^*=1$, as desired. 

 Let $F=F(G)$ the Hirsch-Plotkin radical of $G$. In order  to prove that $h(G)\leq j$,  it  would be sufficient to show that $X_{j-1}^*\subseteq F$. In turn, this will be deduced from the fact that every element in $X_{j-1}^*$ is Engel in $G$. 

Let $x\in X_{j-1}^*$. Similarly to what was done above, there exist elements $x_1,\dots,x_{2^{j-1}}\in G$  and a word $\omega_1$ such that  $x=\omega_1 (x_1,\dots, x_{2^{j-1}}).$ Let $g\in G$ and  set $M=\langle g,x_1,\dots,x_{2^{j-1}}\rangle$. Since $h(M)\leq j$, any  $\delta_j^*$-commutator in $M$ is trivial and so, in particular, $\delta_{j-1}^*(M)$ is contained in the Fitting subgroup of $M$. Thus $x\in F(M)$ and there exists $n=n(g,x)$ such that $[g,_n x]=1$, that is, $x$ is an Engel element in $G$.  It follows from  Theorem \ref{engel_in_fitting} that $x\in F$ and so $X^*_{j-1}\subseteq F$, as desired. Set $\bar{G}=G/\delta^*_{j-1}(G)$. By the inductive hypothesis we know that $h(\bar{G})\leq j-1$, and so we conclude that $h(G)\leq j$. This concludes the proof.
\end{proof}

Recall that the generalized Fitting subgroup $F^*(G)$ of a finite group $G$ is  the product of the Fitting subgroup $F(G)$ and all subnormal quasisimple subgroups; here a group is quasisimple if it is perfect and its quotient by the centre is a
non-abelian simple group. The generalized Fitting series of $G$ is defined by
$F_0^*(G) =1$ and, by induction, $F_{i+1}^*(G)$ being the inverse image of $F^*(G/F^*_i(G))$. The least number $h$ such that $F^*(G)=G$ is the generalized Fitting height $h^*(G)$ of $G$. Clearly, if $G$ is soluble, then $h^*(G)=h(G)$ is the ordinary Fitting height of $G$.

\begin{lemma}\label{classY}
Let $j$ be a positive integer and $G$ a finite group such that $h(K)\leq j$ whenever $K$ is a soluble subgroup of $G$.  Then
\begin{itemize} 
\item[(i)]  The hypothesis is inherited by homomorphic images of $G$;
\item[(ii)] $h^*(G)$ is $j$-bounded.
\end{itemize}
\end{lemma}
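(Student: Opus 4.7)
For part (i), the plan is to apply Lemma \ref{frattini_finito} directly. Let $\bar L = L/N$ be a soluble subgroup of $G/N$, with $L$ its preimage in $G$. By the lemma, there exists $H \leq L$ with $HN = L$ and $H \cap N \leq \Phi(H)$. Since $H/(H \cap N) \cong HN/N = \bar L$ is soluble and $\Phi(H) \supseteq H \cap N$, the quotient $H/\Phi(H)$ is also soluble, being a further quotient of $\bar L$. The Frattini subgroup $\Phi(H)$ is nilpotent for any finite group, so $H$ is an extension of a nilpotent group by a soluble group and is therefore soluble. By the hypothesis, $h(H) \leq j$, and since the Fitting height does not increase under homomorphic images, $h(\bar L) \leq h(H) \leq j$.

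For part (ii), I would bound $h^*(G)$ by analysing the generalized Fitting series, using (i) to pass the hypothesis to quotients and Schreier's conjecture to control the semisimple part. The soluble radical $R(G)$ is itself a soluble subgroup of $G$, so $h(R(G)) \leq j$. In $\bar G = G/R(G)$, which inherits the hypothesis by (i), the soluble radical is trivial, hence $F^*(\bar G) = E(\bar G)$ is a direct product of non-abelian simple groups (centreless since $F(\bar G) = 1$). The containment $C_{\bar G}(F^*(\bar G)) \leq F^*(\bar G)$ yields an embedding $\bar G \hookrightarrow \mathrm{Aut}(F^*(\bar G))$, and in particular $\bar G / F^*(\bar G) \leq \mathrm{Out}(F^*(\bar G))$. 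By Schreier's conjecture, $\mathrm{Out}$ of each simple factor is soluble, so $\mathrm{Out}(F^*(\bar G))$ is soluble-by-(symmetric group on components). Iterating this analysis along the generalized Fitting series of $\bar G$ and reapplying (i) at each step should give a bound on $h^*(\bar G)$, and hence on $h^*(G)$, in terms of $j$.

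The main obstacle is bounding the depth of the iterated semisimple-layer structure: at each step of the generalized Fitting series, a new layer of simple components can appear, coming from the permutation action on the components produced at the previous level. To bound the number of such steps by a function of $j$, one exploits that soluble subgroups of sufficiently deep iterated wreath products (for instance of $A_4$'s inside alternating groups, which arise as permutation parts) have Fitting heights that grow with the wreath depth; the hypothesis, transferred to the relevant quotient via (i), therefore forces this depth to be $j$-bounded. Combined with the $j$-bound on each soluble segment, this yields the desired $j$-bound on $h^*(G)$.
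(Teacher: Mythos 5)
Your part (i) is correct and is essentially the paper's own argument: apply Lemma \ref{frattini_finito} to the preimage, observe that the complement-like subgroup $H$ is soluble because $H/\Phi(H)$ is (the Frattini subgroup being nilpotent), invoke the hypothesis to get $h(H)\le j$, and pass to the homomorphic image.

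For part (ii), however, there is a genuine gap. The paper does not prove this statement by an iteration of its own; it quotes it directly from \cite[Theorem 1.3]{DS}. Your reduction modulo the soluble radical, the identification $F^*(\bar G)=E(\bar G)$, and the use of Schreier's conjecture to see that all non-soluble content above a semisimple layer comes from the permutation action on its components are the right framework, but the decisive step --- bounding the number of non-soluble layers in terms of $j$ --- is only asserted. To run your argument you must exhibit, inside $G$ itself, a soluble subgroup whose Fitting height grows with that number of layers, and the remark about soluble subgroups of iterated wreath products of $A_4$'s does not provide it: nothing shows that $G$ (or the relevant sections) actually contains such an iterated wreath configuration, that the permutation actions at successive layers are transitive or even faithful enough on the pieces you would choose, or that soluble subgroups selected in the successive layers can be lifted through the intervening extensions (one is working with quotients, so a lifting device such as Lemma \ref{frattini_finito} is needed again) and assembled so that their Fitting heights genuinely accumulate rather than collapse. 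Producing such a soluble subgroup --- typically starting from involutions in the simple components via Feit--Thompson and climbing layer by layer --- is exactly the non-trivial content of the quoted theorem and of the related results on non-soluble length, not a routine iteration. The paper's own remark that it is open whether one can take $h^*(G)\le j$ (\cite[Problem 1.4]{DS}) is a further indication that this step cannot be dispatched by the sketch given; as written, your proof of (ii) is a plan rather than a proof.
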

\begin{proof}
To prove (i) let $N$ be a normal subgroup of $G$. For any soluble  subgroup $KN/N$ of $G/N$ we have to show that  $h(KN/N)\leq j$. Indeed, by Lemma \ref{frattini_finito} we can choose  $L\leq KN$ such that $KN=LN$ and $L\cap N\leq \Phi(L)$. Note that $L$ is soluble because $L/\Phi(L)$ is, being isomorphic to a homomorphic image of  $KN/N$. Therefore $h(L)$ is at most $j$. Since $KN/N$ is a homomorphic image of $L$, we conclude that $h(KN/N)\leq j$.

Claim (ii) is immediate from  \cite[Theorem 1.3]{DS}.\end{proof}

Remark that it is an open question (see \cite[Problem 1.4]{DS}) whether $h^*(G)$ is at most $j$ whenever $G$ is as in the above lemma.

\begin{lemma}\label{enough_fg}
Let $j$ be a positive integer and  $G$  a (locally nilpotent)-by-(locally finite) group. Assume that $G$ is locally soluble and for any finitely generated subgroup $H\leq G$ we have $h(H)\leq j$. Then $h(G)\leq j$. 
\end{lemma}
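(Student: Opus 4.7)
The plan is to induct on $j$. The base case $j=1$ is immediate: every finitely generated subgroup of $G$ has Fitting height one, hence is nilpotent, so $G$ is locally nilpotent and $h(G)=1$.

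For the inductive step $j\geq 2$, let $N$ be a locally nilpotent normal subgroup of $G$ with $G/N$ locally finite. Since $N\leq F(G)$, the quotient $\overline G:=G/F(G)$ is locally finite (being a quotient of $G/N$) and locally soluble. Because $F(G)$ is locally nilpotent, it is enough to show that $h(\overline G)\leq j-1$; the estimate $h(G)\leq j$ then follows at once. By Lemma \ref{loc_finite_finit_generat} applied to $\overline G$, the task reduces to verifying that every finite soluble subgroup $\overline K$ of $\overline G$ satisfies $h(\overline K)\leq j-1$.

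Given such a $\overline K$, lift finitely many generators to elements $a_1,\dots,a_n\in G$ and set $H=\langle a_1,\dots,a_n\rangle$. By hypothesis $h(H)\leq j$, and $\overline K\cong H/(H\cap F(G))$. Note that $H$ is virtually nilpotent: $H/(H\cap N)$ embeds in the locally finite group $G/N$ as a finitely generated subgroup, hence is finite, while $H\cap N$ is a locally nilpotent subgroup of finite index in $H$, and is therefore finitely generated nilpotent. Consequently $F(H)$ is finitely generated nilpotent of finite index in $H$. The inclusion $H\cap F(G)\leq F(H)$ yields a natural surjection $\overline K\to H/F(H)$ with kernel $F(H)/(H\cap F(G))$; since $h(H/F(H))\leq j-1$, it would suffice to arrange $F(H)\leq F(G)$ (so that the kernel collapses) to conclude $h(\overline K)\leq j-1$.

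The crux of the argument is therefore to produce, for any such $\overline K$, a finitely generated overgroup $H$ of the initial lift with $F(H)\leq F(G)$ (this only enlarges $\overline K$ to a finite subgroup of $\overline G$, which does not weaken the desired bound). If some $f\in F(H)\setminus F(G)$, then the normal closure $\langle f\rangle^G$ is not locally nilpotent, so finitely many conjugators $g_1,\dots,g_m\in G$ can be selected with $\langle f^{g_1},\dots,f^{g_m}\rangle$ not nilpotent; adjoining them to $H$ expels $f$ from the Fitting subgroup of the enlarged group, which continues to satisfy $h\leq j$. The main obstacle is to show that this enlargement procedure terminates, and this is expected to rely on the finiteness of $F(H)/(H\cap N)$ (which bounds the number of ``bad'' cosets) together with a Noetherian descent on the Fitting subgroups of the resulting finitely generated overgroups.
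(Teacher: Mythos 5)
There is a genuine gap, and it sits exactly at the point you flag yourself: the termination of the enlargement procedure. Your reduction is fine up to the point where you need, for each finite soluble $\overline K\leq G/F(G)$, a finitely generated overgroup $H'$ of the lift $H$ with $F(H')\leq F(G)$. But you give no proof that such an $H'$ exists; you only show that a single bad element $f\in F(H)\setminus F(G)$ can be expelled by adjoining finitely many conjugates. Each such enlargement can create new elements of $F(H_{i+1})\setminus F(G)$ lying outside $H_i$, so the number of ``bad'' cosets is not monotone and the process has no reason to stop. The suggested ``Noetherian descent'' does not help: the natural chain here is the \emph{descending} chain $F(H_i)\cap H_0$, and finitely generated (virtually) nilpotent groups satisfy the maximal condition on subgroups, not the minimal one, so a descending chain of such subgroups need not terminate; and even if it stabilized inside $H_0$, that would say nothing about bad elements outside $H_0$. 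In short, the crux of Lemma \ref{enough_fg} --- passing from the hypothesis on finitely generated subgroups to a statement about $G/F(G)$ --- is precisely what remains unproved, and the auxiliary claim you would need (every finitely generated subgroup is contained in a finitely generated $H'$ with $F(H')\leq F(G)$) is itself a nontrivial assertion that you neither prove nor reduce to anything established.

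For comparison, the paper avoids controlling $F(H')$ against $F(G)$ globally. It argues element-wise: if $x$ is a $\delta_{j-1}^*$-commutator modulo $F(G)$, then for an \emph{arbitrary} $y\in G$ the finitely generated subgroup $H=\langle y,x_1,\dots,x_{2^{j-1}}\rangle$ has $H/F(H)$ finite of Fitting height at most $j-1$, which forces $x\in F(H)$ and hence $[y,{}_n\,x]=1$ for some $n$. Thus every such $x$ is an Engel element of $G$, and by the Baer-type result (Theorem \ref{engel_in_fitting}, in the form applicable here) it lies in the Hirsch--Plotkin radical; consequently $\delta_{j-1}^*(G/F(G))=1$ and $h(G/F(G))\leq j-1$. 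The quantification ``for all $y$'' on a fixed element is what sidesteps the moving-target problem that blocks your construction. If you want to rescue your approach, you would need either a proof of the overgroup claim or a different mechanism (such as the Engel/coprime-commutator argument) replacing the unproved termination step.
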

\begin{proof}

Let $F=F(G)$ the Hirsch-Plotkin radical of $G$. For $i=0,1,2,\dots$ set $$G_i^*=\{x\in G;\ xF\text{ is a $\delta_i^*$-commutator in $G/F$}\}.$$
To prove that $h(G)\leq j$,  it is sufficient to show that $G_{j-1}^*= F$. This will be deduced from Theorem \ref{engel_in_fitting} and the fact that every element in $G_{j-1}^*$ is Engel in $G$. In what follows we write $\bar{G}$ for $G/F$.

Let $x\in G_{j-1}^*$. There are elements $x_1,\dots,x_{2^{j-1}}\in G$  and a word $\omega$, obtained from the word $\delta_{j-1}$ adding some powers to subcommutators, such that  $$\bar{x}=\omega(\bar x_1,\dots,\bar x_{2^{j-1}}).$$ Let $y\in G$ and $H=\langle y,x_1,\dots,x_{2^{j-1}}\rangle$. By hypothesis, $h(H)\leq j$. Let $h(H)=k$ and consider the series
$$H=H_1\geq H_2\geq\dots\geq H_k\geq1,$$ where $H_k=F(H)$ and $H_i/H_{i+1}=\delta_i^*(H/H_{i+1})$ for $i=1,\dots,k-1$.  Note that $H_k$ has finite index in $H$, because $H$ is finitely generated (locally nilpotent)-by-(locally finite) and so $H/H_k$ is finite, being a finitely generated locally finite group. 
Thus  $h(H/H_k)=k-1\leq j-1$ and we deduce that $\delta_{j-1}^*(H/H_k)=1$. In particular, it follows that $x\in F(H)$. Since $y\in H$,  there exists a positive integer $n$ such that $[y,{}_{n}\,x]=1$ and so $x$ is an Engel element in $G$, as desired. Since $x$ is arbitrary in $G_{j-1}^*$, we have $G_{j-1}^*= F$.  Hence, the quotient $\bar{G}$ is locally finite and  $\delta_{j-1}^*(\bar{G})=1$. This implies that $h(\bar{G})\leq j-1$ and so $h(G)\leq j$, as desired.
\end{proof}

\begin{proposition}\label{closed_quotients}
Let $j\geq 1$ and $G$ be a (locally nilpotent)-by-(locally finite)  group such that $h(K)\leq j$ for any locally soluble subgroup $K\leq G$. Then the hypothesis is inherited by homomorphic images of $G$.
\end{proposition}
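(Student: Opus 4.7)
The plan is to show that $\bar G = G/M$ inherits the hypothesis for any normal subgroup $M$ of $G$. The image of the locally nilpotent normal subgroup $N$ (with $G/N$ locally finite) shows that $\bar G$, and therefore any subgroup of $\bar G$, is again (locally nilpotent)-by-(locally finite). Consequently, for any locally soluble $\bar K\le\bar G$ it suffices, by Lemma \ref{enough_fg}, to prove $h(\bar H)\le j$ for every finitely generated subgroup $\bar H\le\bar K$. Any such $\bar H$ is soluble and nilpotent-by-finite.

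The key step is to construct a soluble subgroup $\tilde S\le G$ that maps onto $\bar H$. Once this is done, the hypothesis on $G$ gives $h(\tilde S)\le j$, and since Fitting height is monotone under quotients we conclude $h(\bar H)\le j$. To produce $\tilde S$, lift generators of $\bar H$ to obtain a finitely generated $H\le G$ with $HM/M=\bar H$. Then $H\cap N$ is finitely generated (it has finite index in $H$ because $H/(H\cap N)$ embeds in the locally finite group $G/N$ and is finitely generated), and so it is nilpotent; write $F=H/(H\cap N)$, a finite group. Let $L\le F$ be the image of $H\cap M$. Since $F/L$ is a finite quotient of $H/(H\cap M)\cong\bar H$, it is soluble.

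Apply Lemma \ref{frattini_finito} inside $F$ to find $S\le F$ with $SL=F$ and $S\cap L\le\Phi(S)$. Since $S/\Phi(S)$ is a quotient of $S/(S\cap L)\cong SL/L=F/L$, it is soluble, and therefore $S$ is soluble. Let $\tilde S\le H$ be the preimage of $S$. Then $H\cap N\le\tilde S$ and $\tilde S/(H\cap N)\cong S$, so $\tilde S$ is (nilpotent)-by-(finite soluble) and thus soluble. The equality $SL=F$ lifts to $\tilde S(H\cap M)=H$, whence $\tilde S M=HM$, so $\tilde S$ indeed maps onto $\bar H$. Applying the hypothesis to the soluble (hence locally soluble) subgroup $\tilde S\le G$ finishes the proof.

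The chief obstacle is producing $\tilde S$: a naive preimage $H$ of $\bar H$ in $G$ need not be soluble even though $\bar H$ is, because the finite quotient $H/(H\cap N)$ can a priori acquire non-soluble composition factors which collapse only upon quotienting by $M$. The Frattini-type Lemma \ref{frattini_finito}, applied inside this finite quotient, is exactly what is needed to carve out a soluble lift from the non-soluble $H$.
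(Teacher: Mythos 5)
Your proof is correct and follows essentially the same route as the paper's: reduce via Lemma \ref{enough_fg} to a finitely generated (hence soluble, virtually nilpotent) situation, apply the Frattini-type Lemma \ref{frattini_finito} in the resulting finite quotient to extract a soluble subgroup of $G$ mapping onto the given subgroup of the image, and then invoke the hypothesis together with monotonicity of Fitting height under quotients. The only differences are cosmetic: you argue directly with a lifted finitely generated subgroup $H$, whereas the paper argues by contradiction after replacing $G$ itself by such a subgroup.
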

\begin{proof} 
Let $N$ be a normal subgroup of $G$. Without loss of generality we may assume that $G/N$ is locally soluble. We need to show that $h(G/N)\leq j$. 

Assume by contradiction that this is false. In view of Lemma \ref{enough_fg} there is a finitely generated soluble subgroup of $G/N$ with Fitting height at least $j+1$, so we can simply assume that $G$ itself is finitely generated.  In particular $G/N$ is soluble. Since $G$ is (locally nilpotent)-by-(locally finite), it is also virtually nilpotent. Let $T$ be a normal nilpotent subgroup of finite index in $G$. Set $\bar{G}=G/T$. In view of Lemma \ref{frattini_finito}, there exists a soluble subgroup $\bar S\leq\bar G$ such that $\bar G=\bar N\bar S$.  Let $S$ be the inverse image of $\bar S$. It is clear that $S$ is soluble and $G=NS$. Note that $G/N$ is isomorphic to $S/(S\cap N)$ and so $h(G/N)\leq h(S)\leq j$, as desired. This concludes the proof. 
\end{proof}



\section{Bounding the order of a finite group}\label{Lie_machinery}

The purpose of this section is to establish the following lemma which plays a key role in our subsequent arguments. We say that a subset $X$ of a group $ G$ is normal if it is invariant under the inner automorphisms of $G$. The set is commutator-closed if $[x,y]\in X$ for any $x,y\in X$.

\begin{lemma}\label{basis} Let $q,s$ be positive integers and $G$ a finite group generated by a normal commutator-closed set $X$ such that $x^q=1$ and $[g,x_1,\dots,x_s]\in X$ for any $g\in G$ and $x,x_1,\dots,x_s\in X$. Suppose that $G$ satisfies a law $u\equiv1$ and $h^*(G)=h$. If $G$ is $m$-generator, then the order of $G$ is $(h,m,q,s,u)$-bounded.
\end{lemma}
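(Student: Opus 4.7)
The plan is to argue by induction on $h = h^*(G)$, the base case $h = 0$ being trivial. First I would check that the hypotheses on $(G, X)$ transfer to any quotient $G/N$: the image of $X$ stays normal, commutator-closed, of exponent dividing $q$, and closed under the iterated-commutator condition, while $G/N$ remains $m$-generator and satisfies $u \equiv 1$. Setting $R = F^*(G)$, we have $h^*(G/R) = h - 1$, so by induction $|G/R|$ is $(h-1, m, q, s, u)$-bounded, reducing the task to bounding $|R| = |F(G) E(G)|$.

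Next I would bound the layer $E(G) = L_1 \cdots L_r$. Each $L_i/Z(L_i)$ is a finite non-abelian simple group satisfying $u \equiv 1$, so by Jones's theorem (a non-trivial variety contains only finitely many finite simple groups) its order is $u$-bounded, and hence so is $|L_i|$ because bounded simple groups have bounded Schur multiplier. Moreover $E(G)/Z(E(G))$ is a direct product of non-abelian simple groups and an $m$-generator section of $G$, so the classical bound $d(S^k) \ge (\log k)/(\log |S|)$ forces the number $r$ of factors to be $(m, u)$-bounded; thus $|E(G)|$ is $(m, u)$-bounded.

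The remaining task, bounding $|F(G)|$, is where the $X$-hypothesis is really used. The key observation is that $\gamma_{s+1}(G)$ is generated by elements of $X$ of order dividing $q$: expanding iterated commutators $[g_0, g_1, \ldots, g_s]$ via $G = \langle X \rangle$ yields products of conjugates of commutators $[g, x_1, \ldots, x_s]$ with $x_i \in X$, each of which lies in $X$ by hypothesis. Hence $G/\gamma_{s+1}(G)$ is $m$-generator, nilpotent of class at most $s$, and standard commutator-calculus estimates give it $(q, s)$-bounded exponent; Zelmanov's solution of the restricted Burnside problem then bounds $|G/\gamma_{s+1}(G)|$ in terms of $(m, q, s)$, and with it $|F(G)/(F(G) \cap \gamma_{s+1}(G))|$. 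The hard part, and the main obstacle, is to bound the nilpotent subgroup $N = F(G) \cap \gamma_{s+1}(G)$: although $N$ is not itself $m$-generator, the ambient subgroup $\gamma_{s+1}(G)$ is $G$-normally generated by at most $m^{s+1}$ commutators of exponent dividing $q$ and lies in the variety defined by $u$. My plan would be to invoke a Zelmanov--Shumyatsky-style bound for finite groups in a fixed non-trivial variety that are $G$-normally generated by boundedly many elements of bounded exponent; this final step, passing from $G$-equivariant normal generation by exponent-$q$ elements plus the law $u \equiv 1$ to a uniform bound on the nilpotent piece, is the technical heart of the argument.
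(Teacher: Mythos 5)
Your overall frame (induction on a Fitting-type height, Jones's theorem for the semisimple part, and the observation that $G/\gamma_{s+1}(G)$ is $m$-generator, of class at most $s$ and generated by elements of order dividing $q$, hence of bounded order) is in the spirit of the paper, but the step you yourself flag as ``the technical heart'' is genuinely missing, and the black box you propose to invoke does not exist. A finite group lying in a fixed nontrivial variety and normally generated by boundedly many elements of bounded order can have arbitrarily large order: take $H=\mathbb{F}_p[C_n]$, viewed as an elementary abelian $p$-group, and $G=H\rtimes C_n$ with $(n,p)=1$ acting by the regular representation. Then $G$ is metabelian (so satisfies a fixed law), is $2$-generated, has $h^*(G)=h(G)=2$, and $H=\langle h^G\rangle$ for a single element $h$ of order $p$, yet $|H|=p^n$ is unbounded. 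So ``$G$-normal generation by few elements of exponent dividing $q$ plus a law $u\equiv1$'' cannot bound $|F(G)\cap\gamma_{s+1}(G)|$. Note also that your reduction has already discarded the decisive hypotheses at this point: intersecting with $F(G)$ destroys the property of being generated by elements of $X$, and your claim that $\gamma_{s+1}(G)$ is normally generated by at most $m^{s+1}$ commutators \emph{of exponent dividing} $q$ conflates two generating sets (commutators in the $m$ generators of $G$ are boundedly many but need not lie in $X$ or have order dividing $q$; commutators in elements of $X$ do lie in $X$ but are unboundedly many).

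The paper's proof arranges matters so that the nilpotent piece to be bounded is \emph{generated}, not merely normally generated, by boundedly many elements of the commutator-closed set $X$. It inducts on the length $j\le 2h$ of a normal series with nilpotent or semisimple factors; when the bottom term $N$ is nilpotent it sets $M=\langle [g,x_1,\dots,x_s]:g\in N,\ x_i\in X\rangle\le N$, shows $N/M\le Z_s(G/M)$, and uses Baer's theorem together with the class-$\le s$ argument to bound $[G:M]$, so that $M$ is generated by boundedly many elements of $X$. Passing to a Sylow $p$-subgroup $P$ of $M$ for $p\mid q$ and applying the Burnside basis theorem, $P$ is generated by boundedly many projections of elements of $X$; commutator-closedness of $X$ forces every group commutator in these generators to have order dividing the $p$-part of $q$, so by Lemma \ref{jpaa_lemma22} the corresponding Lie monomials in $L_p(P)$ are ad-nilpotent of bounded index, Proposition \ref{jpaa_lemma23} supplies a polynomial identity coming from $u\equiv1$, Theorem \ref{jpaa_thm24} gives bounded nilpotency class of $L_p(P)$, and Lemma \ref{jpaa_lemma21} writes $P$ as a product of boundedly many cyclic subgroups of bounded order. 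This Lie-theoretic passage is exactly where $x^q=1$, the commutator-closedness of $X$, and the law $u\equiv1$ are converted into a bound, and no shortcut through normal generation replaces it. (A smaller repairable point: your bound on the number of components of $E(G)$ rests on calling $E(G)/Z(E(G))$ an ``$m$-generator section,'' which is not immediate for a normal section; it can be fixed since by your induction $F^*(G)$ has bounded index, hence is boundedly generated, and $E(G)/Z(E(G))\cong F^*(G)/F(G)$ is a quotient of it -- the paper instead gets bounded generation of the bottom term $N$ from its bounded index and counts subgroups of bounded index.)
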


In the sequel the above result will be used in the case where $X$ is the set of $\delta_k$-values of a group $G$, for some fixed $k\geq 1$. We will implicitly use the following observation.

\begin{remark} Let $k\geq 1$ and denote by $X$ the set of $\delta_k$-values in a group $G$. Then $[g,x_1,\ldots,x_k]\in X$, for any $g\in G$ and $x_1,\ldots,x_k \in X$.
\end{remark}
The proof of  Lemma \ref{basis} uses Lie-theoretical tools in the spirit of the solution of the restricted Burnside problem \cite{RBP1,RBP2}.

Let $L$ be a Lie algebra over the field. 
If $X\subseteq L$ is a subset of $L$, by a commutator in elements of $X$ we mean any element of $L$ that can be obtained as a Lie product of elements of $X$ with some system of brackets. 
 
 A deep result of Zelmanov (announced in \cite[III(0.4)]{zel1}, a detailed proof was published in \cite{zel2}), which has numerous important applications to group theory, says that if a Lie algebra $L$ is PI and is generated by finitely many elements all commutators in which are ad-nilpotent, then $L$ is nilpotent. The following result was deduced in  \cite[Corollary of Theorem 4]{KS} from Zelmanov's theorem.

\begin{theorem}\label{jpaa_thm24} Let $L$ be a Lie algebra over $\mathbb{F}_p$ generated by $m$ elements $a_1,\ldots,a_m$. Assume that $L$ satisfies the identity $f\equiv 0$ and that each monomial in the generators $a_1,\ldots,a_m$ is ad-nilpotent of index at most $n$. Then $L$ is nilpotent of $\{f,m,n\}$-bounded class.
\end{theorem}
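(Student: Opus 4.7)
The plan is to deduce this quantitative statement from Zelmanov's qualitative theorem (cited just above the statement) by a universal-object argument. The key observation is that the hypothesis describes a class of Lie algebras that are all quotients of a single finitely generated Lie algebra depending only on the parameters $f$, $m$, $n$; Zelmanov's theorem applied to that universal object produces a nilpotency class depending only on $f$, $m$, and $n$, which then bounds the class of every $L$ in the class.

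Concretely, I would let $F$ denote the free Lie algebra over $\mathbb{F}_p$ on free generators $x_1, \ldots, x_m$, and write $\mathcal{M} \subseteq F$ for the set of all Lie monomials in $x_1, \ldots, x_m$, i.e.\ all bracketed products of the $x_i$. Let $T$ be the ideal of $F$ generated, on the one hand, by all substitution instances $f(u_1, \ldots, u_r)$ with $u_i \in F$ (forcing the identity $f \equiv 0$) and, on the other hand, by all elements $[v, y, y, \ldots, y]$ with $n$ occurrences of $y$, where $v \in F$ and $y \in \mathcal{M}$ (forcing ad-nilpotency of index at most $n$ for each monomial in the generators). Set $L_0 = F/T$ and write $\bar{x}_i$ for the image of $x_i$. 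By construction $L_0$ satisfies the identity $f \equiv 0$ and every monomial in $\bar{x}_1, \ldots, \bar{x}_m$ is ad-nilpotent of index at most $n$ in $L_0$. Moreover, if $L$ is any Lie algebra over $\mathbb{F}_p$ generated by $m$ elements $a_1, \ldots, a_m$ satisfying the hypotheses of the theorem, then the assignment $\bar{x}_i \mapsto a_i$ extends to a surjective homomorphism from $L_0$ onto $L$.

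Now $L_0$ is finitely generated and PI (it satisfies $f \equiv 0$), and by construction every commutator in its generators $\bar{x}_1, \ldots, \bar{x}_m$ is ad-nilpotent. Zelmanov's theorem therefore applies to $L_0$ and yields that $L_0$ is nilpotent; denote its class by $c = c(f, m, n)$. Since every Lie algebra $L$ in the statement is a homomorphic image of $L_0$, every such $L$ is nilpotent of class at most $c$, which is exactly the required $\{f, m, n\}$-bounded bound.

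The delicate point is the matching of hypotheses with Zelmanov's theorem, through the interpretation of the word \emph{monomial}. Under the natural reading in which monomials range over \emph{all} bracketed products of the generators, the ideal $T$ above forces ad-nilpotency of every commutator in $\bar{x}_1, \ldots, \bar{x}_m$, matching Zelmanov's hypothesis exactly, and the argument is direct. If instead monomials were meant narrowly as left-normed commutators, one would need an auxiliary step to promote ad-nilpotency from left-normed commutators to arbitrary Lie words in the generators (typically by induction on bracket depth, possibly at the cost of a larger ad-nilpotency index depending on $m$ and $n$) before invoking Zelmanov. I expect this interpretative matching — together with the inherently non-constructive compactness content of extracting a uniform bound $c(f,m,n)$ from Zelmanov's qualitative statement — to be the only conceptual content beyond Zelmanov's theorem itself, which the proof would use as a black box.
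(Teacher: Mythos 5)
Your argument is correct. Note that the paper does not prove this statement at all: it is imported verbatim from \cite{KS} (Corollary of Theorem 4), where the deduction from Zelmanov's theorem is carried out by essentially the same relatively-free (universal) algebra argument you give, so you have reconstructed the intended proof rather than found an alternative. Two small points: your ``broad'' reading of \emph{monomial} (arbitrary bracketings) is the right one here --- it matches the paper's definition of a commutator in elements of a set and the way the theorem is invoked in the proof of Lemma~\ref{basis}, so no auxiliary promotion step is needed; and your bound $c(f,m,n)$ a priori depends on $p$ through the ground field of the free Lie algebra, but since $f$ is by hypothesis a polynomial over $\mathbb{F}_p$ it already encodes $p$, so this is consistent with the claimed $\{f,m,n\}$-dependence and with how the bound is used later (where $p$ ranges over the divisors of a fixed $q$).
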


Let $p$ be a prime. Given a group $G$, write $L_p(G)$ for the Lie algebra associated with the $p$-dimension series of $G$ (see \cite{shum} for details). If $x\in G$, the corresponding homogeneous element of $L_p(G)$ is denoted by $\tilde{x}$.

\begin{lemma}\label{jpaa_lemma22} \cite[p.\ 131]{lazard} For any $x\in G$ we have $(ad\tilde{x})^p=ad(\widetilde{x^p})$. Consequently, if $x^{p^s}=1$ then $\tilde{x}$ is ad-nilpotent of index at most $p^{s}$.
\end{lemma}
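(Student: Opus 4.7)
The plan is to deduce both assertions from the restricted Lie algebra structure on $L_p(G)$, following Lazard. First I would set up (or recall) the $p$-dimension series $\{D_n(G)\}_{n\ge 1}$, for instance via the formula $D_n=\prod_{ip^j\ge n}\gamma_i(G)^{p^j}$, and verify the two basic inclusions $[D_i,D_j]\subseteq D_{i+j}$ and $D_i^p\subseteq D_{ip}$. Together these make the graded abelian group $L_p(G)=\bigoplus_{n\ge 1}D_n/D_{n+1}$ into a \emph{restricted} Lie algebra over $\mathbb{F}_p$, with bracket induced by the group commutator and with a $p$-operation $[p]$ defined on homogeneous elements by $\tilde{x}^{[p]}=\widetilde{x^p}$ whenever $x\in D_i$ (the element $x^p$ being regarded in $D_{ip}/D_{ip+1}$).

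Given this framework, the identity in the first half of the lemma is a specialisation of the general fact $(ad\, y)^p = ad(y^{[p]})$ valid in any restricted Lie algebra of characteristic $p$. Taking $y=\tilde{x}$ and using $\tilde{x}^{[p]}=\widetilde{x^p}$ immediately yields $(ad\,\tilde{x})^p = ad(\widetilde{x^p})$.

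For the consequence, one iterates this identity to obtain $(ad\,\tilde{x})^{p^s}=ad(\widetilde{x^{p^s}})$. If $x^{p^s}=1$ in $G$, then $x^{p^s}$ lies in $D_n$ for every $n$, so its homogeneous image $\widetilde{x^{p^s}}$ in $L_p(G)$ is zero; hence $(ad\,\tilde{x})^{p^s}=0$, which is the claimed ad-nilpotence of index at most $p^s$.

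The main obstacle is the foundational step of verifying that the group-theoretic $p$-th power really descends to a well-defined restricted $p$-operation on $L_p(G)$ compatible with the Lie bracket and satisfying Jacobson's axioms. This is precisely the content of Lazard's theorem on the restricted Lie algebra associated with the dimension series and relies on Hall--Petresco collection formulas alongside the inclusion $D_i^p\subseteq D_{ip}$. Once this is in place, the identity $(ad\, y)^p=ad(y^{[p]})$ follows formally by expanding $(ad\, y)^p$ via the Leibniz rule in the universal enveloping algebra $U(L_p(G))$, and the rest of the argument is purely algebraic.
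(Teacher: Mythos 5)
The paper gives no proof of this lemma at all—it is quoted directly from Lazard—and your argument is precisely the standard one underlying that citation: once Lazard's theorem that the $p$-th power map $D_i/D_{i+1}\to D_{pi}/D_{pi+1}$ endows $L_p(G)$ with a restricted Lie algebra structure is granted, the identity $(\mathrm{ad}\,\tilde{x})^p=\mathrm{ad}(\widetilde{x^p})$ is just the Jacobson axiom $\mathrm{ad}(y^{[p]})=(\mathrm{ad}\,y)^p$ applied to $y=\tilde{x}$, and iterating it together with the observation that $x^{p^s}=1$ forces $\widetilde{x^{p^s}}=0$ yields the ad-nilpotency of index at most $p^s$. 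So your proposal is correct and matches the intended (cited) argument; the one caveat, which you acknowledge yourself, is that the substantive content—the well-definedness of the restricted $p$-operation induced by group $p$-th powers, via the Hall--Petresco identities—is delegated to Lazard, exactly as the paper does by citing him.
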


If a group-law $w\equiv 1$ holds in $G$, then $L_p(G)$ satisfies a polynomial identity whose form is determined by $p$ and $w$.

The next proposition follows from the proof of \cite[Theorem 1]{wz}.

\begin{proposition}\label{jpaa_lemma23}\cite[Lemma 2.3]{shum2002} Let $G$ be a group in which a law $w \equiv1$ holds. Then there exists a non-zero multilinear Lie polynomial $f$  over the field with $p$ elements, whose form depends only on $p$ and $w$, such that $L_p(G)$ satisfies the identity $f\equiv 0$.
\end{proposition}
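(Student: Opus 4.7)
The plan is to follow the classical Magnus--Lazard--Zelmanov route, converting the group law $w\equiv1$ into a Lie law by extracting the leading homogeneous component of $w$ with respect to the $p$-dimension filtration and then multilinearizing. Suppose $w=w(x_1,\dots,x_k)$ and let $F=F_k$ denote the free group of rank $k$. The crucial input from Magnus and Jennings is that $F$ embeds into the group of units of the ring of non-commutative formal power series $\mathbb{F}_p\langle\langle X_1,\dots,X_k\rangle\rangle$ via $x_i\mapsto 1+X_i$, and under this embedding the restricted universal enveloping algebra of $L_p(F)$ is realized inside this power series ring in such a way that the $n$-th term $D_n F$ of the $p$-dimension series corresponds exactly to elements congruent to $1$ modulo the augmentation ideal raised to the $n$-th power.

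First I would write the Magnus expansion of $w$ as $w=1+w_d+w_{d+1}+\cdots$, where $d\ge1$ is the minimal degree for which the homogeneous component is non-zero; such a $d$ exists precisely because $w$ is a non-trivial element of $F$, and its existence as well as the fact that $w_d$ is a (restricted) Lie polynomial $f_0(X_1,\dots,X_k)$ over $\mathbb{F}_p$ follow from the identification above, since $w\in D_dF\setminus D_{d+1}F$ maps into the degree-$d$ homogeneous component of $L_p(F)$. Next I would multilinearize $f_0$ by the standard procedure of replacing each repeated variable by a formal sum of new variables and extracting the component multilinear in the new variables. Because multilinearization is invertible (up to factorials that are non-zero modulo $p$ when the degrees involved are $<p$, and via a more careful degree-raising trick otherwise), the resulting multilinear Lie polynomial $f(y_1,\dots,y_m)$ over $\mathbb{F}_p$ is non-zero, and its form depends only on $p$ and on $w$.

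The final step is to verify that $L_p(G)\models f\equiv0$ whenever $G\models w\equiv1$. Given homogeneous elements $\tilde g_1,\dots,\tilde g_m\in L_p(G)$, chosen as images of $g_j\in D_{n_j}(G)\setminus D_{n_j+1}(G)$, I would substitute them into the word $w$ at the group level in the appropriate pattern and examine the resulting element $w(g_{i_1},\dots,g_{i_k})=1$. By the compatibility of the Lie bracket on $L_p(G)$ with the group commutator modulo deeper dimension subgroups, the ``leading coefficient'' in the $p$-dimension filtration of the (trivially zero) element $w(g_{i_1},\dots,g_{i_k})-1$ is precisely $f(\tilde g_1,\dots,\tilde g_m)$; consequently $f(\tilde g_1,\dots,\tilde g_m)=0$ in $L_p(G)$. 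Since the $\tilde g_j$ are arbitrary homogeneous, and since $L_p(G)$ is spanned by such elements, $f$ is an identity of $L_p(G)$.

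The main obstacle is the bookkeeping in the last step, i.e.\ showing that the multilinearized leading term of $w$ is actually an identity on $L_p(G)$ for arbitrary, not merely homogeneous, substitutions. This requires using that a multilinear polynomial identity for homogeneous elements extends by linearity to arbitrary elements, together with the restricted Lie structure (the $p$-power map coming from raising to $p$-th powers in $G$) that keeps the degrees compatible under the Lazard--Jennings correspondence. The delicate point is ensuring that the multilinearization procedure does not accidentally produce the zero polynomial in characteristic $p$; handling this cleanly is what forces one to invoke restricted rather than ordinary Lie algebras and to choose the multilinearization degree with an eye on divisibility by $p$.
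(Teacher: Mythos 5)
The paper gives no proof of this proposition: it is quoted from \cite[Lemma 2.3]{shum2002} and attributed to the proof of \cite[Theorem 1]{wz}. Your outline follows the same Magnus--Jennings--Lazard route as that source, but the verification step contains a genuine gap. You claim that for homogeneous $\tilde g_j$ of degrees $n_j$ the ``leading coefficient'' of $w(g_{i_1},\dots,g_{i_k})-1$ in the dimension filtration is $f(\tilde g_1,\dots,\tilde g_m)$. First, this does not typecheck: $w$ has $k$ variables while $f$ has $m$, and you never specify the group-level substitution that relates them. More importantly, the underlying leading-term argument only works when the polynomial being evaluated is already multilinear (or all $n_j$ are equal): a monomial of the leading form $w_d$ in which $x_i$ occurs with multiplicity $e_i$, evaluated at $g_i\in D_{n_i}$, lands in filtration degree $\sum_i e_i n_i$, which varies from monomial to monomial, so the monomials of $w_d$ do not sit in a common graded piece and become entangled with the higher components $w_{d+1},w_{d+2},\dots$ of the Magnus expansion. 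Hence one cannot first show that $w_d$ is ``an identity on homogeneous elements'' and then multilinearize; the logic must run the other way.

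The repair --- and this is what Wilson and Zelmanov actually do --- is to linearize at the group level before extracting leading terms: substitute for each $x_i$ a product $y_{i1}\cdots y_{is}$ of new free generators, and take the multidegree-$(1,\dots,1)$ component of the resulting element of the group algebra; this component is a multilinear Lie element $f$, and its non-vanishing for a suitable $s$ is checked inside the free restricted enveloping algebra rather than by ``inverting'' the multilinearization (which, as you yourself note, can fail in characteristic $p$ because of the factorials). Once $f$ is multilinear, every monomial in homogeneous substituents $\tilde g_j\in L_{n_j}$ has the same total degree $n_1+\dots+n_m$, the leading-term comparison takes place in a single graded piece, and linearity in each slot extends the identity from homogeneous to arbitrary elements of $L_p(G)$ --- that last extension, which you single out as the main obstacle, is in fact the easy part. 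So your plan names the right ingredients and the right danger points, but the order of operations (multilinearize first, then pass to leading terms and verify) must be reversed for the argument to close.
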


Nilpotency of the algebra $L_p(G)$ usually enables one to draw strong conclusions about the group $G$. A proof of the next lemma can be found in \cite[Proposition 2.11]{shum}.

\begin{lemma}\label{jpaa_lemma21} Let a group $G$ be generated by elements $a_1,\dots,a_m$. Assume that $L_p(G)$ is nilpotent of class $c$ and let $\rho_1,\dots,\rho_s$ be the list of all possible commutators in $a_1,\dots,a_m$ on at most $c$ variables. Then the group $G$ can be written as a product $G=\langle \rho_1\rangle\langle \rho_2\rangle \cdots \langle \rho_s\rangle$ of the cyclic subgroups generated by $\rho_i$.
\end{lemma}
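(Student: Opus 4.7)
The plan is to prove Lemma \ref{basis} by induction on $h = h^*(G)$, combining Jones' theorem on varieties with the Lie-theoretic machinery of Zelmanov developed in Theorem \ref{jpaa_thm24} and Lemmas \ref{jpaa_lemma22}, \ref{jpaa_lemma21}, and Proposition \ref{jpaa_lemma23}. The base case $h=0$ is trivial. For the inductive step, set $N = F^*(G)$. The image $\bar X$ of $X$ in $G/N$ inherits normality, commutator-closure, the $s$-fold closure property, and the law $x^q=1$; $G/N$ still satisfies $u \equiv 1$, is $m$-generator, and has $h^*(G/N) = h-1$. By the induction hypothesis $|G/N|$ is $(h-1, m, q, s, u)$-bounded, so it suffices to bound $|N|$.

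Decompose $N = F(G) E(G)$ as a central product of the Fitting subgroup and the layer. Jones' theorem asserts that only finitely many non-abelian simple groups satisfy a law $u \equiv 1$, and their orders are $u$-bounded; since $G$ is $m$-generator, the number of components of $E(G)$ is $m$-bounded, so $|E(G)|$ is $(m, u)$-bounded. It remains to bound the nilpotent part $|F(G)|$. One verifies that images of $X$ in any abelian section of $G$ have order dividing $q$, so iterating through the soluble series of $F(G)$ shows that every prime divisor of $|F(G)|$ divides $q$, leaving at most $\omega(q)$ Sylow subgroups of $F(G)$ to analyze.

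Fix a prime $p \mid q$ and let $P$ be the Sylow $p$-subgroup of $F(G)$. Replacing $X\cap P$ by the set $X_p$ of $p$-components of $X$-elements, we obtain a $P$-normal, commutator-closed, $s$-fold closed, $q$-torsion set generating $P$. Choose $a_1, \dots, a_m \in X_p$ with $P = \langle a_1, \dots, a_m \rangle$, and form $L = L_p(P)$. Proposition \ref{jpaa_lemma23} gives a polynomial identity $f \equiv 0$ on $L$ depending only on $p$ and $u$. By commutator-closure, every Lie monomial in $\tilde a_1, \dots, \tilde a_m$ is the image of an $X_p$-element, so has $p$-power order dividing $q$; by Lemma \ref{jpaa_lemma22} each such monomial is ad-nilpotent of index at most $q$. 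Theorem \ref{jpaa_thm24} now yields that $L$ is nilpotent of $(f, m, q)$-bounded class $c$, and Lemma \ref{jpaa_lemma21} expresses $P$ as a product $\langle \rho_1\rangle \cdots \langle \rho_t\rangle$ with $\rho_i$ commutators of weight at most $c$ in the $a_j$'s, $t$ bounded in $m,c$, and $\rho_i \in X_p$, giving $|P| \le q^t$.

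Multiplying over the primes $p \mid q$ bounds $|F(G)|$, hence $|N|$, hence $|G|$, completing the induction. The main obstacle is the Lie-theoretic step: one must ensure the ad-nilpotence bounds are uniform in $p$ (which holds because $p$ is restricted to divisors of $q$) and justify that the induced set $X_p$ inside the Sylow $p$-subgroup retains all four hypotheses on $X$; here the $s$-fold closure is needed both to control iterated actions of ad on arbitrary $\tilde g \in L$ (when combined via Lemma \ref{jpaa_lemma22} with the $q$-torsion) and to guarantee the correct inheritance of the Engel-type condition under the Sylow and quotient reductions.
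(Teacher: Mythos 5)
Your proposal does not address the statement at hand. The statement to be proved is Lemma \ref{jpaa_lemma21}: if $G=\langle a_1,\dots,a_m\rangle$ and the associated Lie algebra $L_p(G)$ is nilpotent of class $c$, then $G$ is the product of the cyclic subgroups generated by the group commutators in $a_1,\dots,a_m$ of weight at most $c$. What you have written instead is an inductive outline of Lemma \ref{basis} (the bound on $|G|$ in terms of $h^*(G)$, $m$, $q$, $s$, $u$), and in the middle of that outline you \emph{invoke} Lemma \ref{jpaa_lemma21} as a black box (``Lemma \ref{jpaa_lemma21} expresses $P$ as a product $\langle\rho_1\rangle\cdots\langle\rho_t\rangle$''). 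So no argument for the statement in question is offered at all; the gap is not a missing step but the entire proof.

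For the record, the paper itself does not reprove this lemma either: it is quoted from \cite[Proposition 2.11]{shum}, where the argument runs through the $p$-dimension (Zassenhaus--Jennings--Lazard) series $G=D_1\geq D_2\geq\cdots$ underlying $L_p(G)$. Nilpotency of class $c$ forces the homogeneous components $D_i/D_{i+1}$ to vanish for $i>c$, and one shows by induction along the series that each factor $D_i/D_{i+1}$ with $i\leq c$ is covered by the images of the commutators $\rho_j$ of weight $i$ in the generators together with $p$-th powers coming from lower weight, whence $G=\langle\rho_1\rangle\cdots\langle\rho_s\rangle$. If you intend to supply a self-contained proof, that is the argument you need to reconstruct; the material in your proposal (Jones' theorem, the decomposition $N=F(G)E(G)$, the reduction to Sylow subgroups) belongs to the proof of Lemma \ref{basis} and is irrelevant here. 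Separately, note that your sketch of Lemma \ref{basis} also deviates from the paper's proof of that result (the paper inducts on the length of a normal series with nilpotent or semisimple factors rather than on $h^*(G)$ directly, and passes to the subgroup $M$ generated by the elements $[g,x_1,\dots,x_s]$ before applying the Lie machinery), but that is beside the point, since Lemma \ref{basis} is not the statement under review.
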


We are now ready to prove Lemma \ref{basis}.

\begin{proof}[Proof of Lemma \ref{basis}] Let $j$ be the minimal number such that $G$ has a normal series $$G=G_1>\dots>G_j=1,$$ all of whose factors are either nilpotent or direct products of non-abelian simple groups. Obviously, $j\leq2h$ so we can use induction on $j$. Let $N=G_{j-1}$ be the last nontrivial term of the above series. By induction the order of the quotient group $G/N$ is $(h,m,q,s,u)$-bounded and so $N$ has bounded index as well. Therefore $N$ can be generated by a bounded number of elements. Say $N$  is $m_0$-generator, where $m_0$ is $(h,m,q,s,u)$-bounded. 

Suppose that $N=S_1\times\dots\times S_t$ is a direct product of non-abelian simple groups $S_i$. By a result of Jones \cite{Jones} there are only finitely many simple groups satisfying the law $u\equiv1$ and we deduce that there is a $u$-bounded constant $C$ such that $|S_i|\leq C$ for any $i=1,\dots,t$. Taking into account that $N$ is $m_0$-generator, observe that the number of subgroups of $N$ of index at most $C$ is $(m_0,C)$-bounded (see \cite[Theorem 7.2.9]{mhall}) and therefore $t$ is $(h,m,q,s,u)$-bounded. It follows that the order of $N$ is $(h,m,q,s,u)$-bounded and we are done.

We therefore assume that $N$ is nilpotent.

Let $M$ be the subgroup generated by all elements $[g,x_1,\dots,x_s]$, where $g\in N$ and $x_1,\dots,x_s\in X$. Note that $M$ is normal in $G$ and $N/M$ is contained in $Z_s(G/M)$, the $s$th term of the upper central series of $G/M$. An application of the Baer theorem \cite[Corollary 2, p. 113]{Rob2} yields that $\gamma_{s+1}(G/M)$ has bounded order. It follows that the  quotient group $(G/M)/\gamma_{s+1}(G/M)$ is of bounded order, being  $m$-generator, nilpotent of class at most $s$,  and generated by elements of order dividing $q$. Thus, we deduce that the index $[G:M]$ is bounded as well. Therefore $M$ can be generated by boundedly many, say $m_1$, elements. It is sufficient to show that the order of $M$ is $(h,m,q,s,u)$-bounded.

Since $M$ is nilpotent and generated by $M\cap X$, it is clear that any prime divisor of $|M|$ is a divisor of $q$. Hence, it is sufficient to bound the order of the Sylow $p$-subgroup of $M$ for any prime $p$ dividing $q$. Write $M=P\times O_{p'}(M)$, where $P$ is the Sylow $p$-subgroup of $M$. Let $a_1,a_2,\dots$ be all the elements of $X$ contained in $M$, and for any $i$ we write $a_i=b_iy_i$, where $b_i\in P$ and $y_i\in O_{p'}(M)$. Then $P=\langle b_1,b_2,\dots\rangle$. Since $P$ is an $m_1$-generator finite $p$-group, the Burnside Basis Theorem \cite[5.3.2]{Rob} shows that $P$ is generated by some $m_1$ elements in the list $b_1,b_2,\dots$. Without any loss of generality we will assume that $P=\langle b_1,b_2,\dots, b_{m_1}\rangle$. Let $p_0$ be the maximal power of $p$ dividing $q$. Clearly, the order of any $b_i$ divides $p_0$. Moreover, since $[b_i,b_j]$ is again in the list $b_1,b_2,\dots$, it follows that any commutator in $b_1,b_2,\dots$ has order dividing $p_0$. 

Let $L=L_p(P)$. We know that the the identity $u\equiv 1$ holds in $P$ so it follows from Lemma \ref{jpaa_lemma23} that there exists a non-zero Lie polynomial $f$ over the field with $p$ elements, whose form depends only on $u$ and $q$ (recall that $p$ is a prime divisor of $q$), such that the algebra $L_p(P)$ satisfies the identity $f\equiv 0$.

Consider an arbitrary Lie monomial $\sigma$ in the generators $\tilde b_1,\tilde b_2,\dots,\tilde b_{m_1}$ of $L_p(P)$ and let $\rho$ be the group commutator in $b_1,b_2,\dots,b_{m_1}$ having the same system of brackets as $\sigma$. The definition of $L_p(P)$ yields that either $\sigma=0$ or $\sigma=\tilde\rho$. We know that  any commutator in $b_1,b_2,\dots$ has order dividing $p_0$, so $\rho^{p_0}=1$. Thus Lemma \ref{jpaa_lemma22} implies that $\sigma$ is ad-nilpotent of index at most $p_0$. Theorem \ref{jpaa_thm24} now says that $L_p(P)$ is nilpotent of class depending only on $m_1,u$ and $p_0$. Combining this with Lemma \ref{jpaa_lemma21} we conclude that there exists a number $l$, bounded in terms of $m_1,u,p_0$ only, such that $P$ can be written as a product of at most $l$ cyclic subgroups each of order at most $p_0$. Therefore $P$ is of order at most ${p_0}^l$.  Since $p_0$ is a divisor of $q$, the result follows.
\end{proof}



\section{Results on varieties}

In this section we will establish  the main results on varieties, that is, Theorems \ref{variety1} and \ref{variety2}.  


\subsection{The variety  $\frak X(j,q,w)$}
For positive integers $j,q$ and a word $w$, denote by $\frak X= \frak X(j,q,w)$ the class of all groups $G$ with the following properties:\begin{enumerate}
\item $G$ satisfies the law $w^q\equiv1$;

\item  The verbal subgroup $w(G)$ is locally finite;

\item $h(K)\leq j$ for any locally soluble subgroup $K\leq w(G)$.
\end{enumerate}
The main goal of this subsection is to prove that the class $\frak X$ is a variety for any multilinear commutator $w$.

The following result will be useful.
\begin{proposition}\label{cartesian_w} Let $j$ be a positive integer and $w$ a word. Assume $\{G_{\lambda}, \lambda\in \Lambda\}$ is a family of groups such that, for each $\lambda\in \Lambda$, all locally soluble subgroups $K$ of $w(G_{\lambda})$ satisfy $h(K)\leq j$. Let $G$ be the Cartesian product of the groups $\{G_{\lambda}\}$ and assume that $w(G)$ is locally finite. Then $h(H)\leq j$ for any locally soluble subgroup $H\leq w(G)$.
\end{proposition}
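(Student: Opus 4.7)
The plan is to reduce the problem to finite subgroups via Lemma \ref{enough_fg} and then translate the Fitting-height condition into the vanishing of the $\delta_j^*$-commutators, a condition that behaves well under projection to the factors.

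First I would observe that since $w(G)$ is assumed locally finite, any subgroup $H\leq w(G)$ is locally finite and hence trivially (locally nilpotent)-by-(locally finite). Combined with the local solubility of $H$, Lemma~\ref{enough_fg} reduces the task to proving $h(H_0)\leq j$ for every finitely generated subgroup $H_0\leq H$. Each such $H_0$ is finite and soluble.

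Next, for each $\lambda\in\Lambda$ let $\pi_\lambda\colon G\to G_\lambda$ denote the canonical projection. Since $\pi_\lambda$ carries $w$-values in $G$ to $w$-values in $G_\lambda$, one has $\pi_\lambda(w(G))\leq w(G_\lambda)$, and hence $\pi_\lambda(H_0)$ is a finite soluble subgroup of $w(G_\lambda)$. Being finite soluble, $\pi_\lambda(H_0)$ is in particular locally soluble, so by the hypothesis on $G_\lambda$ we have $h(\pi_\lambda(H_0))\leq j$. Using the characterization recalled in Section~3, this is equivalent to $\delta_j^*(\pi_\lambda(H_0))=1$.

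It now suffices to show $\delta_j^*(H_0)=1$. Suppose for contradiction that $x\in H_0$ is a nontrivial $\delta_j^*$-commutator. Since $H_0$ embeds diagonally into $\prod_\lambda\pi_\lambda(H_0)\leq G$, there exists $\lambda$ with $\pi_\lambda(x)\neq 1$. Applying the observation (noted in Section~3) that the image of a $\delta_j^*$-commutator in a quotient is again a $\delta_j^*$-commutator, with $N=\ker(\pi_\lambda\!\restriction_{H_0})$, we see that $\pi_\lambda(x)$ is a nontrivial $\delta_j^*$-commutator in $\pi_\lambda(H_0)$, contradicting $\delta_j^*(\pi_\lambda(H_0))=1$. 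Therefore $\delta_j^*(H_0)=1$, which gives $h(H_0)\leq j$, and Lemma~\ref{enough_fg} then yields $h(H)\leq j$.

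The main obstacle that this route sidesteps is that an unrestricted direct product of nilpotent groups of unbounded classes need not be locally nilpotent, so one cannot form a Fitting series for $H$ simply by taking Cartesian products of the Fitting series of the factors. Passing to the coprime-commutator formalism $\delta_j^*$, which detects Fitting height element-wise in finite soluble groups and is preserved under homomorphic images, circumvents this difficulty cleanly.
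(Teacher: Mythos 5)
Your proof is correct and follows essentially the same route as the paper's: reduce to finite soluble subgroups and then show every $\delta_j^*$-commutator dies by projecting to the Cartesian factors, where the hypothesis gives $\delta_j^*(\pi_\lambda(H_0))=1$. The only cosmetic difference is that the paper performs the reduction with Lemma~\ref{loc_finite_finit_generat} (for locally finite groups), whereas you invoke Lemma~\ref{enough_fg}; both apply here, since $w(G)$ is locally finite and hence trivially (locally nilpotent)-by-(locally finite).
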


\begin{proof}
Since $w(G)$ is locally finite,  $h(K)\leq j$ if and only if $\delta_j^*(K)=1$ whenever $K$ is a locally soluble subgroup of $w(G)$.
By  Lemma \ref{loc_finite_finit_generat} it suffices to show that $h(H)\leq j$ for any finite soluble subgroup $H$ of $w(G)$. Let us prove that $\delta_j^*(H)=1$. Denote by $H_\lambda$ the projection of $H$ on the Cartesian factor $G_\lambda$.
Choose $x\in\delta_j^*(H) $ and observe that the projection $x_\lambda$ of $x$ in $H_{\lambda}$ is trivial because $h(H_\lambda)\leq j$. Hence $x$ is trivial  and the result follows.    \end{proof}

Next we consider the particular case of Theorem \ref{variety1} where $w=\delta_k$ is the derived word for some $k\geq 0$.

\begin{proposition}\label{delta_case}
The class $\frak X=\frak X(j,q,\delta_k)$ is a variety.
\end{proposition}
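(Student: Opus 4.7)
The plan is to invoke Birkhoff's theorem and verify that $\frak X = \frak X(j,q,\delta_k)$ is closed under subgroups, quotients and Cartesian products. Closure under subgroups is immediate from the three defining conditions. For quotients, if $N \trianglelefteq G \in \frak X$, then the law $\delta_k^q \equiv 1$ is obviously inherited, and $(G/N)^{(k)} = G^{(k)}N/N$ is a homomorphic image of the locally finite group $G^{(k)}$, hence locally finite and in particular (locally nilpotent)-by-(locally finite). Condition (3) then passes to $G^{(k)}/(G^{(k)} \cap N) \cong G^{(k)}N/N$ by Proposition \ref{closed_quotients}.

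The main task is thus the closure of $\frak X$ under Cartesian products. Let $\{G_\lambda\}_{\lambda \in \Lambda}$ be a family in $\frak X$ and put $G = \prod_\lambda G_\lambda$, with $\pi_\lambda \colon G \to G_\lambda$ the coordinate projections. Condition (1) is obvious, and once condition (2) is in hand Proposition \ref{cartesian_w} immediately delivers condition (3). So the crux is to show that $G^{(k)}$ is locally finite. Let $H$ be a finitely generated subgroup of $G^{(k)}$; writing each generator as a product of $\delta_k$-values and using that the inverse of a $\delta_k$-value is again a $\delta_k$-value, I may enlarge $H$ and assume $H = \langle x_1, \ldots, x_m \rangle$ where each $x_i$ is itself a $\delta_k$-value in $G$.

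For each $\lambda$ set $K_\lambda = \pi_\lambda(H)$; this is a finite $m$-generator subgroup of the locally finite group $G_\lambda^{(k)}$. I intend to apply Lemma \ref{basis} to $K_\lambda$ with $X_\lambda$ taken to be the set of all $\delta_k$-values of $G_\lambda$ that lie in $K_\lambda$. This set is commutator-closed (the commutator of two $\delta_k$-values is a $\delta_k$-value) and $K_\lambda$-invariant (conjugation by elements of $K_\lambda$ preserves both being a $\delta_k$-value of $G_\lambda$ and lying in $K_\lambda$); it generates $K_\lambda$ because it contains $\pi_\lambda(x_1),\ldots,\pi_\lambda(x_m)$; every element has order dividing $q$ by the law $\delta_k^q \equiv 1$; and the Remark after Lemma \ref{basis} ensures $[g, y_1, \ldots, y_k] \in X_\lambda$ whenever $g \in K_\lambda$ and $y_1, \ldots, y_k \in X_\lambda$. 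Moreover $K_\lambda$ satisfies $\delta_k^q \equiv 1$, and every soluble subgroup of $K_\lambda$ is a soluble subgroup of $G_\lambda^{(k)}$ and hence has Fitting height at most $j$, so Lemma \ref{classY}(ii) bounds $h^*(K_\lambda)$ in terms of $j$ alone. Lemma \ref{basis}, applied with $s = k$ and $u = \delta_k^q$, then yields $|K_\lambda| \leq B$ for a constant $B = B(j,m,q,k)$ independent of $\lambda$.

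Since $H$ embeds into $\prod_\lambda K_\lambda$ through the projections, $H$ is residually finite, and its exponent divides $B!$. A finitely generated residually finite group of finite exponent is finite by Zelmanov's solution to the restricted Burnside problem, so $H$ is finite. This proves $G^{(k)}$ is locally finite and completes the verification. The principal obstacle is the uniform bound on $|K_\lambda|$: choosing the normal, commutator-closed set $X_\lambda$ of $\delta_k$-values inside the projection $K_\lambda$ so that all the hypotheses of Lemma \ref{basis} (including the bound on $h^*$ supplied by Lemma \ref{classY}(ii)) hold simultaneously is where the structural work of the preceding sections is used.
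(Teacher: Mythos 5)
Your proposal is correct and takes essentially the same route as the paper: the heart of the argument — reducing condition (3) to Proposition \ref{cartesian_w}, and obtaining a uniform bound on the finite projections of a finitely generated subgroup of $D^{(k)}$ by applying Lemma \ref{basis} with the normal commutator-closed set of $\delta_k$-values and with $h^*$ bounded via Lemma \ref{classY}(ii) — is exactly the paper's. The only deviations are minor: you close the quotient case with Proposition \ref{closed_quotients} where the paper uses Lemma \ref{loc_finite_finit_generat} together with Lemma \ref{classY}(i), and you deduce finiteness of $H$ from Zelmanov's solution of the restricted Burnside problem (finitely generated, residually finite, bounded exponent) where the paper argues more economically that the kernels of the projections have bounded index and, by Hall's theorem on the finitely many normal subgroups of a given finite index in a finitely generated group, intersect in a subgroup of finite index, hence trivially in a finite-index subgroup; both finishes are valid.
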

\begin{proof}
The class $\frak X$ is obviously closed with respect to taking subgroups of its members. 

Let us show that $\frak X$ is closed under taking quotients. Let $G\in\frak X$, and let $N$ be a normal subgroup of $G$.
It is clear that the quotient $G/N$ satisfies the  law $\delta_k^q\equiv1$ and moreover the $k$th term of the derived series of $G/N$  is locally finite since it is a homomorphic image of $G^{(k)}$. We need to check the property that every locally soluble subgroup $KN/N$ of $(G/N)^{(k)}$ satisfies the condition $h(KN/N)\leq j$. In view of Lemma \ref{loc_finite_finit_generat} it is enough to check the property $h(KN/N)\leq j$ for finite soluble subgroups of $(G/N)^{(k)}$. Observe that finite subgroups of $G^{(k)}$  satisfy the hypothesis of Lemma \ref{classY}, so an application of  Lemma \ref{classY}(i) yields the desired result.

It remains to show that if $D=\prod G_i$ is a Cartesian product of groups $G_i\in\frak X$, then $D\in \frak X$. The identity $\delta_k^q\equiv 1$ is obviously satisfied in $D$. In view of Proposition \ref{cartesian_w} it is enough to show that $D^{(k)}$ is locally finite. Let $T$ be any finite set of elements in $D^{(k)}$. There exist finitely many $\delta_k$-values $a_1,\ldots,a_m\in D$ such that $T\subseteq \langle a_1,\ldots,a_m\rangle$. Hence it is sufficient to prove that $H=\langle a_1,\ldots,a_m\rangle$ is finite.  

For any $i$ let $H_i$ be the projection of $H$ on $G_i$. Observe that $H_i$ is finite, since it is finitely generated and contained in $G_i^{(k)}$. It follows that $H$ is residually finite. Moreover it follows from Lemma \ref{classY}(ii) that  $h^*(H_i)$ is $j$-bounded, since any locally soluble subgroup $K$ of $G_i^{(k)}$ satisfies $h(K)\leq j$. Thus by Lemma \ref{basis} each $H_i$ has finite order bounded in terms of $j,k,m,q$ only.  For any $i$  let $K_i$ be the normal subgroup (the kernel of the homomorphism) of $H$ such that $H/K_i$ is isomorphic to $H_i$. Note that the intersection of all $K_i$ is trivial. Since $H_i$ has finite order bounded in terms of $j,k,m,q$, the index of any $K_i$ is bounded in terms of the same parameters. Recall that a finitely generated group has only finitely many normal subgroups of any given finite index (see \cite[Theorem 7.2.9]{mhall}). Since $\cap_i K_i=1$, we conclude that $H$ is finite, as desired. The proof is complete.
 \end{proof}

We are now ready to prove Theorem \ref{variety1}: For positive integers $j,q$ and  any multilinear commutator word $w$,  the class $\frak X= \frak X(j,q,w)$ is a variety.
\begin{proof}[Proof of Theorem \ref{variety1}]
Arguing as in the proof of Proposition \ref{delta_case} it is easy to see that the class $\frak X(j,q,w)$ is closed with respect to taking subgroups  and quotients of its members. 

Thus, we only need to show that if $D$ is a Cartesian product of groups from $\frak X$, then $D$ itself belongs to $\frak X$. The group $D$ obviously satisfies the identity $w^q\equiv1$.  In view of Proposition \ref{cartesian_w} it is sufficient to show that $w(D)$ is locally finite. By Lemma \ref{shum2000_lemma41} there exists a number $t$ such that the identity $\delta_t^q \equiv 1$ holds in any group that belongs to $\frak X(j,q,w)$. Moreover if $G\in \frak X(j,q,w)$, then $G^{(t)}\leq w(G)$ and so any locally soluble subgroup $K$ of $G^{(t)}$ satisfies the property $h(K)\leq j$. It follows from Proposition \ref{delta_case} that $D\in \frak X(j,q,\delta_t)$.  Hence $D^{(t)}$ is locally finite and by applying Lemma \ref{shum2000_lemma42} to the quotient $D/D^{(t)}$ we obtain that $w(D/D^{(t)})$ is locally finite. Hence, $w(D)$ is locally finite as well. 
\end{proof}


\subsection{The variety $\frak Y(j,k,n,q)$}

 For any integers $j,q\geq 1$ and $k,n\geq 0$, let $\delta_k$ be the derived word and recall that $\frak Y=\frak Y(j,k,n,q)$ denotes  the class of all groups $G$ such that:\begin{enumerate}

\item $G$ satisfies the law $[\delta_k^q,{}_n\, y]\equiv1$;

\item The commutator subgroup $G^{(k)}$ is (locally nilpotent)-by-(locally finite);

\item  $h(K)\leq j$ for any locally soluble subgroup $K\leq G^{(k)}$. 
\end{enumerate}

Our goal in this subsection is to show that $\frak Y$ is a variety. 
The next results will be useful later on. 
\begin{theorem}\label{thmB} \cite[Theorem B]{STT2} Let $n$ be a positive integer and $v$ an arbitrary word. Then the class of all groups $G$ in which the $v$-values are right $n$-Engel and $v(G)$ is locally nilpotent is a variety.
\end{theorem}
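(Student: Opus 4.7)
The plan is to invoke Birkhoff's theorem, according to which a class of groups is a variety exactly when it is closed under taking subgroups, quotients, and unrestricted Cartesian products. I would dispose of the first two closure properties quickly and then concentrate on the Cartesian-product case, which is the delicate part.

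Closure under subgroups is immediate: if $H \leq G \in \mathcal{C}$, every $v$-value in $H$ is automatically a $v$-value in $G$ and hence right $n$-Engel in $G$, so the right $n$-Engel identity also holds in the smaller group $H$. Moreover $v(H) \leq v(G)$, and subgroups of locally nilpotent groups are locally nilpotent. Closure under quotients is just as easy, since the right $n$-Engel law on $v$-values is inherited by $G/N$ and $v(G/N) = v(G)N/N$ is a homomorphic image of the locally nilpotent $v(G)$.

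For Cartesian products, let $G = \prod_\lambda G_\lambda$ with each $G_\lambda$ in the class, and write $\pi_\lambda$ for the coordinate projection $G \to G_\lambda$. A $v$-value of $G$ projects componentwise to $v$-values of each $G_\lambda$, and the right $n$-Engel identity $[w, {}_n\, y] = 1$ can be verified coordinate by coordinate, so every $v$-value of $G$ is right $n$-Engel. The nontrivial task is to show that $v(G)$ is locally nilpotent. Given any finitely many $v$-values $w_1, \ldots, w_m$ of $G$, the projection $\pi_\lambda(\langle w_1, \ldots, w_m\rangle)$ lies in $v(G_\lambda)$, hence is nilpotent, and is generated by the $m$ right $n$-Engel elements $\pi_\lambda(w_1), \ldots, \pi_\lambda(w_m)$ of $G_\lambda$. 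If one can bound the nilpotency class of every such projection by a single function $c(m,n)$ independent of $\lambda$, then $\langle w_1, \ldots, w_m\rangle$ embeds into the Cartesian product of its projections and is itself nilpotent of class at most $c(m,n)$, which is what is required.

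The main obstacle is precisely the existence of such a class bound: I need the assertion that any nilpotent group generated by $m$ right $n$-Engel elements of some ambient overgroup has nilpotency class bounded by a function of $m$ and $n$ alone. My plan for this quantitative step is to invert the generators and use Heineken's observation in Lemma \ref{engel_LR} to turn the right $n$-Engel condition into a left $(n+1)$-Engel condition, then to apply the Lie-theoretic machinery from Section \ref{Lie_machinery} (in particular the ad-nilpotent criterion behind Theorem \ref{jpaa_thm24}) together with cyclic-extension arguments in the style of Lemma \ref{dms_lemma1}, building up the required bound by induction on the number of generators. Once $c(m,n)$ is secured, the Cartesian-product case follows as above and the three Birkhoff conditions are all verified.
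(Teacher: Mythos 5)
First, note that the paper does not prove Theorem \ref{thmB} at all: it is quoted verbatim from \cite[Theorem B]{STT2}, so the comparison here is with the standard route through that paper's Theorem A, which is also the route the present paper takes in the closely related Lemma \ref{similarto_STT_lemma41}. Your Birkhoff skeleton is fine: closure under subgroups and quotients is as easy as you say, the Engel condition passes to Cartesian products because ``all $v$-values are right $n$-Engel'' is the law $[v,{}_n\,y]\equiv 1$, and the reduction of local nilpotency of $v(G)$ to subgroups $\langle w_1,\dots,w_m\rangle$ generated by finitely many $v$-values, via the coordinate projections, is correct.

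The genuine gap is the quantitative assertion you lean on: that a nilpotent group generated by $m$ right $n$-Engel elements has class bounded by a function of $m$ and $n$ alone, with no identity assumed. This is not established by the tools you invoke, and it is precisely where the hypothesis of a law is essential. Theorem \ref{jpaa_thm24} requires the Lie algebra to satisfy a polynomial identity, and in this paper such an identity is only ever obtained from a group law via Proposition \ref{jpaa_lemma23}; in your formulation you have discarded the law, so no PI is available and Zelmanov-type nilpotency cannot be invoked. Moreover, Theorem \ref{jpaa_thm24} needs \emph{every} commutator in the generators to be ad-nilpotent of bounded index, whereas Heineken's Lemma \ref{engel_LR} only converts the right $n$-Engel condition on the generators themselves into a left $(n+1)$-Engel condition; commutators of right $n$-Engel elements need not be Engel of bounded degree, and the proposed induction in the style of Lemma \ref{dms_lemma1} does not produce this. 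Tellingly, the paper's Lemma \ref{similarto_STT_lemma41} is exactly the bound you want but carries the extra hypothesis that the group satisfies an identity, gives an $(m,n,v)$-bound rather than an $(m,n)$-bound, and even then is proved by an appeal to the deep Theorem A of \cite{STT2}, not by an elementary argument. The repair is available and cheap: every group in the class satisfies the fixed law $[v,{}_n\,y]\equiv 1$, so each projection $\pi_\lambda(\langle w_1,\dots,w_m\rangle)$ satisfies this identity; you may then either quote Lemma \ref{similarto_STT_lemma41} to get a class bound uniform in $\lambda$, or, more directly, note that $H=\langle w_1,\dots,w_m\rangle$ embeds into a product of finitely generated nilpotent groups, hence is residually finite, is finitely generated by right $n$-Engel elements and satisfies the identity, so Theorem A of \cite{STT2} gives nilpotency of $H$ outright, with no uniform bound needed. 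As written, however, your central claim is unproved (and, without a law, is not known), so the proof is incomplete.
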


\begin{lemma}\label{similarto_STT_lemma41}
Let  $m,n\geq 1$ be  positive integers. Let $G$ be a nilpotent group generated by $m$ elements that are right $n$-Engel. Suppose that $G$ satisfies an identity $v\equiv 1$. Then the nilpotency class of $G$ is  $(m,n,v)$-bounded.
\end{lemma}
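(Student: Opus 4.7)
My plan is to apply the Lie-theoretic machinery of Section \ref{Lie_machinery}, in the spirit of Zelmanov's solution of the restricted Burnside problem. First, by Lemma \ref{engel_LR} the inverses $b_i=a_i^{-1}$ of the given generators are left $(n+1)$-Engel elements, and they generate the same group $G$; so I may assume $G$ is generated by $m$ left $(n+1)$-Engel elements $b_1,\dots,b_m$. A finitely generated nilpotent group is residually finite and its nilpotency class coincides with the supremum of the classes of its finite quotients; since a finite nilpotent group splits as a direct product of its Sylow subgroups and projection onto a direct factor preserves both the left $(n+1)$-Engel property of the $b_i$ and the law $v\equiv1$, it suffices to bound, uniformly in the prime $p$, the nilpotency class of a finite $p$-group $P$ that is $m$-generated by left $(n+1)$-Engel elements and satisfies $v\equiv1$.

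For such a $P$, consider the Lie ring $L_p(P)$ associated with the $p$-dimension series. From the identity $[y,{}_{n+1}b_i]=1$ in $P$ and the definition of the Lie bracket in $L_p(P)$ one deduces that $(\mathrm{ad}\,\tilde b_i)^{n+1}=0$, so each generator $\tilde b_i$ of $L_p(P)$ is ad-nilpotent of index at most $n+1$. By Proposition \ref{jpaa_lemma23}, the law $v\equiv1$ forces a non-zero multilinear Lie polynomial identity $f\equiv0$ on $L_p(P)$ whose form depends only on $p$ and $v$.

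The decisive point, and the main obstacle of the argument, is to verify the remaining hypothesis of Zelmanov's Theorem \ref{jpaa_thm24}: every Lie monomial in $\tilde b_1,\dots,\tilde b_m$ must be ad-nilpotent of index bounded in terms of $m,n,v$ alone. Group-theoretically, this amounts to showing that every iterated commutator in the $b_i$ is left Engel in $P$ with a uniformly bounded index, a property that does not follow from the left Engel condition on the generators without additional input. I would attack this by leveraging the interaction between the law $v\equiv1$ and the Engel generators, drawing on the techniques of \cite{STT2} that underlie Theorem \ref{thmB}; the aim is to propagate ad-nilpotency from the generators to all generated Lie monomials in the presence of the polynomial identity $f\equiv0$. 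Granting this step, Theorem \ref{jpaa_thm24} yields that $L_p(P)$ is nilpotent of $(m,n,v)$-bounded class $c$, which in turn bounds the nilpotency class of the finite $p$-group $P$ by $c$. Since this bound is independent of $p$ and of the chosen finite quotient, the nilpotency class of $G$ is $(m,n,v)$-bounded as required.
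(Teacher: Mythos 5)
Your proposal does not close the argument: the step you yourself single out as ``the decisive point'' --- that \emph{every} Lie monomial in $\tilde b_1,\dots,\tilde b_m$ is ad-nilpotent of $(m,n,v)$-bounded index, which is an indispensable hypothesis of Theorem \ref{jpaa_thm24} --- is left entirely unproved (``Granting this step\,\dots''). This is precisely the hard part: the Engel condition on the generators gives ad-nilpotency of the generators only, and no mechanism is offered for propagating it to arbitrary commutators in them; note that in the paper's own use of this machinery (Lemma \ref{basis}) the ad-nilpotency of all monomials comes from the commutator-closed set $X$ of elements of order dividing $q$, a structural feature that has no analogue here. There are two further problems even if that step were granted. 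First, the identity $f\equiv 0$ supplied by Proposition \ref{jpaa_lemma23} depends on the prime $p$ as well as on $v$, whereas your reduction requires a class bound that is uniform over \emph{all} primes occurring in finite quotients of $G$; you assert independence of $p$ without justification. Second, nilpotency of $L_p(P)$ of class $c$ does not by itself bound the nilpotency class of the finite $p$-group $P$ by $c$: the $p$-dimension series involves $p$-th powers, so vanishing of Lie products of weight $>c$ does not give $\gamma_{c+1}(P)=1$; the paper only ever extracts from such nilpotency a factorization into boundedly many cyclic subgroups (Lemma \ref{jpaa_lemma21}) and hence an order bound, never a class bound.

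For comparison, the paper's proof is a short compactness argument with no Lie theory at all: assuming the class is unbounded, one takes a sequence of counterexamples $G_i$ with classes tending to infinity, forms the ``diagonal'' subgroup $H=\langle y_1,\dots,y_m\rangle$ of the Cartesian product whose $j$-th generator has $i$-th component $x_{ij}$, observes that $H$ is a finitely generated residually finite group satisfying $v\equiv 1$ and generated by $m$ right $n$-Engel elements, and invokes Theorem A of \cite{STT2} to conclude $H$ is nilpotent of some class $c$; since each $G_i$ is a quotient of $H$, all classes are at most $c$, a contradiction. If you want to salvage a Lie-theoretic route you would essentially have to reprove the results of \cite{STT2}, which is where the propagation of ad-nilpotency you are missing is actually carried out.
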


\begin{proof}
Suppose that the result is false. Then there exists an infinite sequence $\{G_i, i\geq 1\}$ of nilpotent groups satisfying the hypotheses of the lemma such that the nilpotency class of $G_i$ tends to infinity as $i$ does. Each $G_i$ is generated by $m$ elements, say,  $x_{i1},\ldots,x_{im}$, which are right $n$-Engel. Let $C$ be the Cartesian product of the groups $G_i$. For any $i\geq 1$ and $1\leq j \leq m$, let $y_j$ be the element of $C$ whose $i$th component is equal to $x_{ij}$.  Put $H=\langle y_1,\ldots, y_m\rangle$. It is well known that a finitely generated nilpotent group is residually finite (see \cite[Section 5.4]{Rob}). Since $H$ is residually nilpotent and finitely generated, we observe that $H$ is residually finite and satisfies the identity $v\equiv 1$. Moreover, $H$ is generated by $m$ elements which are right $n$-Engel. By \cite[Theorem A]{STT2} the group $H$ is nilpotent, say of class $c$, and so each $G_i$ is nilpotent of class at most $c$, a contradiction. This completes the proof.
\end{proof}

We will require the following  result  taken from \cite{STT1}.
\begin{proposition}\label{STT_prop35}\cite[Proposition 3.5]{STT1} Let $G$ be a group with an ascending normal series whose quotients are either locally soluble or locally finite. Then the Engel elements of $G$ lie in $F(G)$.
\end{proposition}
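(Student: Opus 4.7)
The plan is a transfinite induction on the length $\lambda$ of the given ascending normal series $\{N_\alpha\}_{\alpha\leq\lambda}$, establishing the stronger statement that every left Engel element of $G$ lying in $N_\alpha$ is contained in $F(G)$. The case $\alpha=0$ is trivial; for a limit ordinal $\mu$, any element of $N_\mu=\bigcup_{\beta<\mu}N_\beta$ already lies in some $N_\beta$ with $\beta<\mu$, and the inductive hypothesis applies.

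The substantive case is the successor step. Assuming the claim at $\alpha$, let $g$ be a left Engel element of $G$ with $g\in N_{\alpha+1}$. To conclude $g\in F(G)$, it suffices to show that the normal closure $g^G$ is locally nilpotent, equivalently that every finitely generated subgroup of $g^G$ is nilpotent. Such a subgroup has the form $H=\langle g^{y_1},\dots,g^{y_m}\rangle$, and each generator $g^{y_i}$ remains a left Engel element of $G$. Reducing modulo $N_\alpha$, the images $\overline{g^{y_i}}$ are left Engel elements of the quotient $N_{\alpha+1}/N_\alpha$, which by hypothesis is locally soluble or locally finite. By Gruenberg's theorem in the locally soluble case, and by the Baer--Plotkin theorem (a local consequence of Theorem \ref{engel_in_fitting}) in the locally finite case, these images lie in the Hirsch--Plotkin radical of $N_{\alpha+1}/N_\alpha$. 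Hence $HN_\alpha/N_\alpha$ is a finitely generated subgroup of a locally nilpotent group, and so it is nilpotent.

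The main obstacle is to conclude that $H$ itself is nilpotent, which comes down to controlling $H\cap N_\alpha$. The difficulty is that elements of $H\cap N_\alpha$ need not be Engel in $G$---they are products of conjugates of Engel elements, and the Engel property is not preserved under multiplication---so the inductive hypothesis cannot be applied element-wise. I would resolve this by strengthening the inductive assertion to a subgroup-level statement: prove simultaneously that, inside each $N_\alpha$, every finitely generated subgroup whose generators are left Engel elements of $G$ is nilpotent. Combined with a Frattini-type reduction in the spirit of Lemma \ref{frattini_finito}, this permits one to rewrite $H\cap N_\alpha$ via a controlled set of generators that genuinely are Engel in $G$, so that the strengthened inductive statement applies. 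Patching the nilpotency of $H\cap N_\alpha$ together with the nilpotent quotient $H/(H\cap N_\alpha)$---using that $H$ is finitely generated and the extension is controlled in class---then yields that $H$ is nilpotent, closing the successor step and completing the induction.
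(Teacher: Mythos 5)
First, a remark on scope: the paper does not prove this proposition at all --- it is imported verbatim from \cite[Proposition 3.5]{STT1} --- so there is no in-paper argument to compare yours with, and your attempt has to be judged on its own merits. The outer frame of your induction is sound: transfinite induction along the series; conjugates of a left Engel element are left Engel; and in a factor that is locally soluble (Gruenberg's theorem applied to the finitely generated, hence soluble, subgroups) or locally finite (Theorem \ref{engel_in_fitting} applied locally) the left Engel elements do lie in the Hirsch--Plotkin radical, so $HN_\alpha/N_\alpha$ is nilpotent. (Strictly speaking you treat only left Engel elements; the statement also covers right Engel elements, which require the standard Heineken-type reduction as in Lemma \ref{engel_LR}.)

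The genuine gap is the successor step, i.e.\ exactly the point you flag, and your proposed repair does not work as described. A Frattini-type reduction ``in the spirit of Lemma \ref{frattini_finito}'' gives no purchase here: that lemma produces a supplement $H$ to a normal subgroup of a finite group with $H\cap N\leq\Phi(H)$; it does not, in any setting, exhibit generators of $H\cap N_\alpha$ that are Engel in $G$. Elements of $H\cap N_\alpha$ are arbitrary words in the conjugates $g^{y_1},\dots,g^{y_m}$, and $H\cap N_\alpha$ need not even be finitely generated, so your strengthened inductive statement (about finitely generated subgroups generated by Engel elements of $G$) cannot be applied to it; worse, the successor step of that strengthened induction runs into the identical obstruction, so the strengthening by itself buys nothing. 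Finally, the ``patching'' step is false as stated: nilpotency of $H\cap N_\alpha$ and of $H/(H\cap N_\alpha)$ does not imply nilpotency of $H$, even for finitely generated groups --- the infinite dihedral group is (infinite cyclic)-by-(order $2$), and $S_3$ is abelian-by-abelian, yet neither is nilpotent; ``the extension is controlled in class'' is not an argument. What is needed is precisely to exploit the Engel condition of the generators in their action on $H\cap N_\alpha$ (via a Hall-type nilpotency criterion or the Baer--Gruenberg--Plotkin machinery of Engel and Baer radicals), and that is the actual content of the proposition; as written, your text is an outline with this hard core missing rather than a proof.
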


We remark that the above proposition applies to both left and right Engel elements.

In what follows we will show that the class $\frak Y=\frak Y(j,k,n,q)$ is closed under taking Cartesian products of its members. In the remaining part of this subsection the word $\delta_k$ will be denoted by the symbol  $u$.
\begin{lemma}\label{loc_nilpotent_w^q(G)}
If $G\in\frak Y$, then $u^q(G)$ is locally nilpotent.
\end{lemma}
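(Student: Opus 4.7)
The plan is to exhibit every $\delta_k^q$-value of $G$ as a right Engel element of $G^{(k)}$ and, via Proposition \ref{STT_prop35}, to place it inside the Hirsch--Plotkin radical $F(G^{(k)})$. The local nilpotence of $u^q(G)$ then follows at once, since the $\delta_k^q$-values generate $u^q(G)$.

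In detail, I first observe that every $\delta_k^q$-value is a $q$-th power of a $\delta_k$-value, so $u^q(G)\leq G^{(k)}$. By hypothesis~(2), $G^{(k)}$ has a normal series $1\leq N\leq G^{(k)}$ with $N$ locally nilpotent and $G^{(k)}/N$ locally finite; both factors are locally soluble or locally finite, so Proposition \ref{STT_prop35} applies to $G^{(k)}$ and tells us that every Engel element of $G^{(k)}$ lies in $F(G^{(k)})$.

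Next, hypothesis~(1) says that each $\delta_k^q$-value $x$ satisfies $[x,{}_n\,y]=1$ for every $y\in G$, hence in particular for every $y\in G^{(k)}$. Thus $x$ is a right $n$-Engel element of $G^{(k)}$ and, a fortiori, a right Engel element. The remark following Proposition \ref{STT_prop35} tells us that the proposition covers right Engel elements, so $x\in F(G^{(k)})$. Since the $\delta_k^q$-values generate $u^q(G)$, we conclude that $u^q(G)\leq F(G^{(k)})$, which is locally nilpotent; therefore $u^q(G)$ is locally nilpotent as well.

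There is no serious obstacle here. The main thing to watch for is applying Proposition \ref{STT_prop35} inside $G^{(k)}$, where hypothesis~(2) supplies the required ascending normal series, rather than inside $G$, whose global structure is not given. It is also worth noting that hypothesis~(3) on Fitting height plays no role in this particular lemma; it will presumably enter only at a subsequent step, when one controls the structure of $u(G)/u^q(G)$ or establishes closure of $\frak Y$ under Cartesian products.
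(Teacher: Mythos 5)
Your proof is correct and follows essentially the same route as the paper: the law gives that the $u^q$-values are right $n$-Engel, and Proposition \ref{STT_prop35} (together with the remark that it covers right Engel elements) places them in a Hirsch--Plotkin radical, so $u^q(G)$ is locally nilpotent. The only cosmetic difference is that you apply the proposition inside $G^{(k)}$, whereas the paper applies it to the whole group, using that $G$ is (locally nilpotent)-by-(locally finite)-by-soluble; both are valid.
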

\begin{proof}
If $G\in\frak Y$, then $G^{(k)}$ is (locally nilpotent)-by-(locally finite) and the $u^q$-values in $G$ are right $n$-Engel elements. By Proposition \ref{STT_prop35} the $u^q$-values in $G$ are in the Hirsch-Plotkin radical, and so $u^q(G)$ is locally nilpotent, as desired.
\end{proof}

\begin{lemma}\label{fitting} Let $G\in\frak Y$ and $K=\langle a_1,\dots,a_s\rangle$, where $a_i$ are $u$-values in $G$. Let $L$ be the subgroup generated by all $u^q$-values of $G$ contained in $K$. Then $L$ is nilpotent of $(u,q,n,s,j)$-bounded class. \end{lemma}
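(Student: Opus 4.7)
The plan is to proceed in three stages: showing $L$ is nilpotent, bounding $[K:L]$ by invoking Lemma \ref{basis} on finite quotients of $K/L$, and applying Lemma \ref{similarto_STT_lemma41} after producing a bounded family of right $n$-Engel generators of $L$.

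For the first stage, I would note that each $u^q$-value is right $n$-Engel by the defining law of $\frak Y$, hence Engel. Since $G$ has the ascending normal series $1\trianglelefteq N\trianglelefteq G^{(k)}\trianglelefteq G$ with $N$ locally nilpotent, $G^{(k)}/N$ locally finite, and $G/G^{(k)}$ soluble, whose quotients are locally soluble or locally finite, Proposition \ref{STT_prop35} places every $u^q$-value in the Hirsch--Plotkin radical $F(G)$; thus $L\leq F(G)$ is locally nilpotent. Since $K\leq G^{(k)}$ is finitely generated and $K\cap N$ is a finitely generated locally nilpotent (hence nilpotent) normal subgroup of finite index in $K$, the group $K$ is polycyclic-by-finite, so $L$ is finitely generated and therefore nilpotent.

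For the second stage, the quotient $K/L$ is generated by the $s$ images of the $a_i$ and satisfies $\delta_k^q\equiv1$, because every $u^q$-value of $K$ lies in $L$. As a polycyclic-by-finite group, $K/L$ is residually finite, so it suffices to bound uniformly the orders of the finite quotients of $K/L$. Soluble subgroups of $K/L$ themselves have Fitting height at most $j$: the full preimage $S$ in $K$ of a soluble $\bar S\leq K/L$ is soluble (as $L$ is nilpotent and $S/L=\bar S$ is soluble), and property (3) of $\frak Y$ applies to $S\leq G^{(k)}$. For a general finite quotient $\bar Q=(K/L)/\bar U$, I would pass to the intermediate finite group $\bar K=(K/L)/\bar U'$ with $\bar U'=\bar U\cap \bar N_0$, where $\bar N_0$ is the nilpotent normal subgroup of finite index in $K/L$. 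Since $\bar U'$ is nilpotent, soluble subgroups of $\bar K$ lift to soluble subgroups of $K/L$ and have Fitting height at most $j$. Lemma \ref{classY}(i) transfers this property to the quotient $\bar Q$ of $\bar K$, Lemma \ref{classY}(ii) bounds $h^*(\bar Q)$ in terms of $j$, and Lemma \ref{basis} (applied with $X$ the set of $\delta_k$-values of $\bar Q$) gives $|\bar Q|\leq B$ for some $(k,q,s,j)$-bounded constant $B$. Residual finiteness of $K/L$ then implies $|K/L|\leq B$.

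For the final stage, once $[K:L]\leq B$, I would choose a transversal $t_1,\dots,t_B$ of $L$ in $K$ and consider the set $Y=\{(a_i^{t_j})^q : 1\leq i\leq s,\, 1\leq j\leq B\}$ of $Bs$ many $u^q$-values. Each element of $Y$ is right $n$-Engel in $G$, and every $K$-conjugate of any $a_i^q$ is an $L$-conjugate of some element of $Y$, so the normal closure in $K$ of $\{a_1^q,\dots,a_s^q\}$ is the $L$-normal closure of $Y$. I would then argue that $L$ coincides with this normal closure modulo $L'$, and hence by the Frattini argument for nilpotent groups that $\langle Y\rangle=L$. Applying Lemma \ref{similarto_STT_lemma41} to $L$, viewed as a nilpotent group generated by $Bs$ right $n$-Engel elements and satisfying the identity $[u^q,{}_n y]\equiv1$, yields the desired $(u,q,n,s,j)$-bound on the nilpotency class. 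The main obstacle will be this last step, showing that $L$ is generated modulo $L'$ by the $K$-conjugates of $\{a_1^q,\dots,a_s^q\}$: this does not follow purely from $L$ being generated by $u^q$-values, and will require exploiting the nilpotent structure of $L$ inside $F(G)$ together with the finiteness of $K/L$.
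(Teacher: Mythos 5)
Your first two stages are essentially the paper's own argument. The paper likewise observes that $K$ is nilpotent-by-finite and $K/L$ residually finite, feeds the finite images of $K/L$ into Lemma \ref{basis}, and concludes that $[K:L]$ is bounded; your handling of the Fitting-height hypothesis (soluble subgroups of $K/L$ have bounded Fitting height because $L$ is nilpotent, then Lemma \ref{classY}) is a correct way to supply the $h^*$-bound that Lemma \ref{basis} needs. One small repair: the set $X$ you feed into Lemma \ref{basis} should be the image of the set of $u$-values of $G$ lying in $K$ (this set is normal in $K$, commutator-closed, generates, and its elements have order dividing $q$ modulo $L$), not the set of $\delta_k$-values of the abstract quotient $\bar Q$ — the images of the $a_i$ need not be $\delta_k$-values of $\bar Q$, so the latter set need not generate $\bar Q$.

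The genuine gap is in your third stage, and the step you flag as the "main obstacle" is not merely unproved: it is false. Take $k=0$ (so $u=\delta_0=x_1$), $q=2$, $n=1$ and $G=K=D_8=C_2\wr C_2$: all squares in $D_8$ are central, so the law $[x^2,y]\equiv 1$ holds and $G\in\frak Y(j,0,1,2)$; writing $G=\langle a_1,a_2\rangle$ with $a_1,a_2$ reflections of order $2$, every element of your set $Y$ is trivial (conjugates of reflections are reflections), and the normal closure of $\{a_1^2,a_2^2\}$ is trivial, while $L=\langle g^2:\ g\in G\rangle=Z(D_8)\neq 1$. So $L$ is not generated, even modulo $L'$, by $K$-conjugates of the $a_i^q$, and no Frattini-type argument can rescue this: the $u^q$-values of $G$ inside $K$ are in general far more plentiful than powers of conjugates of the chosen generators. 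The paper sidesteps the need for explicit $u^q$-value generators of $L$ as follows: once $[K:L]$ is bounded, $L$ is $m$-generated for some bounded $m$ (Schreier); since $K$ is residually finite and $L$ is finitely generated nilpotent, it suffices to bound the class of the image $\bar L$ of $L$ in finite quotients $\bar K$ of $K$; passing to $\bar K/O_{p'}(\bar K)$ one may assume $\bar L$ is a finite $p$-group, generated by the images of the $u^q$-values lying in $K$, and the Burnside Basis Theorem then allows one to select at most $m$ of these images as generators of $\bar L$; these are right $n$-Engel, so Lemma \ref{similarto_STT_lemma41} bounds the class of $\bar L$, hence of $L$. Replacing your stage three by this quotient-plus-Burnside-basis manoeuvre turns your outline into a complete proof.
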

\begin{proof} By Lemma \ref{loc_nilpotent_w^q(G)} $u^q(G)$ is locally nilpotent. Note that $G^{(k)}$ is (locally nilpotent)-by-(locally finite), since $G\in\frak Y$. It follows that $K$ is nilpotent-by-finite. 

Observe that $L$ is normal in $K$. Recall that a finitely generated nilpotent group is residually finite. Therefore $K/L$ is residually finite.  Let $X$ be the set of all $u$-values of $G$ contained in $K$. Observe that $K=\langle X\rangle$ and $X$ is a commutator-closed set.  In view of Lemma \ref{basis}, finite images of $K/L$ have order bounded in terms of $u,q,s,n$ and $j$. Since $K/L$ has only boundedly many normal subgroups of any fixed finite index, we conclude that  $L$ has bounded index in $K$ and so $L$ is $m$-generated, for some bounded number $m$. Thus $L$ is nilpotent, being contained in $u^q(G)$.  Since $K$ is residually finite, observe that  the nilpotency class of $L$ is bounded if and only if so is the nilpotency class of the image $\bar L$ of $L$ in a finite quotient $\bar K$ of $K$.  Therefore it is sufficient to bound the nilpotency class of the Sylow $p$-subgroup of $\bar L$ for a prime divisor $p$ of the order of $\bar L$.

 We can pass to the quotient  $\bar K/O_{p'}(\bar K)$ and assume that $\bar L$ is a $p$-group. Recall that $L$ is  $m$-generator and  a generating set consists  of $u^q$-values  of $G$ contained in $K$. Combining this with the Burnside Basis Theorem \cite[5.3.2]{Rob} we deduce that $\bar L$ is generated by the images of at most $m$ $u^q$-values. Since $u^q$-values in $G$ are right $n$-Engel,   in view of Lemma \ref{similarto_STT_lemma41}, the nilpotency class of $\bar{L}$ is bounded. The result follows.\end{proof}

\begin{lemma}\label{engel} Let $G$, $K$, and $L$ be as in Lemma \ref{fitting}. Choose  $b\in K$. Then $L\langle b\rangle$ is nilpotent of bounded class.
\end{lemma}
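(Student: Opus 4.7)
The plan is to apply Lemma \ref{dms_lemma1} directly, with $U=L$ and $t=b$. For this I need three things: that $L$ is normal in $L\langle b\rangle$, that $L$ is nilpotent of bounded class, and that $L$ has a generating set whose elements commute with $b$ in the $n$-Engel sense. The second is immediate from Lemma \ref{fitting}, which yields a $(u,q,n,s,j)$-bounded nilpotency class $c$ for $L$.

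For normality, I would observe that the set $Y$ of $u^q$-values of $G$ lying in $K$ is invariant under conjugation by any element of $K$: conjugation carries a $u^q$-value to a $u^q$-value, and since $Y\subseteq K$ and $K$ normalises itself, conjugating by $b\in K$ keeps the result in $K$. Hence $L=\langle Y\rangle$ is normal in $K$, and therefore in $L\langle b\rangle$. For the Engel condition, the hypothesis that $G\in \frak Y$ means that $G$ satisfies the law $[\delta_k^q,{}_n\,y]\equiv 1$, so every element of $Y$ is a right $n$-Engel element of $G$; in particular, $[y,{}_n\,b]=1$ for every $y\in Y$.

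With these three inputs in hand, Lemma \ref{dms_lemma1} applies and yields that $L\langle b\rangle$ is nilpotent of $(c,n)$-bounded class, which in turn is $(u,q,n,s,j)$-bounded as required. I do not foresee any genuine obstacle: the lemma is essentially a packaging result that combines the bounded nilpotency class from Lemma \ref{fitting} with the right Engel property built into membership in $\frak Y$, via the standard Engel-to-nilpotency statement of Lemma \ref{dms_lemma1}.
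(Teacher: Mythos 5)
Your proof is correct. You verify the three hypotheses of Lemma \ref{dms_lemma1} accurately: $L$ is normal in $K$ (the set of $u^q$-values of $G$ lying in $K$ is closed under conjugation by elements of $K$, a fact the paper itself records inside the proof of Lemma \ref{fitting}), $L$ is nilpotent of bounded class by Lemma \ref{fitting}, and the law $[\delta_k^q,{}_n\,y]\equiv 1$ makes every generator of $L$ right $n$-Engel, in particular $[y,{}_n\,b]=1$. The route differs from the paper's in which finishing tool is used. The paper does not invoke Lemma \ref{dms_lemma1} here; instead it appeals to Hall's nilpotency criterion, reducing the problem to bounding the class of $L\langle b\rangle/L'$, and then observes that the law forces $[L/L',{}_n\,b]=1$ --- the point being that on the abelianization the $n$-fold commutator map with $b$ is an endomorphism, so annihilating the generating images annihilates all of $L/L'$. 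Your argument instead feeds the generator-wise Engel condition directly into the prepackaged Lemma \ref{dms_lemma1} (which the paper states in the preliminaries and uses later, in the proof of Theorem \ref{main2}), avoiding any explicit passage to the abelianization. Both yield a class bounded in terms of the same parameters; yours is a more economical application of a tool already on hand, while the paper's is self-contained modulo Hall's classical criterion and makes the mechanism (the action on $L/L'$) explicit.
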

\begin{proof} By Lemma \ref{fitting} we know that $L$ has  bounded nilpotency class. In view of Hall's criterion for nilpotency \cite{hall} it is sufficient to bound the nilpotency class of $L\langle b\rangle/L'$. Consider the natural action of $b$ on $L/L'$. Since $G$ satisfies the law $[u^q,{}_n\,y]\equiv1$, it follows that $[L/L',{}_n\,b]=1$. This completes the proof.
\end{proof}

\begin{lemma}\label{bengel} Let $G\in\frak Y$ and $K=\langle a_1,\dots,a_s\rangle$, where $a_i$ are $u$-values in $G$. Let $H\leq K$ and let $b\in F(H)$. Then $b$ is $n_0$-Engel in $H$ for some bounded number $n_0$.
\end{lemma}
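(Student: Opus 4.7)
The plan is to use the nilpotent subgroup $L$ from Lemma~\ref{fitting} as a core inside $K$, intersect it with $H$ to obtain a bounded-index nilpotent normal subgroup $M\lhd H$, and then combine the nilpotent action of $b$ on $M$ (via Lemma~\ref{engel}) with a Baer-style argument in the finite quotient $H/M$.

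First, I would extract from the proof of Lemma~\ref{fitting} two facts that are established there but not recorded in the statement: that $L$ is normal in $K$ (immediate, since $u^q$-values of $G$ lying in $K$ are permuted by conjugation in $K$) and that the index $[K:L]$ is bounded in terms of $u,q,n,s,j$. Setting $M:=L\cap H$ then gives a normal subgroup $M\lhd H$ of bounded index, with $M\le L$ nilpotent of bounded class. Since $b\in F(H)\subseteq H\subseteq K$, Lemma~\ref{engel} applies to $b$, so $L\langle b\rangle$, and hence $M\langle b\rangle$, is nilpotent of some bounded class $c$. In particular $[m,{}_c\,b]=1$ for every $m\in M$.

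Next I would work in the finite quotient $\bar H:=H/M$, whose order is bounded. The image of the Hirsch-Plotkin radical $F(H)$ in $\bar H$ is normal and locally nilpotent, hence contained in $F(\bar H)$; so $\bar b:=bM\in F(\bar H)$. Since $\bar H$ is finite of bounded order, $F(\bar H)$ is nilpotent of bounded class $d$. Because $F(\bar H)$ is normal in $\bar H$, for any $\bar h\in\bar H$ the commutator $[\bar h,\bar b]$ already lies in $F(\bar H)$, and then $d$ further commutators with $\bar b$ in this nilpotent group give $[\bar h,{}_{d+1}\,\bar b]=1$. Lifting back, $[h,{}_{d+1}\,b]\in M$ for every $h\in H$. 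Combined with the previous step (where $b$ is $c$-Engel on $M$), iterating $c$ more times yields $[h,{}_{d+1+c}\,b]=1$, so $b$ is $n_0$-Engel in $H$ with $n_0=d+1+c$ bounded.

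The only delicate point is the implication $b\in F(H)\Rightarrow\bar b\in F(\bar H)$: a priori we only know that $b$ lies in the Hirsch-Plotkin radical of $H$, not that it is an Engel element of $H$, so I would justify it by the observation that homomorphic images of locally nilpotent normal subgroups are locally nilpotent and normal, and hence land inside the Hirsch-Plotkin radical of the quotient, which in the finite group $\bar H$ coincides with the Fitting subgroup. This is the main conceptual hurdle; everything else is a bookkeeping combination of Lemmas~\ref{fitting} and~\ref{engel}.
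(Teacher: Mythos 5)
Your argument is correct and follows essentially the same route as the paper: bounded index of $L$ in $K$ via Lemma \ref{basis} (as in Lemma \ref{fitting}), bounded nilpotency class of $L\langle b\rangle$ from Lemma \ref{engel}, and the observation that the image of $F(H)$ in the bounded finite quotient is nilpotent of bounded class, combined exactly as you do to get $[h,{}_{d+1+c}\,b]=1$. The only cosmetic difference is that you pass to $H/(L\cap H)$ while the paper works with the image of $F(H)$ in $K/L$; your write-up is a slightly more careful rendering of the same proof.
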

\begin{proof}  Let $L$ be the subgroup generated by all $u^q$-values of $G$ contained in $K$. Precisely  as in the proof of Lemma \ref{fitting} we are in a position to apply Lemma \ref{basis} and  so  $L$ has bounded index in $K$. Given $y\in H$, we have $[y,b]\in F(H)$. Observe that the image of $F(H)$ in the quotient $K/L$ is contained in the Fitting  subgroup of $K/L$, and so it has bounded nilpotency class (Obviously  the nilpotency class of $F(K/L)$ is less than the $|K/L|$). Combining this observation  with the fact that $L\langle b\rangle$ has bounded nilpotency class because of Lemma \ref{engel}, we deduce that $b$  is $n_0$-Engel in $H$ for some bounded number $n_0$, as desired.
 \end{proof}

Recall that a group is locally graded if every nontrivial finitely generated subgroup has a proper subgroup of finite index. The class of locally graded groups is quite large and in particular it contains all residually finite groups. The main result in \cite{LMS} states that if $G$ is a locally graded group and $N$ is a locally nilpotent normal subgroup of $G$, then $G/N$ is locally
graded as well. 

Finally we are ready to embark on the proof  of Theorem \ref{variety2}:  for any integers $j,q\geq 1$ and $k,n\geq 0$, the class  of groups  $\frak Y=\frak Y(j,k,n,q)$ is a variety.
\begin{proof}[Proof of Theorem \ref{variety2}]
Obviously, the class $\frak Y=\frak Y(j,k,n,q)$ is closed with respect to taking subgroups of its members. Let us show that $\frak Y$ is closed with respect to taking quotients.  Choose $G\in \frak Y$ and let $H$ be an epimorphic image of $G$.   The law $[u^q,{}_n\,y]\equiv 1$ holds in $G$, and the same is obviously true for $H$. Since $G\in \frak Y$, the subgroup $G^{(k)}$ is (locally nilpotent)-by-(locally finite) and  $H^{(k)}$ has the same structure.  We  also know that every locally soluble subgroup of $G^{(k)}$  has Fitting height at most $ j$. In view of Proposition \ref{closed_quotients} we conclude that whenever $K$ is a locally soluble subgroup of $H^{(k)}$ we have $h(K)\leq j$.

It remains to prove that  $\frak Y$ is closed under  taking Cartesian products. Let $D=\prod G_i$, where $G_i\in\frak Y$.  Obviously the law $[u^q,{}_n\,y]\equiv 1$ is satisfied in $D$. Note that if $G\in\frak Y$, then the $u^q$-values in $G$ are right $n$-Engel and, by Lemma \ref{loc_nilpotent_w^q(G)}, $u^q(G)$ is locally nilpotent. Thus, it follows from Theorem \ref{thmB} that $u^q(D)$ is locally nilpotent too. In order to show that $D^{(k)}$ is (locally nilpotent)-by-(locally finite) we will establish that $D^{(k)}/u^q(D)$ is locally finite.

Take a finite set  $S$ of elements in $D^{(k)}$ and observe that there exist finitely many $u$-values, say $a_1,\dots,a_r$ such that $S$ is contained in $K=\langle a_1,\dots,a_r \rangle$. We need to show that the image of $K$ in $D^{(k)}/u^q(D)$ is finite. Note that the projection of $K$ on every factor of $D$ is nilpotent-by-finite, because it is finitely generated and  contained in $G_i^{(k)}$ that is (locally nilpotent)-by-(locally finite). It follows that $K$ is residually (virtually nilpotent) and hence residually finite as well. Thus, any quotient of $K$ by a locally nilpotent normal subgroup is locally graded because of  the result in \cite{LMS}. Now let $L$ be the subgroup generated  by all $u^q$-values contained in $K$.  In particular the quotient group $T=K/L$ is locally graded. 

Choose any normal subgroup $N$ of finite index in $T$. In view of Lemma \ref{basis} the quotient group $T/N$ is finite of bounded order.  Let $N_0$ be the intersection of all the normal subgroups of finite index in $T$. Suppose that $N_0\neq1$. On the one hand, since $T$ is finitely generated, it has only finitely many subgroup of any given index. On the other hand, the previous argument shows that the index of any normal subgroup of finite index in $T$ is bounded. Combining these two facts we see that $N_0$ has finite index in $T$, too. This implies that $N_0$ is finitely generated and therefore contains a proper subgroup $M$ of finite index since $T$ is locally graded. Denote by $M_0$  the (normal) core of $M$ in $T$. It follows that $M_0$ is a normal subgroup of finite index in $T$ properly contained in  $N_0$. This is in contradiction with the definition of $N_0$, and so $M$ must be trivial.  Thus $T$ is finite and we  conclude that $D^{(k)}/u^q(D)$ is locally finite, as claimed. 

Next we need to show that if $H$ is a locally soluble subgroup of $D^{(k)}$, then we have $h(H)\leq j$. In view of Lemma \ref{enough_fg}  we can assume that $H$ is finitely generated.  Choose a subgroup $U$ generated by finitely many $u$-values such that $H\leq U$.

Observe that the image of $U$ in the quotient $D^{(k)}/u^q(D)$ is finite, since $U$ is finitely generated and $D^{(k)}/u^q(D)$ is locally finite. In what follows we argue as in the proof of Lemma \ref{enough_fg}. Let us denote by $F$ the Hirsch-Plotkin radical of $H$ and  for $t\geq 0$ set $$H_t^*=\{x\in H;\ xF\text{ is a $\delta_t^*$-commutator in $H/F$}\},$$ that is, $H_t^*$ is the set of elements that are $\delta_t^*$-commutators modulo $F$. To  show that $h(H)\leq j$,  it is sufficient to establish that $H_{j-1}^*\subseteq F$. Choose $b\in H_{j-1}^*$.  Let $U_i$ and $H_i$ be the projections of the subgroups $U$ and $H$ on $G_i$. For any $i\geq 1$ denote the projection  of $b$ in $H_i$ by $b_i$. Since the subgroup  $H_i$ is locally soluble and contained in $G_i^{(k)}$, we have $h(H_i)\leq j$ and so $h(H_i/F(H_i))\leq j-1$. Note that the quotient $H_i/F(H_i)$ is locally finite because $H_i \leq G_i^{(k)}$ and $u^q(H_i) \leq F(H_i)$. Similarly to the above, in each $H_i$ we consider subsets  $(H_i)^*_t$ for all $t\geq 0$.  Since any $\delta_{j-1}^*$-commutator in $H_i/F(H_i)$ is trivial, we have  $(H_i)_{j-1}^{*}\subseteq F(H_i)$. Observing that each $H_i$ is a homomorphic image of $H$ and that the image of a $\delta_{j-1}^*$-commutator is again a $\delta_{j-1}^*$-commutator, we see that $b_iF(H_i)$ is a $\delta_{j-1}^*$-commutator. Therefore each $b_i$ belongs to $(H_i)_{j-1}^{*}$ and so $b_i \in F(H_i)$.  It follows from Lemma \ref{bengel} that, for any $i$, the element $b_i$ is $n_0$-Engel in $H_i$ for some  number $n_0$, which does not depend on $i$. Hence $b$ is $n_0$-Engel in $H$ as well, and so $b\in F$, as desired. This implies the result.
\end{proof}



\section{On conciseness of words in residually finite groups} 

In this section we prove the main results on conciseness of words, that is, Theorems \ref{main1} and \ref{main2}. 
\subsection{Generalities}
In what follows we collect some preliminary observations on conciseness of words, which will be needed for the proofs of the main results.

\begin{lemma} \cite[Lemma 3.1]{AS14} \label{AS_lemma31} Let $v$ be any word and $G$ a group such that the set of  $v$-values in $G$  is finite. Then $v(G)$ is finite if and only if it is periodic. \end{lemma}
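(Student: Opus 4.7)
The ``only if'' direction is immediate, since a finite group is obviously periodic. For the converse, assume that the set $V$ of $v$-values in $G$ is finite and that $v(G)$ is periodic, and set out to prove that $v(G)$ is finite.

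The plan is to exploit the fact that $V$ is a normal subset of $G$: the set of $v$-values is closed under conjugation in $G$, so each $g \in V$ has only finitely many conjugates, and hence $C_G(g)$ has finite index in $G$ by the standard orbit-stabilizer argument. Writing $V = \{g_1, \ldots, g_s\}$, the intersection $C = \bigcap_{i=1}^{s} C_G(g_i)$ therefore has finite index in $G$. Since $C$ centralizes each $g_i$ and $v(G) = \langle g_1, \ldots, g_s\rangle$, the subgroup $C$ in fact centralizes all of $v(G)$.

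From this one obtains that $C \cap v(G) \subseteq Z(v(G))$, so the centre $Z(v(G))$ has finite index in $v(G)$, bounded by $[G:C]$. The next step is to apply Schur's theorem (\cite[14.5.1]{Rob}): a group whose centre has finite index has finite commutator subgroup. Hence $v(G)'$ is finite. Finally, the quotient $v(G)/v(G)'$ is generated by the finitely many images of the $v$-values, is abelian, and is periodic (since $v(G)$ itself is periodic by hypothesis); a finitely generated periodic abelian group is finite, so $v(G)/v(G)'$ is finite. Combining this with the finiteness of $v(G)'$, we conclude that $v(G)$ is finite.

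There is no real obstacle here: the proof is a textbook application of the centralizer-of-a-finite-conjugacy-class argument together with Schur's theorem. The only subtle point is ensuring that $v(G)$ is finitely generated as a group, which is guaranteed precisely because $V$ is finite and invariant under conjugation (so no further normal closure needs to be taken).
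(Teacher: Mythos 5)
Your argument is correct and complete: the $v$-values form a finite normal subset, so each has a finite conjugacy class, the intersection $C$ of their centralizers has finite index and centralizes $v(G)$, whence $Z(v(G))$ has finite index in $v(G)$; Schur's theorem then makes $v(G)'$ finite, and the abelianization $v(G)/v(G)'$ is finitely generated (by the images of the finitely many $v$-values), abelian and periodic, hence finite. The paper itself does not reprove this lemma but quotes it from \cite{AS14}, where the justification is essentially a one-line appeal to Dietzmann's Lemma: a finite normal subset consisting of elements of finite order generates a finite subgroup, which applies verbatim to the set of $v$-values once $v(G)$ is assumed periodic. So your route is genuinely different in presentation: you in effect reprove Dietzmann's Lemma in this special case via the centralizer-of-a-finite-class argument, Schur's theorem and the structure of finitely generated periodic abelian groups. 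What the citation route buys is brevity and no reliance on Schur; what your route buys is a self-contained elementary proof that makes the mechanism (finite index centre plus finitely generated periodic abelianization) explicit. One small stylistic remark: you only use periodicity of the $v$-values themselves (to make the abelianization torsion), not of all of $v(G)$, which slightly sharpens the statement but is of course implied by the hypothesis.
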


\begin{lemma}\cite[Lemma 4]{DMS2019}\label{r_abelian} Let $v$ be a word and $G$ a group such that the set of  $v$-values in $G$ is finite with at most $m$ elements. Then the order of $v(G)'$ is $m$-bounded.
\end{lemma}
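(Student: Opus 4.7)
The plan is to use the classical route through Schur's theorem on finiteness of the derived subgroup of a central-by-finite group. Since the set of $v$-values is closed under conjugation in $G$ (because $v(g_1,\ldots,g_k)^h = v(g_1^h,\ldots,g_k^h)$), the conjugation action of $G$ on the set $X$ of $v$-values yields a homomorphism from $G$ into $\mathrm{Sym}(X)$. Writing $K$ for its kernel, we have $[G:K]\le |X|!\le m!$.

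Now I would observe that $K$ centralizes every $v$-value by construction, and since $v(G)$ is generated by $X$, the intersection $K\cap v(G)$ lies in the centre $Z(v(G))$. On the other hand, $v(G)/(K\cap v(G))$ embeds into the finite quotient $G/K$, so
\[
\bigl|v(G)/Z(v(G))\bigr|\ \le\ \bigl|v(G)/(K\cap v(G))\bigr|\ \le\ [G:K]\ \le\ m!.
\]
Thus $v(G)$ is central-by-finite with central index bounded solely in terms of $m$.

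At this point I would invoke Schur's theorem: if $G/Z(G)$ is finite of order $n$, then $G'$ is finite of order bounded by a function of $n$ alone (explicit bounds are known, but any bound suffices here). Applying this to $v(G)$ with $n\le m!$ yields the $m$-bounded estimate on $|v(G)'|$.

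I don't anticipate a real obstacle: the only place where care is needed is the verification that the set of $v$-values is invariant under conjugation (which is immediate from the definition of $v$-values) and the clean identification $K\cap v(G)\le Z(v(G))$. Everything else is a direct quotation of Schur's theorem applied to $v(G)$.
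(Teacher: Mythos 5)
Your argument is correct and is essentially the proof given for this statement in the cited source \cite{DMS2019}: the paper itself only quotes the lemma, and the standard proof there likewise uses the conjugation action on the finite set of $v$-values to get $\lvert v(G)/Z(v(G))\rvert\le m!$ and then applies the quantitative (Wiegold-type) form of Schur's theorem. No gaps.
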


\begin{lemma} \cite[Corollary 2.7]{AS14} \label{AS_cor}Let $G$ be a soluble-by-finite group, $w$ a multilinear commutator word, and $q\geq 1$ an integer. Suppose that $G$ has only finitely many $w^q$-values. Then $w(G)$ has  finite exponent. 
\end{lemma}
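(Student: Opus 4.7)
The plan is to show first that $w^q(G)$ has finite exponent and then that the quotient $w(G)/w^q(G)$ has finite exponent; the two combine to give the claim.

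For the first step, write $V=\{v_1,\dots,v_s\}$ for the set of $w^q$-values in $G$. Since $V$ is closed under $G$-conjugation, every $v_i$ has $G$-conjugacy class of size at most $s$, so the subgroup $C:=\bigcap_{i=1}^{s}C_G(v_i)$ is normal of finite index in $G$ (bounded in terms of $s$) and $V\subseteq Z(C)$. Consequently $w^q(G)=\langle V\rangle$ lies in $Z(C)$, so $w^q(G)$ is abelian and finitely generated; this already supersedes Lemma \ref{r_abelian}. To upgrade this to finite exponent, I would use the soluble-by-finite hypothesis: fix a soluble normal subgroup $N$ of finite index in $G$ and set $C_0:=N\cap C$, which is still a soluble normal subgroup of finite index in $G$ centralizing $w^q(G)$. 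For any $w$-value $u$ of $G$, the element $u^q$ lies in $Z(C_0)$, so $u$ acts on $C_0$ as an automorphism whose $q$-th power is trivial. Running an induction along the derived series of $C_0$ and applying Lemma \ref{dms_lemma2} to each abelian quotient, I expect to deduce that a bounded $q$-power of $u$ acts trivially on $C_0$. Combined with the finiteness of $G/C_0$, this should force every generator $v_i$ of $w^q(G)$ to have finite order, so the finitely generated abelian group $w^q(G)$ has finite exponent.

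For the second step, consider $\bar G:=G/w^q(G)$. The quotient is still soluble-by-finite, and every $w$-value of $\bar G$ has order dividing $q$. Applying Lemma \ref{shum2000_lemma42} to a soluble normal subgroup of finite index in $\bar G$ gives local finiteness of the relevant verbal subgroup; a Hall–Zelmanov-style argument for locally finite soluble-by-finite groups generated by elements of bounded order then upgrades local finiteness to finite exponent. Together with the finite exponent of $w^q(G)$ obtained above, this yields that $w(G)$ has finite exponent.

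The main obstacle is the middle of the first step: deducing, from the centrality of $u^q$ in a soluble normal subgroup of finite index in $G$, a uniform bound on the order of $u^q$ itself. The information that $u^q$ acts trivially on $C_0$ does not immediately imply that $u^q$ has finite order, and the inductive argument along the derived series of $C_0$ using Lemma \ref{dms_lemma2} must be carried out with care to keep the bounds uniform. This is where I expect the detailed commutator calculus of \cite{AS14} to be essential.
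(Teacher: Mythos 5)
There is a genuine gap, and in fact the step you yourself flag as the ``main obstacle'' is fatal to the whole first half. Two preliminary remarks: the claim $V\subseteq Z(C)$ already needs $V\subseteq C$, i.e.\ that the $w^q$-values commute with one another, which is not automatic (Lemma \ref{r_abelian} only makes $w^q(G)'$ finite, not trivial, so at best you may argue modulo a finite normal subgroup). More seriously, from the fact that $u^q$ centralizes a soluble normal subgroup $C_0$ of finite index you cannot conclude anything about the order of $u^q$: Casolo's Lemma \ref{dms_lemma2} constrains the \emph{action} of an automorphism on an abelian group, and here the action of $u^q$ on $C_0$ is already known to be trivial, so there is nothing left for that lemma to produce. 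An infinite-order central element centralizes every subgroup, so ``trivial action on a finite-index subgroup plus finite quotient'' can never force torsion. The finiteness of the set of $w^q$-values has to be used in an essentially different way (for instance through the periodicity criterion of Lemma \ref{AS_lemma31} together with commutator expansion of the multilinear word $w$ against an abelian normal subgroup, which is how \cite{AS14} proceeds, by induction on the derived length); your outline does not supply any such mechanism, so the assertion that $w^q(G)$ has finite exponent is unproved.

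The second step is then essentially circular. In $\bar G=G/w^q(G)$ every $w$-value has order dividing $q$, so the only $w^q$-value is trivial; proving that $w(\bar G)$ has finite exponent is therefore exactly the special case of the lemma you are trying to prove. Lemma \ref{shum2000_lemma42} gives only local finiteness, and the ``Hall--Zelmanov-style'' upgrade you invoke --- that a locally finite soluble-by-finite group generated by elements of bounded order has finite exponent --- is false in general: the locally dihedral $2$-group $C_{2^\infty}\rtimes C_2$ is metabelian, locally finite and generated by involutions, yet has unbounded exponent. (Zelmanov-type results run in the opposite direction, from bounded exponent to finiteness, so they cannot fill this gap either.) Passing from local finiteness to a bound on the exponent, for multilinear commutator words in soluble(-by-finite) groups, is precisely the content of the proof in \cite{AS14}; note that the present paper does not reprove the statement but imports it as \cite[Corollary 2.7]{AS14}, whose argument is an induction on the derived length with explicit commutator calculus, quite different from the centralizer/Engel-action route you propose.
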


We will also need the following observation.
\begin{lemma}\label{phi}
Let $G$ be a profinite group. Then $\pi(\Phi(G))\subseteq \pi(G/\Phi(G))$.
\end{lemma}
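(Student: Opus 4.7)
My plan is to prove the contrapositive: given a prime $p$ with $p\notin\pi(G/\Phi(G))$, I aim to show that $p\notin\pi(\Phi(G))$.

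The first ingredient is that, exactly as in the finite setting, the Frattini subgroup of a profinite group is pronilpotent (being the intersection of all maximal open subgroups, one can mimic the classical finite-group proof). Consequently $\Phi(G)$ decomposes as the Cartesian product of its Sylow pro-$q$ subgroups, and in particular the Sylow pro-$p$ subgroup $P$ of $\Phi(G)$ is characteristic in $\Phi(G)$ and hence normal in $G$. It therefore suffices to prove $P=1$.

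Next I would verify that $P$ is in fact a Sylow pro-$p$ subgroup of the whole of $G$. Any Sylow pro-$p$ subgroup $Q$ of $G$ has trivial image in the pro-$p'$ group $G/\Phi(G)$, so $Q\leq\Phi(G)$; by uniqueness of the Sylow pro-$p$ subgroup of the pronilpotent group $\Phi(G)$, we obtain $Q=P$. The quotient $G/P$ is then an extension of $\Phi(G)/P$, which is pro-$p'$ since $P$ is the full Sylow pro-$p$ of the pronilpotent $\Phi(G)$, by $G/\Phi(G)$, which is pro-$p'$ by assumption; hence $G/P$ is itself pro-$p'$.

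Finally, $P$ is a closed normal pro-$p$ subgroup of $G$ with pro-$p'$ quotient, so the profinite version of the Schur--Zassenhaus theorem, which is applicable here because $P$ is prosoluble, yields a closed complement $H$ of $P$ in $G$, that is $G=PH$ with $P\cap H=1$. Since $P\leq\Phi(G)$ consists of non-generators (a property which, through the correspondence between proper closed subgroups of $G$ and proper subgroups of finite continuous quotients, extends from the usual finite-group statement to closed subgroups of profinite groups), the equality $G=PH$ forces $G=H$, whence $P=1$, as required. The main subtlety in this argument is the appeal to the profinite Schur--Zassenhaus theorem, but the pro-$p$ (and hence prosoluble) nature of $P$ places us in precisely the setting where the theorem is known to hold.
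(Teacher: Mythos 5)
Your proof is correct and follows essentially the same route as the paper: both isolate the $p$-part of the pronilpotent Frattini subgroup, apply the profinite Schur--Zassenhaus theorem to produce a closed complement, and conclude via the non-generator property of $\Phi(G)$. The only difference is cosmetic: the paper passes to the quotient $G/O_{p'}(\Phi(G))$ and complements the (now pro-$p$) Frattini subgroup itself, whereas you complement the Sylow pro-$p$ subgroup $P$ of $\Phi(G)$ inside $G$ directly and spell out the final contradiction that the paper leaves implicit.
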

\begin{proof}
Assume that the result is false and let $p\in\pi(\Phi(G))$ while $p\notin \pi(G/\Phi(G))$.  Passing to the quotient $G/O_{p'}(\Phi(G))$ we can also assume that $\Phi(G)$ is a pro-$p$ group while $G/\Phi(G)$ is a pro-$p'$ group. By the profinite analog of the Schur-Zassenhaus Theorem (see \cite[Theorem 2.3.15]{rz}) we have $G=\Phi(G)K$, for some closed subgroup $K$ of $G$ such that $K\cap \Phi(G)=1$ and this yields a contradiction. 
\end{proof}

Note that Lemma \ref{frattini_finito} admits a profinite version: If $N$ is a normal subgroup of a profinite group $G$, then there exists a subgroup $H$ such that $G=HN$ and $H\cap N\leq \Phi(H)$ (see \cite[Lemma 2.8.15]{rz}). This will play an important role in the proof of the next proposition.

Given a word $w$, it is easy to see that the problem on conciseness of $w$ in the class of residually finite groups is equivalent to that in profinite groups. Indeed, suppose that $G$ is a residually finite group in which $w$ takes only finitely many values and note that $G$ naturally embeds into the profinite completion $\hat G$. Observe that $w$ takes only finitely many values in $\hat G$ and moreover $w(G)$ is finite if and only if $w(\hat G)$ is so. Thus, we may work with $\hat G$ rather than $G$.

We will require the following more precise observation.

\begin{proposition}\label{start} 
A word $w$ is concise in residually finite groups if and only if $w$ is concise in groups that are virtually pro-$p$.
\end{proposition}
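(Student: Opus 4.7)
The only if direction is immediate: every virtually pro-$p$ group is residually finite, so if $w$ is concise in the larger class it is concise in the smaller one.

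For the converse, suppose $w$ is concise in virtually pro-$p$ groups and let $G$ be a residually finite group in which $w$ takes only finitely many values; we want $V:=w(G)$ to be finite. By the remark preceding the proposition, we can replace $G$ by its profinite completion $\hat G$. Moreover, since the finitely many $w$-values are realised by finitely many tuples of elements, we may restrict to the (topologically finitely generated) closed subgroup generated by those tuples, and so assume that $G$ is a topologically finitely generated profinite group. By Lemma \ref{r_abelian}, $V'$ has finite order; modding it out we may assume $V$ is abelian, and therefore finitely generated abelian.

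The key construction is the following source of virtually pro-$p$ quotients of $G$. For every prime $p$ and every open normal subgroup $N$ of $G$, let $R_p(N)$ be the pro-$p$ residual of $N$; it is characteristic in $N$, and hence normal in $G$. The quotient $G/R_p(N)$ contains $N/R_p(N)$, which is pro-$p$, as an open normal subgroup of finite index $[G:N]$, so $G/R_p(N)$ is virtually pro-$p$. Being a quotient of $G$, it has only finitely many $w$-values, and so the hypothesis gives that
\[
w(G/R_p(N))\;\cong\;V/(V\cap R_p(N))
\]
is finite for every choice of $N$ and $p$.

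To upgrade this family of finitenesses to the finiteness of $V$ itself, one appeals to the profinite version of Lemma \ref{frattini_finito} applied to the pair $(G,\bar V)$: one obtains a closed subgroup $H\leq G$ with $G=H\bar V$ and $H\cap\bar V\leq\Phi(H)$. By Lemma \ref{phi}, $\pi(\Phi(H))\subseteq\pi(H/\Phi(H))$, so the primes that can appear in $H\cap\bar V$ are controlled by those in $H/\Phi(H)$. Since $V$ is abelian and normal in $G$, its closure $\bar V$ lies inside the pronilpotent (Fitting) radical of $G$, and thus decomposes as $\bar V=\prod_p \bar V_p$ into its Sylow pro-$p$ components, each preserved by $G$. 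Combining the Frattini--$\pi$ structural information with the individual finitenesses of $V/(V\cap R_p(N))$ across all $N$ and $p$ forces each $\bar V_p$ to be finite and only finitely many of them to be nontrivial, so $\bar V$ is finite and hence so is $V$. The main obstacle is precisely this last bridging step: by itself, the finiteness of $V/(V\cap R_p(N))$ for every $N$ only yields that $V$ is residually finite, and it is the interlock provided by the Frattini decomposition together with Lemma \ref{phi} that rules out both an infinite single Sylow $\bar V_p$ and an infinite collection of nontrivial Sylows.
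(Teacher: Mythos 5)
Your easy direction and your opening reductions (passing to the profinite completion, invoking Lemma \ref{r_abelian} to make $w(G)$ abelian) are fine, but the heart of the converse is missing. The quotients $G/R_p(N)$ are indeed virtually pro-$p$ with finitely many $w$-values, so each $V/(V\cap R_p(N))$ is finite; however, $V$ is generated by the $m$ $w$-values, hence is a finitely generated abelian group, and every such group already has an abundance of finite quotients. Thus this whole family of finiteness statements adds nothing to what you know, and, as you concede yourself, everything hinges on the ``bridging step'', which you assert rather than prove. The mechanism you propose for it does not work: applying the profinite version of Lemma \ref{frattini_finito} to the pair $(G,\bar V)$ gives $H$ with $H\cap\bar V\le\Phi(H)$, but $\bar V$ is not open, so $H/\Phi(H)$ need not be finite nor have finite prime spectrum, and Lemma \ref{phi} then yields no control on $\pi(\Phi(H))$; moreover nothing in this set-up relates $w(H)$ to $V$ (one only gets $w(H)\le H\cap\bar V\le\Phi(H)$), so the conciseness hypothesis is never applied to a group whose verbal subgroup actually captures $V$. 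In particular, nothing in your argument excludes the possibility $V\cong\mathbb{Z}$.

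The paper's proof chooses the normal subgroup differently: $N$ is an \emph{open} normal subgroup meeting the finite set $\{x_i^{-1}x_j\colon i<j\}$ trivially, so that $w$ takes exactly $m$ values in $G/N$. The profinite Frattini lemma applied to this $N$ gives $H$ with $G=NH$ and $H\cap N\le\Phi(H)$; then every $w$-value of $G$ is already a $w$-value of $H$, so $w(H)=w(G)$, and since $N$ is open, $\Phi(H)$ is open, $H/\Phi(H)$ is finite, and Lemma \ref{phi} gives that $\pi(\Phi(H))$ is finite while $\Phi(H)$ is pronilpotent. One then inducts on $|\pi(\Phi(H))|$: if only one prime occurs, $H$ is virtually pro-$p$ and the hypothesis applies directly to $H$; otherwise $\Phi(H)=P\times Q$ with $P$ a Sylow pro-$p$ subgroup and $Q$ its Hall complement, and $w(H)$ embeds into $w(H/P)\times w(H/Q)$, both finite by induction. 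The decisive point your proposal lacks is exactly this: the virtually pro-$p$ hypothesis must be brought to bear on a subgroup $H$ with $w(H)=w(G)$ (and on its quotients by the Sylow factors of $\Phi(H)$, which intersect trivially), not merely on quotients of $G$, because quotients alone cannot detect an infinite torsion-free part of $V$.
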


\begin{proof} Since any profinite group can be regarded as an abstract residually finite group, one implication is obvious. So  assume that the word $w$ is concise in  virtually pro-$p$ groups. In view of the above it is sufficient to show that $w$ is concise in profinite groups. Let $G$ be a profinite group in which $w$ takes only finitely many values, say $\{x_1,\ldots,x_m\}$. We want to show that $w(G)$ is finite. Choose an open normal subgroup $N$ such that $w$ takes exactly $m$ values in $G/N$. This is possible since it is enough to choose $N$ in such a way that it trivially intersects the finite set $\{x_i^{-1}x_j; 1\leq i<j\leq m\}$.  By  the profinite version of Lemma \ref{frattini_finito} there exists a subgroup  $H\leq G$  such that $G=NH$ and $H\cap N\leq\Phi(H)$. It is clear that $w$ takes exactly $m$ values in $G/N$ and therefore also in $H$. Hence, $w(H)=w(G)$. Since $H\cap N\leq\Phi(H)$, it follows that $H$ is virtually pronilpotent.  Moreover $\pi(H/\Phi(H))$ is finite  and so, by Lemma \ref{phi}, $\pi(\Phi(H))$ is finite as well.

Now we  argue by induction on $t=|\pi(\Phi(H))|$. If $t=1$, then $H$ is virtually pro-$p$, for some prime $p$. Thus by hypothesis $w(H)$ is finite, and we are done. Assume that $t\geq2$. If $p\in\pi(\Phi(H))$, observe that $\Phi(H)=P\times Q$, where $P$ is the Sylow $p$-subgroup of $\Phi(H)$ and $Q$ the Hall $p'$-subgroup. By induction both $w(H/P)$ and  $w(H/Q)$ are finite. We conclude that $w(H)$ is finite since it is isomorphic to a subgroup of the direct product of $w(H/P)$ and $w(H/Q)$. The result follows.
\end{proof}

\subsection{The word $w^q$}
In this subsection we will show  that, for any integer $q\geq 1$, the word $w^q$ is concise in residually finite groups, whenever $w$ is a multilinear commutator word.

\begin{proof}[Proof of Theorem \ref{main1}]  In view of Proposition \ref{start} it is sufficient to show that if $G$ is a virtually pro-$p$ group in which $v=w^q$ takes only finitely many values, then $v(G)$ is finite. Taking into account Lemma \ref{r_abelian} we pass to the quotient $G/v(G)'$ and assume that $v(G)$ is abelian. We can also assume, without loss of generality, that $v(G)$ is torsion-free since we can pass to the quotient over the finite normal subgroup generated by the elements of finite order in $v(G)$. Since $G$ is virtually pro-$p$, there is $j\geq 1$ bounding the Fitting height of the prosoluble subgroups of $G$. Indeed, since $G$ has a normal pro-$p$ subgroup of finite index, say $j$, any prosoluble subgroup $K$ of $G$  also has a normal pro-$p$ subgroup, say $P$, of finite index $j$. Then obviously the Fitting height of the finite group $K/P$ is smaller than $j$ and so $h(K)\leq j$, as claimed.

Write $\frak X=\frak X(j,q,w)$ for the variety handled in Theorem \ref{variety1}. Let $H$ be an open normal subgroup of $G$ containing no nontrivial $v$-values of $G$. Taking into account Lemma \ref{classY}(i) one observes that continuous finite images of $H$ belong to $\frak X$. It follows that $H$ is a pro-$\frak X$ group. So $H$ embeds into a Cartesian product of finite $\frak X$-groups. Since $\frak X$ is a variety, we conclude that, viewed as an abstract group, $H\in\frak X$.

From now on we treat $G$ as an abstract group. Observe that the verbal subgroup $w(H)$ is locally finite (because $H\in\frak X$) and so periodic. Pass to the quotient $G/w(H)$.  By Lemma \ref{shum2000_lemma41}  the image of $H$ in the quotient  $G/w(H)$ is soluble.  Thus   $G/w(H)$ is virtually soluble and  Lemma \ref{AS_cor} tells us that  $w(G/w(H))$ has finite exponent. Since any $v$-value in $G$ is an element of $w(G)$, we conclude  that  $v(G)$ is periodic. On the other hand, $v(G)$ is torsion-free. Hence, $v(G)=1$. This concludes the proof.  
\end{proof}


\subsection{The word $[w^q,{}_n\,y]$}
The goal of this subsection is to establish that the word $[w^q,{}_n\,y]$ is concise in residually finite groups, whenever $w$ is a multilinear commutator word.

The next result is immediate from \cite[Proposition 5.4]{DMS2020}.

\begin{proposition}\label{ggd_pro54} Let $n,q\geq1$ and suppose that $w= w(x_1,\ldots,x_k)$ is a multilinear commutator word and $v=[w^q,{}_n\,y]$. Let $G$ be a virtually soluble group in which $v$ takes only finitely many values. Then $v(G)$ is finite.
\end{proposition}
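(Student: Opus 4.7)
The plan is to follow a reduction strategy analogous to the one used in the proof of Theorem \ref{main1} above, appropriately adapted to the composite word $v=[w^q,{}_n y]$.

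First, Lemma \ref{r_abelian} implies that $v(G)'$ is finite and normal in $G$, so I can pass to the quotient $G/v(G)'$ and assume $v(G)$ is abelian. Since $v$ has only finitely many values and these values generate $v(G)$, the abelian group $v(G)$ is finitely generated; its torsion subgroup is therefore finite and may likewise be quotiented out, leaving $v(G)\cong\mathbb{Z}^r$ torsion-free. It then suffices to force $r=0$, i.e.\ to show $v(G)$ is periodic.

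Second, I would examine the quotient $\bar G=G/v(G)$. By the definition of $v$, for every $w^q$-value $u$ and every $y\in G$ we have $[u,{}_n y]\in v(G)$, so every $w^q$-value $\bar u$ of $\bar G$ is a right $n$-Engel element. By Lemma \ref{engel_LR} the inverses $\bar u^{-1}$ are left $(n+1)$-Engel, and since $\bar G$ is still virtually soluble, Proposition \ref{STT_prop35} places all such elements in the Hirsch-Plotkin radical of $\bar G$. Consequently $w^q(G)/v(G)$ is locally nilpotent, and $w^q(G)$ itself is an extension of the abelian $v(G)$ by a locally nilpotent group inside the soluble-by-finite ambient group $G$; in particular $w^q(G)$ is soluble-by-finite.

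Third, to extract a periodicity (exponent) bound on $v(G)$, I would invoke Casolo's Lemma \ref{dms_lemma2}. The conjugation action of $G$ on the abelian $v(G)$ factors through a finite quotient, because $G$ permutes the finite set of $v$-values, so every $y\in G$ acts on $v(G)$ as an automorphism of finite order $s$ for some fixed $s$. Combining this with the $n$-Engel-type information produced in the previous step, transported to the abelian group $v(G)$ itself, Lemma \ref{dms_lemma2} yields $[v(G)^{s^{n-1}},y]=1$ for every $y\in G$, which, together with the fact that $v(G)$ is generated as a $G$-module by finitely many elements, imposes a finite exponent on $v(G)$. Since $v(G)$ has already been reduced to the torsion-free case, this collapses $v(G)$ to the trivial group, completing the proof. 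The main obstacle is precisely this transfer step at the start of the third paragraph: the $n$-Engel condition produced by the definition of $v$ lives naturally in $G/v(G)$, whereas Casolo's lemma needs an Engel identity acting on $v(G)$ itself, and bridging this gap, as done in detail in \cite[Proposition 5.4]{DMS2020}, requires a careful commutator analysis that uses both the abelianness of $v(G)$ and the finite image of $G\to\mathrm{Aut}(v(G))$.
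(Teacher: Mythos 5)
First, a point of comparison: the paper does not actually prove Proposition \ref{ggd_pro54}; it is imported wholesale as ``immediate from \cite[Proposition 5.4]{DMS2020}''. Your proposal, at its decisive moment, does the same: you concede that transferring the Engel information to an Engel action on $v(G)$ itself ``requires a careful commutator analysis as done in \cite[Proposition 5.4]{DMS2020}''. So beyond the paper's citation nothing is actually established, and this is a genuine gap rather than a technicality. The hypothesis only yields $[u,{}_n\,y]\in v(G)$ for $w^q$-values $u$, i.e.\ Engel information \emph{modulo} $v(G)$; Casolo's Lemma \ref{dms_lemma2} needs $[v(G),{}_m\,y]=1$ for the automorphism induced by $y$ on the abelian group $v(G)$, and no step of your argument produces such a condition. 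Note that in the paper's related argument (the proof of Theorem \ref{main2}) the genuine right-Engel property of $w^q$-values is obtained from an open normal subgroup $H$ containing no nontrivial $v$-values, a device available in the virtually pro-$p$ setting but not in an arbitrary virtually soluble group, which need not be residually finite; so this transfer is precisely the hard content of \cite{DMS2020}, not a routine adaptation of Theorem \ref{main1}.

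Second, even granting that transfer, your concluding inference is incorrect. Lemma \ref{dms_lemma2} gives $[v(G)^{s^{n-1}},y]=1$, which, since $v(G)$ is abelian, says only that $[v(G),y]$ has exponent dividing $s^{n-1}$; torsion-freeness then gives $[v(G),y]=1$ for every $y$, i.e.\ $v(G)$ is central --- it does \emph{not} impose a finite exponent on $v(G)$, and finite generation does not help here. Centrality of a finitely generated torsion-free $v(G)$ does not finish the proof: one must still show that the individual $v$-values have finite order, which is exactly the role played by Lemmas \ref{dms_lemma3} and \ref{dms_lemma4} in the paper's proof of Theorem \ref{main2}, after centrality upgrades the $w^q$-values to right $(n+1)$-Engel elements. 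Your opening reductions (Lemma \ref{r_abelian}, passing to torsion-free $v(G)$, local nilpotency of $w^q(G)/v(G)$ via Proposition \ref{STT_prop35}) are correct, but they are the easy part; the two gaps above are where the proof actually lives.
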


The following lemmas are taken from \cite{DMS2020}. 
\begin{lemma}\cite[Lemma 2.12]{DMS2020}\label{dms_lemma3} Let $n,d,q$ be positive integers. Suppose that $w=w(x_1,\ldots,x_k)$ is a multilinear commutator word and $v=[w^q,{}_n\,y]$. Let $G$ be a residually finite group such that $v(G)$ is abelian. Let $g_1,\ldots, g_d$ be $w^q$-values which are right $(n+1)$-Engel. Then for every $t\in G$ the subgroup $\langle g_1,\ldots,g_d,t\rangle$ is nilpotent of $(d,n,w,q)$-bounded class.
\end{lemma}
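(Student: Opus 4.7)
The plan is to use residual finiteness to reduce to finite quotients of $H:=\langle g_1,\dots,g_d,t\rangle$, identify a normal nilpotent subgroup via the Heineken--Baer mechanism, bound its nilpotency class by Lie-theoretic methods, and then transfer the bound to $H$ via Lemma~\ref{dms_lemma1}.

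First I would observe that $H$ is $(d+1)$-generated and residually finite, so it suffices to bound the nilpotency class of every finite image of $H$ uniformly in $d,n,w,q$; hence one may assume $H$ itself is finite. Each $g_i$ is a right $(n+1)$-Engel element, so Heineken's Lemma~\ref{engel_LR} makes $g_i^{-1}$ left $(n+2)$-Engel, and Baer's Theorem~\ref{engel_in_fitting} places $g_i$ in $F(H)$. Hence the normal closure $N:=\langle g_1,\dots,g_d\rangle^H$ is contained in $F(H)$ and is nilpotent. Since $H/N$ is cyclic (generated by the image of $t$) and $(t-1)^{n+1}$ annihilates each $\overline{g_i}$ in $N/[N,N]$ (because $[g_i,{}_{n+1}t]=1$), the nilpotent group $N$ is in fact generated by the $d(n+1)$ elements $\{g_i^{t^j}:1\le i\le d,\ 0\le j\le n\}$.

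Next I would bound the nilpotency class $c$ of $N$ in terms of $d,n,w,q$. The hypothesis that $v(G)$ is abelian translates to the law $[[w^q,{}_n y_1],[w^q,{}_n y_2]]\equiv 1$; by Proposition~\ref{jpaa_lemma23} this yields, for each prime $p$ dividing $|N|$, a non-zero multilinear Lie identity on $L_p(N_p)$, where $N_p$ is the Sylow $p$-subgroup of $N$. Each chosen generator $g_i^{t^j}$ is right $(n+1)$-Engel, so via Heineken and Lemma~\ref{jpaa_lemma22} the corresponding Lie element in $L_p(N_p)$ is ad-nilpotent of index at most $n+2$. Zelmanov's theorem (Theorem~\ref{jpaa_thm24}) would then pin the class of $L_p(N_p)$, and hence of $N_p$, by a number depending only on $d,n,w,q$ and uniform in $p$; therefore $N$ itself has class at most some $c=c(d,n,w,q)$.

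Finally, with $H=N\langle t\rangle$, $N$ normal nilpotent of class $c$, and the generating set $Y=\{g_i^{t^j}\}$ of $N$ whose every element $y$ satisfies $[y,{}_{n+1}t]=1$, Lemma~\ref{dms_lemma1} applied with $n$ there being our $n+1$ delivers a $(c,n+1)$-, and hence $(d,n,w,q)$-bounded, nilpotency class for $H$. The main obstacle sits in the middle step: Theorem~\ref{jpaa_thm24} demands that \emph{every} Lie monomial in the chosen generators, not only the generators themselves, be ad-nilpotent of bounded index. Overcoming this will likely require, in the spirit of the proof of Lemma~\ref{basis}, a Burnside-Basis-style reduction within each Sylow $p$-subgroup together with the observation that iterated commutators of the $g_i^{t^j}$ are either again $w^q$-values (inheriting the Engel property under conjugation) or already lie inside the abelian $v(H)$.
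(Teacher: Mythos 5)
This lemma is not proved in the paper at all: it is imported verbatim from \cite[Lemma 2.12]{DMS2020}, so there is no internal argument to compare with, and your proposal must be judged on its own. Its skeleton is reasonable (reduction to finite quotients of $H$, Heineken--Baer to place the $g_i$ in the Fitting subgroup, the $d(n+1)$ generators $g_i^{t^j}$ of the normal closure $N$ obtained from $(t-1)^{n+1}$ acting trivially on $N/N'$, and Lemma~\ref{dms_lemma1} at the end -- these steps are sound, since ``$v(G)$ abelian'' is the law $[[w^q,{}_n\,y],[w^q,{}_n\,z]]\equiv 1$ and the right $(n+1)$-Engel condition both pass to quotients of $H$). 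But the proof has a genuine gap exactly where you flag it, and the patch you sketch does not work. Theorem~\ref{jpaa_thm24} needs \emph{every} Lie commutator in the chosen generators to be ad-nilpotent of bounded index, and your suggestion -- that iterated group commutators of the $g_i^{t^j}$ are again $w^q$-values ``inheriting the Engel property'', or else lie in the abelian subgroup $v(H)$ -- fails: unlike the set of $\delta_k$-values exploited in Lemma~\ref{basis}, the set of $w^q$-values is not commutator-closed; even when a commutator happens to be a $w^q$-value, the right $(n+1)$-Engel property is a hypothesis on the specific elements $g_1,\dots,g_d$ and their conjugates, not a property of arbitrary $w^q$-values when $v(G)\neq1$ (a $w^q$-value $a$ only satisfies $[a,{}_n\,y]\in v(G)$); and a commutator such as $[g_1,g_2]$ is in general not a $v$-value unless $n=1$. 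Nor can you fall back on bounded orders as Lemma~\ref{basis} does: in the intended application (the proof of Theorem~\ref{main2}) $v(G)$ is torsion-free, so no exponent bound is available; moreover, once you pass to a finite quotient of $H$ alone, the images of the $g_i$ need not be $w^q$-values of any relevant ambient group, so the patch is not even well posed there. Bounding the class of $N$ (equivalently, of its Sylow subgroups) is the heart of the lemma and remains unproved.

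A secondary miscitation: Lemma~\ref{jpaa_lemma22} concerns elements of $p$-power order and says nothing about Engel elements, so ``Heineken plus Lemma~\ref{jpaa_lemma22}'' does not yield ad-nilpotency of the generators. What is needed (and is true, but is not among the tools stated in this paper) is the separate standard fact that an element $a$ with $[x,{}_m\,a]=1$ for all $x$ has ad-nilpotent image of index at most $m$ in the associated Lie algebra; combined with Lemma~\ref{engel_LR} this gives index at most $n+2$ for the $\tilde g_i^{t^j}$, but it handles only the generators, not the monomials required by Theorem~\ref{jpaa_thm24}.
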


\begin{lemma}\cite[Corollary 2.15]{DMS2020}\label{dms_lemma4} Let $m,n,c$ be positive integers and $G$ a nilpotent group of class at most $c$. Let $g\in G$ and denote by $Y$ the set of conjugates of elements of the form $[g,{}_n\,x]$, where $x\in G$.  Assume that $Y$ is finite with at most $m$ elements. Then each element in $Y$ has finite $(c,m)$-bounded order.
\end{lemma}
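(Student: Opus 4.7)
The plan is to proceed by induction on the nilpotency class $c$, with $m$ and $n$ held fixed. For the base case $c=1$ the group $G$ is abelian and, since $n\geq 1$, every commutator $[g,{}_n x]$ is trivial, so $Y=\{1\}$ and the conclusion is immediate.

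For the inductive step, let $Z=Z(G)$, so that $\bar G=G/Z$ is nilpotent of class at most $c-1$. The image $\bar Y$ of $Y$ in $\bar G$ is again a normal set with at most $m$ elements, consisting of conjugates of $[\bar g,{}_n \bar x]$. By the inductive hypothesis there is a $(c{-}1,m)$-bounded integer $M$ such that $\bar y^M=1$ for every $\bar y\in\bar Y$, and hence $y^M\in Z$ for every $y\in Y$. The task is thus reduced to bounding the order of the central element $y^M$.

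For this I would work inside the $G$-invariant subgroup $N=\langle Y\rangle$, which is generated by at most $m$ elements and nilpotent of class at most $c$. Because $Y$ is normal of size at most $m$, each $y\in Y$ has at most $m$ conjugates in $G$, so in particular $|N:C_N(y)|\leq m$; the common centralizer $T=\bigcap_{y\in Y}C_N(y)$ lies in $Z(N)$, giving $|N/Z(N)|\leq m^m$. Schur's theorem then bounds $|N'|$ in terms of $m$, and iterating the Baer-type result used in the proof of Lemma~\ref{basis} yields $m$-bounded estimates on every term $\gamma_k(N)$ of the lower central series of $N$.

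The main obstacle will be turning these structural bounds into a bound on $|y^M|$ itself. The difficulty is that $y=[g,{}_n x]^h$ is a commutator in $G$ rather than in $N$, so $y$ need not lie in $N'$, and one cannot immediately combine $y^M\in Z(N)$ with $|N'|$ bounded. I would address this by using standard commutator identities in a nilpotent group of class $c$ to rewrite $y^M$ as a product of iterated commutators of members of $Y$, which do live in $N'$ and higher terms $\gamma_k(N)$; together with $y^M\in Z$ this should place $y^M$ in a subgroup of $(c,m)$-bounded order. Justifying the rewriting inside a torsion-controlled subgroup, and in particular handling the free abelian part of $N/N'$ by exploiting that each $y$ sits in $\gamma_{n+1}(G)$, is the delicate part of the argument.
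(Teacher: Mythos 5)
The paper itself offers no proof of this lemma---it is imported verbatim from \cite{DMS2020}---so your argument has to stand on its own, and it does not: there is a genuine gap at precisely the point you label ``delicate''. Your reduction (induction on $c$ through $G/Z(G)$, giving $y^M\in Z(G)$ for a $(c-1,m)$-bounded $M$, together with $|N/Z(N)|\le m^m$ for $N=\langle Y\rangle$ and hence, by Schur, an $m$-bounded $|N'|$ and bounded $\gamma_k(N)$ for $k\ge 2$) never produces any torsion for $y$ itself. The group $N$ is generated by at most $m$ elements, so $N/N'$ may a priori be free abelian of rank up to $m$, and $y$ need not lie in $N'$; centrality of $y^M$ in $G$ is no obstruction whatsoever to $y^M$ having infinite order. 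The proposed repair---rewriting $y^M$ as a product of iterated commutators of members of $Y$---is unjustified and cannot work in general: any product of commutators of elements of $N$ lies in $N'$, so an element with nontrivial image in $N/N'$ admits no such rewriting, and mere membership $y\in\gamma_{n+1}(G)$ gives nothing, since elements of $\gamma_{n+1}(G)$ can perfectly well have infinite order.

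What is missing is any quantitative use of the hypothesis that the Engel values themselves form a set of at most $m$ elements as the second entry varies; this is the only possible source of torsion. For instance, the elements $[g,{}_n\,x^j]$ for $j=1,\dots,m+1$ all belong to $Y$, so two of them coincide, and coincidences of this kind, combined with commutator expansions of $[g,{}_n\,x^j]$ in a nilpotent group of class $c$ (or with a Casolo-type statement in the spirit of Lemma \ref{dms_lemma2}), are the sort of mechanism that forces $[g,{}_n\,x]$ to have $(c,m)$-bounded order. Your structural bounds on $N'$ and $N/Z(N)$ are correct but do not engage this hypothesis, so as written the order of $y$ is simply not bounded by your argument; supplying a step of the above nature is not a routine completion but the actual content of the lemma.
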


We are now ready to prove Theorem \ref{main2}: Let $w=w(x_1,\ldots,x_k)$ be a multilinear commutator word. For any integers $q\geq 1$ and $n\geq 0$ the word $v=[w^q,{}_n\,y]$ is concise in residually finite groups.

\begin{proof}[Proof of Theorem \ref{main2}] In view of Proposition \ref{start} it is sufficient to show that if $G$ is a virtually pro-$p$ group in which $v$ takes only finitely many values, then $v(G)$ is finite. Taking into account Lemma \ref{r_abelian} we pass to the quotient $G/v(G)'$ and assume that $v(G)$ is abelian. We can also assume without loss of generality that $v(G)$ is torsion-free since we can pass to the quotient over the finite normal subgroup generated by the elements of finite order in $v(G)$. As was shown in the proof of Theorem \ref{main1}, since $G$ is virtually pro-$p$, there is $j\geq1$ such that the prosoluble subgroups $K\leq G$ have the property $h(K)\leq j$. 

 Since $w$ is a multilinear commutator word on $k$ variables, by Lemma \ref{shum2000_lemma41} any $\delta_k$-value is a $w$-value. Let $\frak Y=\frak Y(j,k,n,q)$ be the variety as in Theorem \ref{variety2}. 

Let $H$ be an open normal subgroup of $G$ containing no nontrivial $v$-values of $G$. In view of Lemma \ref{classY}(i) continuous finite images of $H$ belong to $\frak Y$.  Therefore $H$ is a pro-$\frak Y$ group. Thus $H$ embeds into a Cartesian product of finite $\frak Y$-groups and, since $\frak Y$ is a variety,  the abstract commutator subgroup $H^{(k)}$ is (locally nilpotent)-by-(locally finite). In view of  Lemma \ref{loc_nilpotent_w^q(G)}  $\delta_k^q(H)$ is locally nilpotent.

We claim that the quotient $H^{(k)}/\delta_k^q(H)$  is locally finite. 
Indeed, choose finitely many $\delta_k$-values in $H$ and denote by $K$ the abstract subgroup generated by these elements. Note that $K$ is virtually nilpotent and so residually finite. Now let $L$ be the abstract subgroup generated  by all $\delta_k^q$-values of $G$ contained in $K$. Note that $L$ is locally nilpotent, being contained in $\delta_k^q(H)$. The quotient group $T=K/L$ need not be residually finite but, by the result in \cite{LMS}, is locally graded.  

Similarly to what was done in the proof of Theorem \ref{variety2}, choose any normal subgroup $N$ of finite index in $T$. In view of Lemma \ref{basis} the quotient group $T/N$ is finite of bounded order.  Let $N_0$ be the intersection of all normal subgroups of finite index in $T$. On one hand  $T$  has only finitely many subgroups of any given index. On the other hand, the index of any normal subgroup of finite index in $T$ is bounded. Thus  $N_0$ has finite index in $T$. It follows that $N_0$ is finitely generated. Suppose that $N_0\neq1$. Then $N_0$ contains a proper subgroup $M$ of finite index. Denote by $M_0$  the (normal) core of $M$ in $T$. Since $M_0$ is a normal subgroup of finite index in $T$ properly contained in  $N_0$, we get a contradiction.  Hence $M$ must be trivial and  $T$ is finite. This implies that $H^{(k)}$ is locally finite modulo $\delta_k^q(H)$, as claimed. 

Since $\delta_k^q(H)\leq w^q(H)$, the image of $H^{(k)}$ in $G/w^q(H)$  is locally finite. By Lemma \ref{shum2000_lemma42} the image of $w(H)$ in $G/H^{(k)}w^q(H)$ is locally finite. Therefore $w(H)$ is locally finite modulo $w^q(H)$. 

Since $H^{(k)}$ is (locally nilpotent)-by-(locally finite), the group $H$ is (locally nilpotent)-by-(locally finite)-by-soluble. Of course, $H$ satisfies the law $v\equiv 1$,  and so all $w^q$-values in $H$ are right $n$-Engel elements. By Proposition  \ref{STT_prop35} the Engel elements of $H$ form a locally nilpotent subgroup. In particular all $w^q$-values in $H$ belong to the Hirsch-Plotkin radical of $H$ and so $w^q(H)$ is locally nilpotent. Thus $w(H)$ is (locally nilpotent)-by-(locally finite).

 Note that  $H/w(H)$ is a soluble normal subgroup of finite index in $G/w(H)$. It follows from Proposition \ref{ggd_pro54} that $v(G/w(H))$ is finite. Thus, $v(G)\cap w(H)$ has finite index in $v(G)$. The local finiteness of $w(H)/w^q(H)$ combined with the fact that $v(G)$ is finitely generated implies that $v(G)\cap w^q(H)$ has finite index in $v(G)\cap w(H)$. Hence, $v(G)\cap w^q(H)$ has finite index in $v(G)$, and therefore  $v(G)\cap w^q(H)$ is finitely generated.

Choose $t\in G$.  It follows that the  subgroup $\langle w^q(H),t\rangle$ is locally nilpotent. Indeed, as shown above $w^q(H)$ is locally nilpotent and if $x$ is a $w^q$-value in $H$, then for any $y\in G$ we have $[x,_ny]=1$ because $H$ does not contain nontrivial $v$-values. Furthermore by \cite[Exercise 12.3.6]{Rob} the subgroup $\langle x\rangle^{\langle y\rangle}$ is generated by finitely many conjugates of $x$. In order to prove that $w^q(H)\langle t\rangle$ is locally nilpotent, choose a finite subset $X\subseteq w^q(H)\langle t\rangle$.  We need to show that $\langle X\rangle$ is nilpotent. If $X\subseteq w^q(H)$, then $\langle X\rangle$ is nilpotent because $w^q(H)$ is locally nilpotent. Assume now that $X\not\subseteq w^q(H)$. Then there is a finite subset $Y\subseteq w^q(H)$ and a positive integer $i$ such that $\langle X\rangle=\langle Y, t^i\rangle$. So it is sufficient to prove  that $\langle Y,t\rangle$ is nilpotent. Note that $Y$ is contained in a subgroup generated by finitely many $w^q$-values in $H$. Without loss of generality we can assume that $Y=\langle b_1,\dots,b_d\rangle$, where $b_1,\dots,b_d$ are $w^q$-values in $H$. The above observation on the subgroups $\langle x\rangle^{\langle y\rangle}$  implies that $t$ normalizes a subgroup, say $Y_0$, containing $Y$ which is generated by finitely many $w^q$-values in $H$. Since $Y_0$ is nilpotent, we are now in a position to apply Lemma 2.6 and deduce that $\langle Y,t\rangle$ is nilpotent. Hence $\langle w^q(H),t\rangle$ is locally nilpotent, as required.
Combining this with the fact that $v(G)\cap w^q(H)$ is finitely generated, we deduce that  there exists an integer $s$ such that $[v(G)\cap w^q(H),{}_{s}\,t]=1$.

Since $v$ takes only finitely many values in $G$, the index of $C_G(v(G))$ in $G$ is finite and so the conjugation by $t$ induces an automorphism of $v(G)$ of finite order, say $r$. By Lemma  \ref{dms_lemma2} we have $[(v(G)\cap w^q(H))^{r^{s-1}},t]=1$.  As $v(G)\cap w^q(H)$ is abelian, it follows that $[v(G)\cap w^q(H), t]$ has finite exponent dividing $r^{s-1}$. Hence $[v(G)\cap w^q(H), t]=1$, because  $v(G)$ is torsion-free.

Combining   the previous observation with the fact that the conjugation by $t^{r}$ induces the trivial automorphism on $v(G)$, we get that $(v(G)\cap w^q(H))\langle t^{r}\rangle$  is a central subgroup of finite index in $v(G)\langle t\rangle$. Then by Schur's theorem the commutator subgroup of $v(G)\langle t\rangle$ is finite. Obviously the quotient $v(G)\langle t\rangle/v(G)$ is abelian. Hence the commutator subgroup of $v(G)\langle t\rangle$ is contained in $v(G)$ and, taking into account that  $v(G)$ is torsion-free, we have $[v(G),t]=1$.  Since $t$ was chosen in $G$ arbitrarily, we now conclude that $v(G)$ is central in $G$. In particular, if  we take any $w$-value $g$ in $G$ and $t\in G$, then we have $[g^q,{}_{n+1}\,t]=1$.

In view of Lemma \ref{dms_lemma3} the subgroup $\langle g^q,t \rangle$ is nilpotent and so by Lemma \ref{dms_lemma4} we obtain that any element of the form $[g^q,{}_n\,t]$ has finite order. Since $v(G)$ is torsion-free and $[g^q,{}_n\,t]$ is an arbitrary $v$-value, we deduce that $v(G)=1$. This concludes  the proof.
\end{proof}



\begin{thebibliography}{20}


\bibitem{AS14} C. Acciarri and P. Shumyatsky, {\it On words that are concise in residually finite groups}, J. Pure Appl. Algebra {\bf 218} (2014), no. 1, 130--134.
\bibitem{Baer}R. Baer, {\it Engelsche Elemente Noetherscher Gruppen} (German), Math. Ann. {\bf 133} (1957), 256--270.
\bibitem{C} C. Casolo, {\it  Nilpotent subgroups of groups with all subgroups subnormal}, Bull. London Math. Soc. {\bf 35} (2003), no. 1, 15--22.
\bibitem{DS}E. Detomi and P. Shumyatsky, {\it On the length of a finite group and of its 2-generator subgroups}, Bull. Braz. Math. Soc. (N.S.) {\bf 47} (2016), 845--852.
\bibitem{DMS2019}E. Detomi, M. Morigi and P. Shumyatsky, {\it Words of Engel type are concise in residually finite groups}, Bull. Math. Sci. {\bf 9},  no. 2, 1950012 (2019).
\bibitem{dms-19}E. Detomi, M. Morigi, and P. Shumyatsky, {\it On bounded conciseness of Engel-like words in residually finite groups}, J. Algebra {\bf 521} (2019), 1--15. 
\bibitem{DMS2020}E. Detomi, M. Morigi and P. Shumyatsky, {\it Words of Engel type are concise in residually finite groups. Part II}, Groups Geom. Dyn. {\bf 14} (2020), no. 3, 991--1005.
\bibitem{DMS2022} E. Detomi, M. Morigi and P. Shumyatsky,  {\it On the rank of a verbal subgroup of a finite group}, J. Austral. Math. Soc. {\bf 113}(2), (2022) 145--159. 
\bibitem{GA} J.D. Dixon, M.P.F. du Sautoy, A. Mann, and D. Segal,  ``Analytic Pro-p Groups'',  Cambridge University Press, Cambridge, 1991. 
\bibitem{GS2015}R. Guralnick and P. Shumyatsky, {\it On rational and concise words}, J. Algebra {\bf 429} (2015), 213--217.
\bibitem{mhall}M. Hall Jr., ``The Theory of Groups'', The Macmillan Co., New York, 1959. 
\bibitem{hall}P. Hall, {\it Some sufficient conditions for a group to be nilpotent}, Illinois J. Math. {\bf 2} (1958), 787--801.
\bibitem{Hupp}B. Huppert, `` Endliche Gruppen. I '', Grundlehren Math. Wiss. 134, Springer, Berlin,1967.
\bibitem{ivanov}S. V. Ivanov, {\it P. Hall’s conjecture on the finiteness of verbal subgroups}, Izv. Vyssh. Uchebn. Zaved. Mat.  {\bf 6} (1989), 60--70. In Russian. English translation, Soviet Math. (Iz. VUZ ) {\bf 33} (1989), no. 6, 59--70.
\bibitem{Jones}G. A. Jones, {\it Varieties and simple groups}, J. Austral. Math. Soc. {\bf 17} (1974), 163--173.
\bibitem{KS} E.I. Khukhro and P.  Shumyatsky,  {\it Bounding the exponent of a finite group with automorphisms},   J. Algebra {\bf 212}  (1999)  363--374. 
\bibitem{lazard} M. Lazard, {\it Sur les groupes nilpotents et les anneaux de Lie}, Ann. Sci. Ecole Norm. Sup. {\bf 71} (1954) 101--190.
\bibitem{LMS} P. Longobardi, M. Maj and H. Smith,  {\it A note on locally graded groups}, Rend. Sem. Mat. Univ. Padova {\bf 94} (1995), 275--277. 
\bibitem{ols}A. Yu. Ol'shanskii, ``Geometry of defining relations in groups''. Translated by Yu. A. Bakhturin. Mathematics and its Applications (Soviet Series), 70. Kluwer Academic Publishers Group, Dordrecht, 1991.
\bibitem{rz}L. Ribes and P. Zalesskii, ``Profinite Groups'', 2nd edition, Springer Verlag, Berlin, New York, 2010.
\bibitem{Rob}  D.\,J.\,S. Robinson,  ``A course in the theory of groups'', Springer-Verlag, New York, 1996.
\bibitem{Rob2} D.J.S. Robinson, ``Finiteness Conditions and Generalized Soluble Groups'', Part 1, Springer-Verlag, 1972.
\bibitem{shum}P. Shumyatsky, {\it Applications of Lie ring methods to group theory}, In: ``Nonassociative Algebra and Its Applications'', R. Costa et al. (Eds.), Marcel Dekker, New York, 2000, 373--395.
\bibitem{shum2000} P. Shumyatsky, {\it Verbal subgroups in residually finite groups}, Quart. J. Math. {\bf 51} (2000) 523--528.
\bibitem{shum2002}P. Shumyatsky, {\it On varieties arising from the solution of the Restricted Burnside Problem}, J. Pure and Appl. Algebra, {\bf 171}:1 (2002), 67--74.
\bibitem{shu1} P. Shumyatsky, {\it  A (locally nilpotent)-by-nilpotent variety of groups}, Math. Proc. Cambridge Philos. Soc. {\bf 132} (2002), no. 2, 193--196.
\bibitem{forum} P.\,\,Shumyatsky, {\it Commutators of elements of coprime orders in finite groups}, Forum Math. {\bf 27}(2015) no. 1,  575--583.
\bibitem{STT1}P. Shumyatsky, A. Tortora and  M. Tota, {\it On varieties of groups satisfying an Engel type identity}, J. Algebra {\bf 447} (2016), 479--489.
\bibitem{STT2}P. Shumyatsky, A. Tortora and M. Tota, {\it Engel groups with an identity}, Internat. J.  Algebra Comput. {\bf 29}:1 (2019) 1--7. 
\bibitem{wz} J.S. Wilson and E. Zelmanov, {\it Identities for Lie algebras of pro-p groups}, J. Pure Appl. Algebra {\bf 81} (1992) 103--109.
\bibitem{RBP1} E. Zelmanov, {\it The solution of the restricted Burnside problem for groups of odd exponent}, Math. USSR Izv. {\bf 36} (1991) 41--60.
\bibitem{RBP2}E. Zelmanov, {\it The solution of the restricted Burnside problem for 2-groups}, Math. Sb. {\bf182} (1991) 568--592.
\bibitem{zel1} E. Zelmanov, ``Nil Rings and Periodic Groups'', The Korean Mathematical Society Lecture Notes in Mathematics, Seoul, 1992.
\bibitem{zel2}E. Zelmanov, {\it Lie algebras and torsion groups with identity}, J. Comb. Algebra {\bf 1} (2017), no. 3, 289–340.

\end{thebibliography}
\end{document}